\documentclass[a4paper,11pt,pdf]{amsart}
\usepackage{enumerate, amsmath, amsfonts, amssymb, amsthm, thmtools, wasysym, graphics, graphicx, xcolor, frcursive,comment,bbm}

\usepackage{etex}

\definecolor{darkblue}{rgb}{0.0,0,0.7} % darkblue color
 % darkblue command
\definecolor{darkred}{rgb}{0.7,0,0} % darkred colorexplici
\usepackage{hyperref}
\usepackage[all]{xy}
\usepackage[T1]{fontenc}

\usepackage{vmargin}            % redéfinir les marges
\setmarginsrb{3.7cm}{3.2cm}{3.7cm}{3.1cm}{0cm}{0.6cm}{0cm}{0cm}

\usepackage{caption,lipsum}
\captionsetup{width=0.93\textwidth}

 % emphasis of a definition

\usepackage{graphicx}                  
\usepackage{pstricks,pst-plot,pst-text,pst-tree,pst-eps,pst-fill,pst-node,pst-math}
\usepackage{setspace}

\newcommand{\darkred}{\color{darkred}} % darkred command
\newcommand{\defn}[1]{\emph{\darkred #1}} % emphasis of a 

\def\lt{\ell_{T}}

\def\leqt{{\leq_{T}}}
\def\Std{{\sf{Std}}}
\def\NC{{\sf{NC}}}
\def\Sort{{\sf{Sort}}}

\def\Cov{{\sf{Cov}}}

\def\NC{{\sf{NC}}}

\def\Red{{\mathrm{Red}}}
\def\Pol{{\sf{Pol}}}

%\phi_c}}
%\phi^*_c}}

\def\Cox{{\sf{Cox}}}

\def\Read{{\sf{Read}}}
\def\Sn{{\mathfrak{S}_{n+1}}}
\usepackage{etex}

\numberwithin{equation}{section}
\newtheorem{theorem}[equation]{Theorem}
\newtheorem{prop}[equation]{Proposition}
\newtheorem{lem}[equation]{Lemma}
\newtheorem{cor}[equation]{Corollary}

\theoremstyle{definition}
\newtheorem{definition}[equation]{Definition}
\newtheorem{rmq}[equation]{Remark}
\newtheorem{exple}[equation]{Example}

\newtheorem{question}[equation]{Question}

\title[Dual Garside structures and Coxeter sortable elements]{Dual Garside structures and Coxeter sortable elements}

\author{Thomas Gobet}
\address{Thomas Gobet, School of Mathematics and Statistics F07, University of Sydney NSW 2006, Australia.}
\email{thomas.gobet@sydney.edu.au}

\begin{document}
\maketitle

\begin{abstract}
In Artin-Tits groups attached to Coxeter groups of spherical type, we give a combinatorial formula to express the simple elements of the dual braid monoids in the classical Artin generators. Every simple dual braid is obtained by lifting an $S$-reduced expression of its image in the Coxeter group, in a way which involves Reading's $c$-sortable elements. It has as an immediate consequence that simple dual braids are Mikado braids (the known proofs of this result either require topological realizations of the Artin groups or categorification techniques), and hence that their images in the Iwahori-Hecke algebras have positivity properties. In the classical types, this requires to give an explicit description of the inverse of Reading's bijection from $c$-sortable elements to noncrossing partitions of a Coxeter element $c$, which might be of independent interest. The bijections are described in terms of the noncrossing partition models in these types. While the proof of the formula is case-by-case, it is entirely combinatorial and we develop an approach which reduces a uniform proof to uniformly proving a lemma about inversion sets of $c$-sortable elements.   
\end{abstract}
\tableofcontents

\section{Introduction}

Dual braid monoids provide a Garside structure on braid groups of finite reflection groups, and are therefore central for instance in the study of the word problem in generalized braid groups of complex reflection groups. They were introduced by Birman, Ko and Lee~\cite{BKL} in type $A_n$, Bessis, Digne and Michel~\cite{BDM} in type $B_n$, and Bessis~\cite{Dual} for the remaining finite real reflection groups as well as for complex reflection groups~\cite{Dualcplx}. They give rise to a so-called \textit{dual} approach for the study of both reflection groups and their braid groups. In this paper, we are interested in the dual braid monoids attached to finite real reflection groups.  

The dual approach consists in the study of a real reflection group viewed as a group generated by the set $T$ of all its reflections, instead of just the set $S$ of reflections corresponding to the walls of a chamber. It gives rise to a rich combinatorics involving the noncrossing partition lattices. More precisely, every Garside monoid comes equipped with a finite set of \textit{simple} elements, which may be defined as the (left) divisors of the Garside element, and form a lattice under left-divisibility in the monoid (for basics on Garside theory we refer the reader to \cite{Garside}). In the classical setting, i.e., when the generating set of the reflection group $W$ is a simple system $S$, this lattice is isomorphic to the lattice given by ordering $W$ by the (left) weak Bruhat order. In the dual setting, a choice of standard Coxeter element $c$ in $W$ is required, and the set of simples is in bijection with a subset $\NC(W,c)$ of $W$ associated with that choice of standard Coxeter element; the lattice structure on $\NC(W,c)$ coming from the lattice of simples is isomorphic to the lattice of noncrossing partitions of a Coxeter element $c$~\cite{Dual}. This gives rise to combinatorial models for the study of the simples and, more generally, of the braid groups defined by their dual presentation. We call the simples of the dual Garside structure the~\textit{simple dual braids}. 

Denote by $B(W)$ the braid group associated with the finite Coxeter group $(W,S)$. The dual braid monoid $B_c^*$ associated with a choice $c$ of Coxeter element embeds into its group of (left) fractions, which is isomorphic to $B(W)$. One of the difficulties of the dual approach is that it is hard in general to express the generators of $B_c^*$ (in bijection with $T$) or the simples (in bijection with $\NC(W,c)$) inside $B(W)$ in terms of the classical Artin generators. Indeed, generators of $B_c^*$ corresponding to simple reflections are sent to the corresponding Artin generators, but for other reflections there is no immediate description. In \cite[Proposition 3.13]{DG}, Digne and the author gave a formula to express the dual generators in the classical ones, but it does not give a braid word of smallest possible length in general. The aim of this paper is to give a combinatorial formula to express all the simples of $B_c^*$, not only the reflections, in the classical generators of $B(W)$ (see Theorem~\ref{thm:main} below). The formula for the simple $x_c$ associated to $x\in \NC(W,c)$ is given by lifting any $S$-reduced expression of $x$, where each generator $s$ in the reduced expression is replaced either by the corresponding Artin generator of $B(W)$ or its inverse, following a rule involving $c$-sortable elements. As an immediate consequence, the length of $x_c$ in $B(W)$ is equal to the length of $x$ in $W$ with respect to $S$, in particular the braid word which we obtain for $x_c$ is of smallest possible length (in fact, this shows that the length of $x_c$ in $B(W)$ is equal to the length of $x$ in $W$). 

The formula we give here is uniform and involves Reading's $c$-sortable elements (see~\cite{Reading}). To the best of our knowledge, this is the first application of these objects to the study of braid groups. There are several motivations for giving a formula to express the simple dual braids in the classical generating set: as we explain below, the formula has as an immediate consequence that simple dual braids are \textit{Mikado braids}. This result was conjectured in a joint work with Digne~\cite{DG}, where it was proven for every irreducible $W$ except in type $D_n$. Type $D_n$ was proven in a joint work with Baumeister~\cite{BG} and independently, Licata and Queffelec~\cite{LQ} proved it in types $A_n$, $D_n$ and $E_n$. The proofs there use either topological realizations of the braid groups (i.e., in terms of Artin braids) or categorification techniques. There are indeed indications that Mikado braids play an important role in the categorifications of the braid groups (see~\cite{Twisted}, \cite{LQ}). Another motivation consists of searching for a construction of the dual braid monoids which would avoid using the classification. In this perspective, a uniform proof of Lemma~\ref{lem:casparcasbis} of this paper would be of interest since the formula which we give is case-free, but its proof relies on Lemma~\ref{lem:casparcasbis} whose proof uses the classification. Finally, there are computational motivations: the formula which we give allows one to compute a simple expressed in the classical Artin generators much quicker than with the known approaches (see Section~\ref{dual} below).  

Reading defined and studied $c$-sortable elements in the framework of what is nowadays called the \textit{Coxeter-Catalan combinatorics}. Coxeter sortable elements form a combinatorially defined subset of $W$ (attached to a choice of standard Coxeter element $c$), which is in bijection with the set $\NC(W,c)$ of noncrossing partitions of $c$ and with $c$-clusters; they were indeed introduced to construct a bijection between the set $\NC(W,c)$ and the set of $c$-clusters. Their definition allows a natural and powerful way of arguing by induction, sometimes called \textit{Cambrian recurrence} (see Lemmatas~\ref{lem:read1} and \ref{lem:read2} below). Such a procedure allows one to prove results concerning noncrossing partitions for all standard Coxeter elements simultaneously, which is difficult to perform if we stay in the world of noncrossing partitions. This strategy is used to prove the main result (Theorem~\ref{thm:main} below). Let $\Read$ denote Reading's bijection between $c$-sortable elements and $c$-noncrossing partitions. 

Mikado braids were defined (in spherical type) in \cite{DG} but they already appeared before in the work of Dyer~\cite{Dyernil} and Dehornoy~\cite{D}. In spherical type, a Mikado braid can be defined as a quotient of two simple elements of the classical Garside structure, that is, two positive lifts of the elements of $W$ in $B(W)$. The optimal definition is the one appearing in work of Dyer~\cite[Section 9.1]{Dyernil}, which associates a Mikado braid to two elements $x,y$ of $W$ (at least if $W$ is finite): a Mikado braid $x_{N(y)}$ is obtained by lifting any $S$-reduced expression $s_1 s_2 \cdots s_k$ of $x$ to the element $$\mathbf{s}_1^{\varepsilon_1} \mathbf{s}_2^{\varepsilon_2}\cdots \mathbf{s}_k^{\varepsilon_k}\in B(W),$$ where $\mathbf{s}_i$ is the classical Artin generator of $B(W)$ associated with $s_i$ and $\varepsilon_i\in\{ \pm 1 \}$; the rule to decide whether $\varepsilon_i=1$ or $-1$ involves the inversion set $N(y)$ of the element $y$ (see Section~\ref{mikado} for a precise definition). The element $y$ is not unique in general, that is, we may have $x_{N(y)}=x_{N(y')}$ for $y\neq y'\in W$. For infinite groups one replaces inversion sets $N(y)$ by biclosed sets of positive roots (which turn out to be exactly inversion sets in the case where $W$ is finite). Also note that by definition, Mikado braids have the same length in the braid groups (with respect to the generating set given by the Artin generators and their inverses) as their images in the Coxeter group (with respect to $S$). 

As explained above, simple dual braids were conjectured and then proven to be Mikado braids, but in an indirect way: if $x\in \NC(W,c)$ and $x_c\in B_c^*\subseteq B(W)$ denotes the corresponding simple dual braid, then we have $x_c=x_{N(y)}$ for some $y\in W$ there is no hint of what "the" (or "a such") $y$ could be (but rather algorithms to find a possible $y$, which rely on the topological models for the Artin groups in the classical types; see for instance~\cite[Proposition 5.7]{DG}). Our formula provides an answer to that question: 

\newtheorem*{thm:main}{Theorem \ref{thm:main}}
\begin{thm:main}[Expressing simple dual braids in the classical generators]
Let $c$ be a standard Coxeter element. Let $x\in \NC(W,c)$, let $y:=x^{-1} c$ (which also lies in $\NC(W,c)$). Then $$x_c= x_{\Read^{-1}(y)}.$$
\end{thm:main}    

The bijection $\Read$ is defined uniformly, hence the statement of the Theorem does not require the classification of finite Coxeter groups. Unfortunately, it seems that there is no explicit uniform description of its inverse $\Read^{-1}$ in the literature. In Section~\ref{explicit} below, we provide a combinatorial description of $\Read^{-1}$ in the classical types in terms of noncrossing partitions. These descriptions, which might be of independent interest, are required to prove the following combinatorial Lemma about $c$-sortable elements. The proof of it is the only one where we need to use the classification:

\newtheorem*{lem:casparcasbis}{Lemma \ref{lem:casparcasbis}}
\begin{lem:casparcasbis}
Let $c$ be a standard Coxeter element. Let $x\in \NC(W,c)$, $y:=x^{-1}c$. Let $s$ be initial in $c$, that is, $\ell(sc) < \ell(c)$. Then $$x^{-1} sx\text{ lies in }N(y)\text{ if and only if it lies in }N(\Read^{-1}(y)).$$ 
\end{lem:casparcasbis}

It would be interesting to extend these results to the affine Coxeter groups of type $\widetilde{A}_n$ and $\widetilde{C}_n$ for the choices of Coxeter element for which there is a dual braid monoid~(see~\cite{Dig},\cite{Dig1}). In these cases it is not known whether simple dual braids are Mikado braids.

Mikado braids have interesting homological properties in the categorical incarnations of the braid groups (see~\cite{Twisted}, \cite{LQ}). For instance, the fact that they lie in the heart of the canonical $t$-structure on the bounded homotopy category of Soergel bimodules can be used to derive positivity properties of their images in the Iwahori-Hecke algebra of the Coxeter system (see Section~\ref{mik} below for more details). Combining this fact with the fact that simple dual braids are Mikado braids, we obtain that the images of simple dual braids in the Hecke algebra have positivity properties. In type $A_n$, these images yield a basis of the Temperley-Lieb quotient of the Hecke algebra (see~\cite{Gob_jktr}).  
 \\
~\\
{\bf{Acknowledgements}}. The author thanks Anthony Henderson, Christophe Hohlweg, Ivan Marin, Nathan Reading, Christian Stump and Nathan Williams for useful discussions. He thanks Fran\c{c}ois Digne and Jean Michel for providing computer programs to check Lemma~\ref{lem:casparcasbis} in the exceptional types. This paper was written in part when the author was funded by the ANR Geolie (project ANR-15-CE40-0012) at the Universit\'e de Lorraine in Nancy, and finalized while being supported by an Australian Research Council (ARC) grant (grant number DP170101579) at the University of Sydney. He thanks both organizations for their financial support.  

\section{Coxeter groups and Coxeter elements}

\subsection{Coxeter groups and Artin groups}

A \defn{Coxeter system} $(W,S)$ is a group $W$ generated by a finite set $S$ of elements satisfying $s^2=e$ for every $s\in S$, subject to additional \defn{braid relations}: given $s,t\in S$, $s\neq t$, the braid relation of length $m_{s,t}=m_{t,s}\in\{2,3,\dots\}\cup\{\infty\}$ between $s$ and $t$ is the relation $st\cdots = ts\cdots$, where $st\cdots$ (resp; $ts\cdots$) is the strictly alternating product of $s$ and $t$ with $m_{s,t}$ factors and beginning by $s$ (resp. by $t$). If $m_{s,t}=\infty$ then no relation between $s$ and $t$ is imposed. We denote by $\ell:W\longrightarrow\mathbb{Z}_{\geq 0}$ the length function with respect to the generating set $S$ of $W$ and by $\leq$ the Bruhat order on $W$. The set $T=\bigcup_{w\in W} wSw^{-1}$ is the set of \defn{reflections} in $W$. A Coxeter group $(W,S)$ is \defn{reducible} if there is a nontrivial partition $S= S_1 \overset{\cdot}{\cup} S_2$ such that $W= \langle S_1\rangle \times \langle S_2 \rangle$ (it is a general fact that every subgroup $W_I$ generated by a subset $I\subseteq S$, or even every subgroup $W'\subseteq W$ generated by a set of reflections, is again a Coxeter group; see~\cite{Dyer_sub}). Otherwise it is \defn{irreducible}. Every Coxeter group is isomorphic to a finite direct product of irreducible ones, in an essentially unique way. For more on Coxeter groups we refer the reader to~\cite{Bou} or~\cite{Humph}.   

Finite irreducible Coxeter groups are classified in four infinite families ($A_n$, $n\geq 3$), ($B_n$, $n\geq 3$), ($D_n$, $n\geq 4$), ($I_2(m)$, $m\geq 5$) and six exceptional groups $E_6$, $E_7$, $E_8$, $F_4$, $H_3$, $H_4$. 

Given a (not necessarily finite) Coxeter system $(W,S)$, on defines the \defn{Artin-Tits group} $B(W)$ attached to $(W,S)$ as the group generated by a copy $\mathbf{S}=\{\mathbf{s}~|~s\in S\}$ of $S$, whose elements are only subject to the braid relations of $W$. That is, the only relations are $$\underbrace{\mathbf{s} \mathbf{t}\cdots}_{m_{s,t}~\text{factors}}=\underbrace{\mathbf{t} \mathbf{s}\cdots}_{m_{s,t}~\text{factors}},$$ for all $s,t\in S$. As this presentation is positive, let $B(W)^+$ denote the monoid generated by the same presentation as $B(W)$. It is a general fact that $B(W)^+\subseteq B(W)$ (see~\cite{Paris}, \cite{Jensen}). If $W$ is finite, the group $B(W)$ (or $W$) is said to be of spherical type. 

When $W$ is finite, $B(W)^+$ is a \defn{Garside monoid} (see~\cite{Garside}), implying that $B(W)$ can be realized as the group of (left) fractions of $B(W)^+$. Garside structures are central in the study of the word problem in braid groups. They provide normal forms for the elements of $B(W)^+$, which can be used to define (and compute) normal forms for the elements of $B(W)$. Garside structures are not unique in general; in Section~\ref{dual} below, we recall the definition of the dual braid monoids, which provide alternative Garside structures on $B(W)$. We do not introduce all the machinery of Garside monoids here since our purposes can be achieved without it, but rather refer the reader to~\cite{Garside} for more on the topic.  

For $x\in W$, define the \defn{positive lift} $\mathbf{x}\in B(W)$ of $x$ as follows: choose a reduced expression $s_1 s_2\cdots s_k$ of $x$ and set $\mathbf{x}:=\mathbf{s}_1 \mathbf{s}_2\cdots \mathbf{s}_k$. Since any two reduced expressions of $x$ can be related by a sequence of braid relations, the element $\mathbf{x}$ is well-defined. If $W$ is finite, in the Garside-theoretic language, the set $\{\mathbf{x}~|~x\in W\}$ is the set of \defn{simple elements} or \defn{simples} of the classical braid monoid $B(W)^+$, that is, the set of divisors of the Garside element $\Delta$ (which is equal to $\mathbf{w_0}$, where $w_0$ is the longest element of $W$). 

\subsection{Coxeter elements and absolute order}

In this section, we recall the definition and properties of Coxeter elements, absolute order and reflection subgroups. The first definition below is borrowed from~\cite{BDSW}:

\begin{definition}[Coxeter elements]
Let $(W,S)$ be a Coxeter system. Let $T=\bigcup_{w\in W} w S w^{-1}$ denote the set of reflections in $W$.
\begin{enumerate}
\item An element $c\in W$ is a \defn{Coxeter element} if there exists $$S'=\{s_1, \dots, s_n\}\subseteq T$$ such that $c= s_1 s_2\cdots s_n$ and $S'$ is a simple system for $W$ (i.e., $(W,S')$ is a Coxeter system). We denote by $\Cox(W,S)$ the set of Coxeter elements in $W$.
\item An element $c\in W$ is a \defn{standard Coxeter element} if it is the product of all the elements of $S$ in some order. We denote by $\Std(W,S)$ the set of standard Coxeter elements in $W$. It is clear that $\Std(W,S)\subseteq\Cox(W,S)$. 
\end{enumerate}
\end{definition}

\begin{rmq}
The first definition above might look independent of $S$, but it does not just depend on $W$ as abstract group: one requires to have fixed the set of reflections $T$ for $W$, and since the set $T$ cannot be obtained just from the abstract group $W$ (that is, there are Coxeter groups $W, W'$ which are isomorphic as abstract groups, but not as Coxeter groups; see for instance~\cite[Section 2.3]{BGRW} and the references therein), the easiest way to do it is to fix a simple system $S$ for $W$ and define $T$ as the set of conjugates of the elements of $S$ as we did above. Note that if $S'\subseteq T$ is a simple system for $W$, then $\Cox(W, S)=\Cox(W, S')$ (as an immediate consequence of~\cite[Lemma 3.7]{Mul}).
\end{rmq}

Every Coxeter element $c=s_1 s_2\cdots s_n$ has reflection length equal to $n=|S|$; this can be seen for instance using the main result of~\cite{Dyer_refn}.

\begin{definition}[Absolute order]
The \defn{absolute order} $\leq_T$ on $W$ is defined by $u\leq_T v$ if and only if $$\ell_T(u)+\ell_T(u^{-1}v)=\ell_T(v),$$ where $\ell_T: W\longrightarrow \mathbb{Z}_{\geq 0}$ is the length function with respect to the generating set $T$ of $W$ and $u,v\in W$. We denote by $\Red_T(u)$ the set of $T$-reduced expressions of $u$. 
\end{definition}

\begin{definition}[Parabolic Coxeter elements]\label{def:par}
Let $(W,S)$ be a Coxeter system. Let $T=\bigcup_{w\in W} w S w^{-1}$ be the set of reflections of $W$.
\begin{enumerate}
\item An element $w\in W$ is a \defn{parabolic Coxeter element} if it exists a simple system $S'=\{ s_1, s_2,\dots, s_n\}\subseteq T$ for $W$ and $m\leq n$ such that $w= s_1 s_2\cdots s_m$. 
\item An element $w\in W$ is a \defn{prefix} of a Coxeter element $c\in\Cox(W,S)$ if $w\leq_T c$. 
\end{enumerate}
\end{definition}

\begin{rmq}
A parabolic Coxeter element is always a prefix of a Coxeter element since in the notations of $(1)$ above, we have $s_1 s_2\cdots s_m \leqt s_1 s_2\cdots s_n$. However the converse is false in general (see for instance~\cite[Example 5.7]{HK}), but it holds if $W$ is finite (see~\cite[Corollary 3.6]{DG}).  
\end{rmq}

\subsection{Parabolic and reflection subgroups}

\begin{definition}[Parabolic subgroup]
A reflection subgroup $W'\subseteq W$ is \defn{parabolic} if $W'=\langle s_1, s_2, \dots, s_m \rangle$, where $s_1, \dots, s_m$ is as in Definition~\ref{def:par} (1). That is, if there is a set of reflections generating $W'$ which can be extended to a simple system $S'$ for $W$. There is a more classical definition as follows: let $I\subseteq S$. Then $W_I:=\langle s~|~ s\in I\rangle$ is a \defn{standard parabolic subgroup}, and any conjugate of such a group is a parabolic subgroup. The equivalence of the two definitions is false in general (see~\cite[Example 2.2]{Gob_cycle} for a counterexample), but it holds for finite groups (and for a large family of infinite Coxeter groups called $2$-spherical; see~\cite[Section 4]{BGRW}). 
\end{definition}

As mentioned above, every reflection subgroup $W'\subseteq W$ of a Coxeter system $(W,S)$ is again a Coxeter group (see~\cite{Dyer_sub}). It comes equipped with a canonical set of generators as a Coxeter group, which is defined as follows. Given $y\in W$, define $$N(y):=\{t\in T~|~\ell(ty) < \ell(y)\}.$$
The elements of $N(y)$ are the \defn{(left) inversions} of $y\in W$ and we have $|N(y)|=\ell(y)$. Given a reflection subgroup $W'\subseteq W$, its canonical set of Coxeter generators is given by (see~\cite[Theorem 3.3]{Dyer_sub}) $$S':=\{ t\in W'\cap T~|~N(t)\cap W'=\{t\}\}.$$
Morevoer, the reflections of $W'$ (viewed as a Coxeter group) coincide with $W'\cap T$. 

Assume that $(W,S)$ is finite. Let $V$ be the geometric representation of $(W,S)$. Recall that it is a faithful representation $V$ of $W$, preserving a symmetric bilinear form $(\cdot, \cdot)$, in such a way that the elements of $T$ act on $V$ by geometric reflections (see~\cite[Section 5.3]{Humph}). To every $t\in T$ corresponds a \defn{root} $\alpha_t\in V$. Given $x\in W$ we denote by $V^x$ the set of fixed points of $V$ by $x$. There is the following well-known characterization of parabolic subgroups:

\begin{prop}\label{prop_parab}
Let $(W,S)$ be a finite Coxeter system. A subgroup $W'\subseteq W$ is parabolic if and only if there is a subset $E\subseteq V$ such that $$W'=C_W(E):=\{ w\in W~|~w(v)=v~\forall v\in E\}.$$
\end{prop}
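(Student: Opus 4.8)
The plan is to prove both implications by reducing to the structure of fixed-point spaces of reflections and using the well-known theory of parabolic subgroups in finite Coxeter groups.

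For the forward implication, suppose $W'$ is parabolic. By definition there is a simple system $S'=\{s_1,\dots,s_n\}$ for $W$ and $m\leq n$ such that $W'=\langle s_1,\dots,s_m\rangle$. Since $\Cox(W,S)=\Cox(W,S')$ and, more importantly, the geometric representation can be built relative to $S'$ as well, it suffices to treat the case where $W'=W_I=\langle s~|~s\in I\rangle$ is a standard parabolic subgroup for some $I\subseteq S$ (the general case follows by conjugating $E$ by the relevant element of $W$, since $C_W(w(E))=wC_W(E)w^{-1}$). For a standard parabolic subgroup, set $E:=\bigcap_{s\in I}V^{s}$. One inclusion is immediate: every $w\in W_I$ is a product of the $s\in I$, each of which fixes $E$ pointwise, so $W_I\subseteq C_W(E)$. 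The reverse inclusion $C_W(E)\subseteq W_I$ is the classical fact that the pointwise stabilizer in $W$ of the intersection of the reflecting hyperplanes of $I$ is exactly $W_I$; this can be quoted from \cite[Section 5.13]{Humph} (Steinberg's fixed-point theorem), or proved directly by induction on $\ell(w)$ using that if $w\neq e$ fixes $E$ pointwise then $w$ has an inversion $t$ with $\alpha_t\perp E$, forcing $t\in W_I$ by the geometry of the arrangement, and then $tw\in C_W(E)$ has shorter length.

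For the backward implication, suppose $W'=C_W(E)$ for some $E\subseteq V$; we may replace $E$ by the subspace it spans, and then by a generic vector $v$ inside that subspace so that $C_W(E)=C_W(v)=\{w\in W~|~w(v)=v\}$ — here I use finiteness, which guarantees the arrangement of reflecting hyperplanes is finite, so a suitably generic $v$ in $\mathrm{span}(E)$ lies on exactly the hyperplanes containing $\mathrm{span}(E)$. Now $C_W(v)$ is generated by the reflections $t\in T$ whose hyperplane contains $v$, by Steinberg's theorem again; equivalently it is the stabilizer of a point, hence a standard result (see \cite[Section 5.13]{Humph} or \cite{Dyer_sub}) gives that this is a parabolic subgroup. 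Concretely, after moving $v$ into the closure of the fundamental chamber by an element of $W$ (which conjugates $C_W(v)$), the stabilizer of $v$ is the standard parabolic $W_I$ with $I=\{s\in S~|~s(v)=v\}$, so $C_W(E)$ is parabolic.

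The main obstacle is the reverse inclusion $C_W(E)\subseteq W'$ in each direction, i.e., showing that the pointwise stabilizer of a subspace (or a point) is generated by the reflections it contains and is conjugate into a standard parabolic. This is precisely Steinberg's fixed-point theorem for finite reflection groups; in a self-contained write-up I would either cite it directly or include the short induction on length sketched above, the key geometric input being that a nontrivial element fixing a point $v$ must have an inversion hyperplane through $v$, which in turn uses that $W$ is finite so that $v$ can be assumed to lie in the closure of a chamber and the chamber geometry applies.
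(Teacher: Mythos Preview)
Your proof is correct and follows the standard approach via Steinberg's fixed-point theorem. Note, however, that the paper does not actually prove this proposition: it is stated as a ``well-known characterization of parabolic subgroups'' and left without proof, with only the additional remark that ``the subsets $E$ can be chosen to be intersections of reflecting hyperplanes.'' Your argument supplies precisely the standard justification the paper is implicitly relying on, and your forward direction (taking $E=\bigcap_{s\in I}V^s$) makes that remark explicit.
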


The subsets $E$ can be chosen to be intersections of reflecting hyperplanes. 

\begin{rmq}\label{rmq_centralizer} 
Note that in particular, if $W$ is finite, then $P(x)=C_W(V^x)$ for all $x\in W$. 
\end{rmq}

\begin{definition}[Parabolic closure]
Let $(W,S)$ be a finite Coxeter system. Let $x\in W$. The \defn{parabolic closure} $P(x)$ of $x$ is the smallest parabolic subgroup of $W$ containing $x$. This is well-defined since parabolic subgroups of finite Coxeter groups are known to be stable by intersection (for infinite Coxeter groups, this is true with the classical definition, but open with the definition of parabolic subgroups which we chose to use in this paper). 
\end{definition}

\subsection{Noncrossing partitions}

From now on, we assume $(W,S)$ to be finite. 

\begin{definition}[Noncrossing partitions]
For $c\in\Cox(W,S)$, we set $$\NC(W,c):=\{x\in W~|~x\leq_T c\}$$ and call the elements of this set the \defn{$c$-noncrossing partitions}.
\end{definition}

The reason for calling these elements \textit{noncrossing partitions} comes from the fact that in type $A_n$, the poset $(\NC(W,c),\leq_T)$ is isomorphic to the lattice of noncrossing partitions of the cycle $c$ ordered by refinement (see~\cite{Biane} or Section~\ref{graph_a} below). In general we have: 

\begin{theorem}[Bessis~\cite{Dual}, Brady and Watt~\cite{BW}, Reading~\cite{Reading2}]
The partially ordered set $(\NC(W, c), \leq_T)$ is a lattice. 
\end{theorem}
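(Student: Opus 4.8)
The plan is to realise $(\NC(W,c),\leq_T)$ inside the intersection lattice of the reflection arrangement, following the geometric approach of Brady and Watt. The starting observation is a general poset fact: since $\NC(W,c)$ is finite with least element $e$ and greatest element $c$, it is a lattice as soon as every pair of elements admits a join, because then the meet of $x,y$ can be recovered as the join of the (finite, nonempty) set of their common lower bounds. So I would reduce the statement to proving that $\NC(W,c)$ is a join-semilattice. Moreover the Kreweras complement $x\mapsto x^{-1}c$, which is an order-reversing bijection of $\NC(W,c)$ (recall from the introduction that $x^{-1}c$ again lies in $\NC(W,c)$), interchanges meets and joins, so being a join-semilattice and being a meet-semilattice are equivalent here; it is thus enough to treat whichever of the two is more convenient geometrically.

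Next I would introduce the geometry. Using the geometric representation $V$, attach to each $w\in W$ its fixed space $V^w$ and its moved space $\mathrm{Mov}(w):=\mathrm{im}(1-w)=(V^w)^\perp$, so that $\ell_T(w)=\dim\mathrm{Mov}(w)$ and, more generally, $\ell_T(uv)\leq\ell_T(u)+\ell_T(v)$ with equality precisely when $\mathrm{Mov}(u)\cap\mathrm{Mov}(v)=0$. From this, $u\leq_T v \Rightarrow \mathrm{Mov}(u)\subseteq\mathrm{Mov}(v)$, equivalently $V^v\subseteq V^u$, is immediate. The first substantial step is to upgrade this to an order embedding on the interval: for $x,z\in\NC(W,c)$ one wants $x\leq_T z \iff V^z\subseteq V^x$, together with injectivity of $x\mapsto V^x$. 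Injectivity is exactly where the hypothesis $x\leq_T c$ is used: since $W$ is finite, each $x\in\NC(W,c)$ is a parabolic Coxeter element (the converse to the remark following Definition~\ref{def:par} holds in the finite case), hence a Coxeter element of the parabolic subgroup $P(x)=C_W(V^x)$ by Remark~\ref{rmq_centralizer}; thus $V^x$ recovers $P(x)$, and one must still show that a parabolic subgroup hosts at most one Coxeter element lying below $c$. The reverse implication $V^z\subseteq V^x\Rightarrow x\leq_T z$ fails in the ambient orthogonal group in general and is the genuinely nontrivial point, where the combinatorics of the fixed Coxeter element $c$ enters.

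With the order embedding established, I would identify joins with the intersection-lattice structure. Each $V^x$ is a flat (an intersection of reflecting hyperplanes), and the intersection $V^x\cap V^y$ of two flats is again a flat. An upper bound $z$ of $x$ and $y$ satisfies $V^z\subseteq V^x\cap V^y$, so the join ought to be the element $z_0\in\NC(W,c)$ with $V^{z_0}=V^x\cap V^y$: by the order correspondence, such a $z_0$ would simultaneously be an upper bound of $x,y$ and lie below every upper bound, hence be the least upper bound. The remaining, and I expect the main, obstacle is the realisation step: one must show that the flat $V^x\cap V^y$ is actually the fixed space of some element $z_0$ of $\NC(W,c)$, not merely of the parabolic subgroup $C_W(V^x\cap V^y)$. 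Equivalently, one must produce a Coxeter element of that parabolic subgroup lying below $c$. This is precisely the content that Brady and Watt extract from the structure of the orthogonal group, and it relies essentially on the finiteness of $W$ together with the stability of parabolic subgroups under intersection (used already to define $P(x)$).

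An alternative route, arguably more in the spirit of the present paper, is to transport the problem across Reading's bijection $\Read$ to the set of $c$-sortable elements and to invoke the lattice property of the Cambrian lattice. This is attractive because the Cambrian recurrence (Lemmata~\ref{lem:read1} and~\ref{lem:read2}) provides a clean inductive handle, but it displaces the difficulty onto proving that $\Read$ is an isomorphism of posets for the relevant orders, which is itself a substantial theorem. For a self-contained argument I would therefore favour the geometric route above, with the realisation of $V^x\cap V^y$ by an honest element of $\NC(W,c)$ as its technical heart.
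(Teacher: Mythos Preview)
The paper does not prove this theorem: it is stated with attributions to Bessis, Brady--Watt, and Reading and then used as a black box. So there is no ``paper's own proof'' to compare against; your task was to supply one from scratch.

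Your outline follows the Brady--Watt geometric route and correctly isolates the two substantive steps: (i) the map $x\mapsto V^x$ restricted to $\NC(W,c)$ is an order-\emph{isomorphism} onto its image (not merely order-preserving, which is Theorem~\ref{carter}(3)), and (ii) the image is closed under intersection of flats, i.e.\ for $x,y\in\NC(W,c)$ the flat $V^x\cap V^y$ equals $V^{z_0}$ for some $z_0\in\NC(W,c)$. You also correctly note that the general lattice reduction (finite bounded poset with all joins is a lattice) and the Kreweras involution make either semilattice property suffice. All of this is sound.

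However, your proposal is a plan rather than a proof: you name the hard steps but do not carry them out. Step (i) requires the Brady--Watt theorem that on the interval $[e,c]$ the map $w\mapsto \mathrm{Mov}(w)$ is a poset isomorphism onto its image in the subspace lattice of $\mathrm{Mov}(c)=V$; this is their main result in \cite{BW2} and is not a formality---it uses that $c$ has no eigenvalue $1$ together with an explicit construction, for each subspace $U\subseteq V$, of the unique $w\leq_T c$ with $\mathrm{Mov}(w)=U$. Step (ii) then follows from (i), since the image is \emph{all} subspaces of $V$, not merely some flats, so closure under intersection is automatic. Your discussion of (ii) (``one must produce a Coxeter element of that parabolic subgroup lying below $c$'') is on the right track but understates what Brady--Watt actually prove: they realise \emph{every} subspace, which is stronger and is what makes the argument go through without a separate case analysis. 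If you want a self-contained write-up, the missing ingredient is precisely the construction in \cite{BW2}; citing it (as the paper does) is the honest alternative.
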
 

If $W$ is finite, then $(\NC(W,c),\leq_T)\cong (\NC(W,c'),\leq_T)$ for all $c,c'\in\Cox(W,S)$ (if $c$ and $c'$ are conjugate this is clear, but with the definition of Coxeter elements which we adopted for this paper two Coxeter elements might not be conjugate. One nevertheless still has the isomorphism of posets of noncrossing partitions, see~\cite{RRS}). 

The following result is due to Carter~\cite{Carter}:

\begin{theorem}[Carter~\cite{Carter}]\label{carter}
Let $u,v\in W$. Let $u=t_1 t_2\cdots t_k$ be an expression of $u$ in the generating set $T$ of $W$.  
\begin{enumerate}
\item The expression $t_1 t_2 \cdots t_k$ is $T$-reduced (i.e., $k=\ell_T(u)$) if and only if the roots $\alpha_{t_1}, \alpha_{t_2}, \dots, \alpha_{t_k}\in V$ are linearly independent. In this case, the reflection subgroup $\langle t_1, t_2, \dots, t_k \rangle$ of $W$ has rank $k$, 
\item We have $\ell_T(u)=\dim V - \dim V^u=\mathrm{codim}(V^u)$,
\item We have $u\leq_T v \Rightarrow V^v\subseteq V^u$,
\item If $t\in T$, then $V^v\subseteq V^t\Leftrightarrow t\leq_T v$.
\end{enumerate}
\end{theorem}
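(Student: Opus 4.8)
The plan is to work in the geometric representation $V$, whose $W$-invariant symmetric bilinear form $(\cdot,\cdot)$ is positive definite (as $W$ is finite), and to read the absolute length off from the \emph{moved space} $\operatorname{Mov}(w):=\operatorname{im}(w-\mathrm{id})$ of an element $w\in W$. Since $W$ acts by isometries, $V=V^w\oplus\operatorname{Mov}(w)$ is an orthogonal direct sum, so $V^w=\operatorname{Mov}(w)^{\perp}$ and $\dim\operatorname{Mov}(w)=\mathrm{codim}(V^w)$. The starting point is a telescoping inclusion: for \emph{any} factorization $u=t_1t_2\cdots t_k$ into reflections, writing $t_1\cdots t_k-\mathrm{id}=(t_1-\mathrm{id})\,t_2\cdots t_k+(t_2\cdots t_k-\mathrm{id})$ and using $\operatorname{im}(t_i-\mathrm{id})=\mathbb{R}\alpha_{t_i}$, an induction on $k$ gives $\operatorname{Mov}(u)\subseteq\mathbb{R}\alpha_{t_1}+\cdots+\mathbb{R}\alpha_{t_k}$. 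Taking $k=\ell_T(u)$ already yields $\mathrm{codim}(V^u)=\dim\operatorname{Mov}(u)\le\ell_T(u)$, which is one half of (2); together with the reverse inequality (below) it also gives the implication ``$T$-reduced $\Rightarrow$ linearly independent'' of (1), since for a $T$-reduced expression one gets $k=\ell_T(u)=\dim\operatorname{Mov}(u)\le\dim(\sum_i\mathbb{R}\alpha_{t_i})\le k$.

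For the reverse inequality $\ell_T(u)\le\mathrm{codim}(V^u)$ I would induct on $d:=\mathrm{codim}(V^u)$, the case $d=0$ being $u=e$. When $d\ge 1$ it suffices to produce a reflection $t\in T$ with $\mathrm{codim}(V^{tu})=d-1$, since then $\ell_T(u)\le 1+\ell_T(tu)\le 1+(d-1)=d$ by induction. To produce such a $t$ \emph{inside $W$} I would pass to the parabolic closure $W':=P(u)=C_W(V^u)$ (Remark~\ref{rmq_centralizer}): this is a finite reflection group which fixes $V^u$ pointwise and preserves $\operatorname{Mov}(u)=(V^u)^{\perp}$ (which is the span of the roots of $W'$), and on which $u$ acts with no nonzero fixed vector. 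For any reflection $t$ of $W'$ one has $\alpha_t\in\operatorname{Mov}(u)$; setting $A:=(u-\mathrm{id})|_{\operatorname{Mov}(u)}$ (invertible since $u$ is elliptic there), the matrix determinant lemma gives $\det\bigl((u-t)|_{\operatorname{Mov}(u)}\bigr)=\det(A)\,(\alpha_t,C\alpha_t)/(\alpha_t,\alpha_t)$, where $C:=(u-\mathrm{id})^{-1}(u+\mathrm{id})$ is the Cayley transform of $u|_{\operatorname{Mov}(u)}$. A short computation using $u^{\ast}=u^{-1}$ shows that $C$ is skew-symmetric, hence $(\alpha_t,C\alpha_t)=0$ and $(u-t)|_{\operatorname{Mov}(u)}$ is singular; a nonzero vector $x$ in its kernel satisfies $u(x)=t(x)$, so $(tu)(x)=x$, giving $x\in V^{tu}\setminus V^u$ (note $x\in\operatorname{Mov}(u)$), while $V^u\subseteq V^{tu}$ because $t$ and $u$ each fix $V^u$ pointwise. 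Hence $\mathrm{codim}(V^{tu})<d$, and the telescoping inclusion applied to $u=t\cdot(tu)$ forces $\mathrm{codim}(V^{tu})=d-1$. This proves (2).

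With (2) available, the remaining points are short. For ``linearly independent $\Rightarrow$ $T$-reduced'' in (1): a direct induction on $k$ — tracking how $\operatorname{Mov}(\cdot)$ changes under left multiplication by a reflection whose root is not in the span of the previous ones, and using the orthogonal splitting $V=V^w\oplus\operatorname{Mov}(w)$ to rule out spurious fixed vectors — shows $\dim\operatorname{Mov}(t_1\cdots t_k)=k$ when the $\alpha_{t_i}$ are independent, so $\ell_T(t_1\cdots t_k)=k$ by (2) and $\operatorname{Mov}(t_1\cdots t_k)=\sum_i\mathbb{R}\alpha_{t_i}$; the rank assertion then follows since $\langle t_1,\dots,t_k\rangle$ acts trivially on the orthogonal complement of the $k$-dimensional space $\sum_i\mathbb{R}\alpha_{t_i}$ while containing the $k$ reflections $t_i$ with independent roots, hence has rank $k$. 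For (3): if $u\le_T v$, concatenating $T$-reduced expressions of $u$ and $u^{-1}v$ gives a $T$-reduced expression of $v$, whose roots span $\operatorname{Mov}(v)$ by (1), with the subset of roots from $u$ spanning $\operatorname{Mov}(u)$; thus $\operatorname{Mov}(u)\subseteq\operatorname{Mov}(v)$ and, passing to orthogonal complements, $V^v\subseteq V^u$. For (4): ``$\Leftarrow$'' is an instance of (3); for ``$\Rightarrow$'', $V^v\subseteq V^t=\alpha_t^{\perp}$ says $\alpha_t\in\operatorname{Mov}(v)$ and $t\in C_W(V^v)=P(v)$, so the codimension-dropping step above applied to $v$ and this $t$ gives $\mathrm{codim}(V^{tv})=\mathrm{codim}(V^v)-1$, i.e.\ $\ell_T(tv)=\ell_T(v)-1$, i.e.\ $t\le_T v$.

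The crux is the codimension-dropping step in the second paragraph. A classical Scherk-type argument easily produces a codimension-reducing \emph{orthogonal} reflection of $V$ (reflect in $u(x)-x$ for any $x\notin V^u$, which is anisotropic as the form is positive definite), but Theorem~\ref{carter} needs the reflection to lie in $W$, and this is exactly what the Cayley-transform computation inside $P(u)$ secures: it shows that \emph{every} reflection of $P(u)$ reduces $\mathrm{codim}(V^u)$ whenever $u$ is elliptic in $P(u)$. One could instead invoke Carter's original analysis via admissible diagrams, but the moved-space and Cayley-transform route is uniform and entirely self-contained.
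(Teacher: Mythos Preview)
The paper does not prove Theorem~\ref{carter}; it is stated with a citation to Carter~\cite{Carter} and used as a black box. So there is no ``paper's own proof'' to compare against.

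Your argument is correct. The moved-space framework and the telescoping inclusion are standard, and your computation of $\det\bigl((u-t)|_{\operatorname{Mov}(u)}\bigr)$ via the Cayley transform is right: writing $u-t=A+\tfrac{2}{(\alpha_t,\alpha_t)}\alpha_t\alpha_t^{\ast}$ with $A=(u-\mathrm{id})|_{\operatorname{Mov}(u)}$, the matrix determinant lemma gives the factor $1+\tfrac{2}{(\alpha_t,\alpha_t)}(\alpha_t,A^{-1}\alpha_t)=\tfrac{(\alpha_t,C\alpha_t)}{(\alpha_t,\alpha_t)}$, and skewness of $C$ (which follows since $A$, hence $C$, commutes with $u$) kills it. Your appeal to $P(u)=C_W(V^u)$ (Remark~\ref{rmq_centralizer}) is not circular, as that remark is established before Theorem~\ref{carter} and does not rely on it. The sketch for the ``independent $\Rightarrow$ $T$-reduced'' direction can be made precise in one line: if $v\in V^{t_1w}$ with $w=t_2\cdots t_k$, then $(w-\mathrm{id})v=-c\,\alpha_{t_1}$ for some scalar $c$; the left side lies in $\operatorname{Mov}(w)=\sum_{i\ge 2}\mathbb{R}\alpha_{t_i}$ by induction while $\alpha_{t_1}\notin\operatorname{Mov}(w)$, forcing $c=0$ and hence $V^{t_1w}=V^{t_1}\cap V^w$.

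As for the comparison you raise at the end: Carter's original argument in~\cite{Carter} is specific to Weyl groups and proceeds via his classification of conjugacy classes by admissible diagrams; it is not the uniform linear-algebra route you take. Your approach is closer in spirit to Brady--Watt~\cite{BW2} (which the paper also cites), where the partial order via moved spaces is developed for the full orthogonal group; the Cayley-transform step is a clean way to keep the codimension-reducing reflection inside $W$, which is exactly the point where the naive Scherk argument would leave $W$.
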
 
Note that $V^t$ is the reflecting hyperplane $H_t$ of $t$. In general, the inclusion $V^v\subseteq V^u$ for $u,v\in T$ does not imply that $u\leq_T v$: for instance, Coxeter elements always have a trivial set of fixpoints, but there are element failing to be below a given Coxeter element for the absolute order in general. The first statement in the above Theorem is sometimes called ``Carter's Lemma''.  

We recall some well-known facts about parabolic subgroups associated to noncrossing partitions. Parabolic subgroups can also be described as centralizers of subspaces in the geometric representation $V$ as recalled in Proposition~\ref{prop_parab} above. One can associate to every $x\in\NC(W,c)$ its parabolic closure $P(x)$. Below, we show that it can also be defined as $$P(x)=\langle t\in T~|~t\leq_T x\rangle$$ and that given $x, y\in \NC(W,c)$, we have $x\leq_T y$ if and only if $P(x)\subseteq P(y)$. Indeed, let $w\in \langle t\in T~|~t\leq_T x\rangle$. By point $(3)$ of the Theorem above, we have that $V^x\subseteq V^w$, implying that $w\in C_W(V^x)$. Conversely, let $w\in C_W(V^x)$. Let $t_1 t_2\cdots t_k$ be a $T$-reduced decomposition of $w$. We have $t_i\leq_T w$ for all $i$, implying by point (3) of the above Theorem again that $V^x\subseteq V^w\subseteq V^{t_i}$ for all $i$. Using point (4) we get that $t_i\leq_T x$ for all $i$, hence that $t_i\in \langle t\in T~|~t\leq_T x\rangle$ for all $i$, which implies that $w$ itself lies in $\langle t\in T~|~t\leq_T x\rangle$. This shows that $\langle t\in T~|~t\leq_T x\rangle=C_W(V^x)=P(x)$. Now assume that $x\leq_T y$. Then by point (2) above we have $V^y\subseteq V^x$, hence $P(x)=C_W(V^x)\subseteq C_W(V^y)=P(y)$. The converse follows by the main result of~\cite{BW2}. 

\begin{rmq}\label{t_dans_pc}
By Theorem~\ref{carter}~(2), each Coxeter element has a trivial set of fixed points in $V$. By point (4) of the same Theorem it implies that $T\subseteq \NC(W,c)$ for all $c\in\Cox(W,S)$.
\end{rmq}

\section{Dual braid monoids}\label{dual}

Dual braid monoids provide alternative Garside structures on Artin-Tits groups of spherical type. More precisely, a dual braid monoid attached to a finite Coxeter group embeds into the corresponding Artin-Tits group and is isomorphic to its group of (left) fraction. It was initially introduced by Birman, Ko and Lee~\cite{BKL} in type $A_n$, by Bessis, Digne and Michel~\cite{BDM} in type $B_n$ and by Bessis~\cite{Dual} for all finite Coxeter groups. The idea is to take as generating set for $W$ (resp. for the dual braid monoid) the set $T$ (resp. a copy of the set $T$) of reflections. A choice of standard Coxeter element $c\in\Std(W,S)$ is required to define the dual braid monoid. We recall that all Coxeter systems are assumed to be finite. 

We recall from Remark~\ref{t_dans_pc} that $T\subseteq \NC(W,c)$ for all $c\in\Cox(W,S)$.

\begin{definition}[Dual braid monoid]
Let $c\in\Std(W,S)$. The \defn{dual braid monoid} $B_c^*$ associated to $W$ and $c$ is defined by $$B_c^*:=\langle t_c\in T_c~ |~t_c t'_c= (tt't)_c t_c~\text{if }tt'\in\NC(W,c)\rangle.$$ 
\end{definition}

We recall some properties of dual braid monoids; the reader is referred to \cite{Dual} or \cite[Section 3]{DG} for more details and proofs. The monoid $B_c^*$ is a Garside monoid with group of (left) fractions isomorphic to $B(W)$, in particular it embeds into $B(W)$. The embedding $\iota: B_c^*\hookrightarrow B(W)$ is hard to describe combinatorially; for each $s\in S$ we have $\iota(s_c)=\mathbf{s}$, but in the case where $t\in T\backslash{S}$, there is no immediate way to express $\iota(t_c)$ in the generators $\mathbf{S}\cup \mathbf{S}^{-1}$ of $B(W)$. One way it achieve this purpose is to use the dual braid relations inductively. For instance, if $W$ has type $A_2$ with Coxeter element $c=s_1 s_2$, then we have $(s_1)_c (s_2)_c=(s_1 s_2 s_1)_c (s_1)_c$, hence $\iota( (s_1 s_2 s_1)_c)=\mathbf{s}_1 \mathbf{s}_2 \mathbf{s}_1^{-1}$. Another way is given by Proposition~\ref{prop_atomes} below. 

While noncrossing partitions can be defined for an arbitrary Coxeter element, for dual braid monoids the Coxeter element is required to be standard (see~\cite[Remark 5.11]{DG}). The set of \defn{simple elements} or \defn{simples} $\mathrm{Div}(c)$ of $B_c^*$ is in one-to-one correspondence with $\NC(W,c)$. If is defined as the set of elements in $B_c^*$ which (left-)divide the lift $c_c= (s_1)_c (s_2)_c\cdots (s_n)_c$ of the Coxeter element $c=s_1 s_2\cdots s_n$. The element $x_c$ corresponding to $x$ is built as follows: given a $T$-reduced expression $t_1 t_2\cdots t_k$ of $x$, the element $(t_1)_c (t_2)_c \cdots (t_k)_c$ turns out to be independent of the chosen $T$-reduced expression, and we denote it by $x_c$ (this follows from the transitivity of the Hurwitz action on the reduced decompositions of parabolic Coxeter elements, see~\cite{Dual}). We abuse notation and simply write $x_c$ instead of $\iota(x_c)$. We want to find an efficient way of expressing $x_c$ in the classical Artin generators of $B(W)$. For generators of $B_c^*$, it can be done in the following way~\cite[Proposition 3.13]{DG}:

\begin{prop}\label{prop_atomes}
Let $c=s_1 s_2 \cdots s_n\in\Std(W,S)$. We have $$T_c=\{ \mathbf{s}_1 \mathbf{s}_2 \cdots \mathbf{s}_i \mathbf{s}_{i+1} \mathbf{s}_i^{-1} \cdots \mathbf{s}_2^{-1} \mathbf{s}_1^{-1}~|~0\leq i\leq 2|T|\},$$ where the indices are taken modulo $n$. 
\end{prop}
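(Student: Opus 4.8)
The plan is to prove Proposition~\ref{prop_atomes} by induction on $i$, showing that each element $\mathbf{s}_1 \mathbf{s}_2 \cdots \mathbf{s}_i \mathbf{s}_{i+1} \mathbf{s}_i^{-1} \cdots \mathbf{s}_1^{-1}$ is the image under $\iota$ of a generator $t_c \in T_c$, and that conversely every dual generator arises this way. First I would set up notation: write $c = s_1 s_2 \cdots s_n$ with the cyclic convention $s_{i+n} = s_i$, and for $0 \leq i$ let $u_i := s_1 s_2 \cdots s_i \in W$ (so $u_0 = e$ and $u_n = c$), with positive lift $\mathbf{u}_i = \mathbf{s}_1 \cdots \mathbf{s}_i$. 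The claim is that the reflections $t^{(i)} := u_i s_{i+1} u_i^{-1}$ (indices mod $n$) are exactly the reflections in $T$, that they all lie in $\NC(W,c)$, and that $\iota(t^{(i)}_c) = \mathbf{u}_i \mathbf{s}_{i+1} \mathbf{u}_i^{-1}$.

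The key step is a single application of the dual braid relation. Suppose inductively that $t^{(i-1)}_c$ has image $\mathbf{u}_{i-1}\mathbf{s}_i \mathbf{u}_{i-1}^{-1}$ in $B(W)$. One computes that $t^{(i-1)} = u_{i-1} s_i u_{i-1}^{-1}$ and $s_i$ satisfy $t^{(i-1)} s_i = u_{i-1} s_i u_{i-1}^{-1} s_i$; I would check that the product $t^{(i-1)} s_i$ lies in $\NC(W,c)$ (this is where one uses that $c = s_1 \cdots s_n$ is standard and the standard fact that $s_1 \cdots s_{i-1}\cdot s_i \cdot s_i \cdots$ — more precisely that the Hurwitz moves on the $T$-reduced word $s_1 s_2 \cdots s_n$ for $c$ produce exactly the word obtained by the cyclic shift, hence $t^{(i-1)}, s_i$ appear as consecutive letters in some $T$-reduced word for $c$, so $t^{(i-1)}s_i \leq_T c$). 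The dual braid relation then gives $t^{(i-1)}_c (s_i)_c = \big(t^{(i-1)} s_i t^{(i-1)}\big)_c \, t^{(i-1)}_c$ in $B_c^*$; conjugating instead on the other side, or applying the relation in the symmetric form, yields $(s_i)_c^{-1} t^{(i-1)}_c (s_i)_c$ as a dual generator whose $W$-image is $s_i t^{(i-1)} s_i = s_i u_{i-1} s_i u_{i-1}^{-1} s_i = u_i s_{i+1}' \cdots$; I would track the indices carefully to identify this conjugate with $t^{(i)}$. Applying $\iota$, using $\iota((s_i)_c) = \mathbf{s}_i$ and the induction hypothesis, gives $\iota(t^{(i)}_c) = \mathbf{s}_i^{-1} \mathbf{u}_{i-1} \mathbf{s}_i \mathbf{u}_{i-1}^{-1}\mathbf{s}_i = \mathbf{u}_i \mathbf{s}_{i+1} \mathbf{u}_i^{-1}$ after re-indexing, completing the induction step.

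Finally I would address surjectivity and the range of $i$: since $T_c$ has $|T|$ elements and the reflections $t^{(0)}, t^{(1)}, \dots$ eventually cycle, I need to argue that running $i$ from $0$ up to roughly $2|T|$ exhausts all of $T$. This is essentially the statement that the Hurwitz orbit of the word $s_1 s_2 \cdots s_n$ under the braid-group action, restricted to cyclic-shift-type moves, sweeps out every reflection $t \leq_T c$, together with Remark~\ref{t_dans_pc} which says $T \subseteq \NC(W,c)$ so in fact every reflection is a prefix of $c$. I would invoke the known description (from~\cite{Dual} or~\cite[Section 3]{DG}) of the atoms of $B_c^*$ to confirm that the map $i \mapsto t^{(i)}$ is surjective onto $T$ within the stated range, and that the displayed formula for $\iota$ is the intended one.

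The main obstacle I anticipate is the bookkeeping in the inductive step: getting the conjugation on the correct side of the dual braid relation, handling the cyclic index shift $s_{i+1}$ versus $s_i$ correctly when $i$ exceeds $n$, and verifying cleanly that the relevant product of two reflections is a noncrossing partition at each stage (so that the dual braid relation is actually available). The underlying group theory is standard — it is the transitivity of the Hurwitz action on $T$-reduced decompositions of $c$, already cited in the excerpt — but packaging it so the induction goes through for all $i$ in the stated range, rather than just $0 \leq i \leq n$, requires care.
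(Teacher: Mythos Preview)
The paper does not actually prove Proposition~\ref{prop_atomes}; it is imported verbatim from~\cite[Proposition 3.13]{DG}, so there is no in-paper argument to compare against. That said, your inductive step contains a genuine error, not just bookkeeping.

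You propose to pass from $t^{(i-1)} = u_{i-1} s_i u_{i-1}^{-1}$ to $t^{(i)} = u_i s_{i+1} u_i^{-1}$ by conjugating $t^{(i-1)}$ by $s_i$, but this fails already in $W$: one has $s_i t^{(i-1)} s_i = s_i u_{i-1} s_i u_{i-1}^{-1} s_i$, which is in general \emph{not} equal to $u_i s_{i+1} u_i^{-1}$. For instance in type $A_3$ with $c = s_1 s_2 s_3$ one has $t^{(1)} = s_1 s_2 s_1$ and $s_2 t^{(1)} s_2 = s_2 s_1 s_2 s_1 s_2$, whereas $t^{(2)} = s_1 s_2 s_3 s_2 s_1$; these are distinct reflections. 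Consequently the asserted braid identity $\mathbf{s}_i^{-1} \mathbf{u}_{i-1} \mathbf{s}_i \mathbf{u}_{i-1}^{-1}\mathbf{s}_i = \mathbf{u}_i \mathbf{s}_{i+1} \mathbf{u}_i^{-1}$ is false, and no re-indexing can repair it since the generator $\mathbf{s}_{i+1}$ does not appear on the left-hand side at all. The dual braid relation you invoke does hold, but it produces the lift of the wrong reflection.

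A clean fix (for $0\leq i\leq n-1$) avoids conjugation altogether. Observe that $t^{(j)} t^{(j-1)}\cdots t^{(0)} = u_{j+1}$ for every $j$, so $u_{i+1}\leq_T c$ admits the two $T$-reduced decompositions $s_1 s_2\cdots s_{i+1}$ and $t^{(i)} t^{(i-1)}\cdots t^{(0)}$. Since the simple $x_c$ is independent of the chosen $T$-reduced word, the first decomposition gives $(u_{i+1})_c=\mathbf{s}_1\cdots\mathbf{s}_{i+1}=\mathbf{u}_{i+1}$, while the second gives $(u_{i+1})_c=(t^{(i)})_c(t^{(i-1)})_c\cdots(t^{(0)})_c$. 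By induction $(t^{(i-1)})_c\cdots(t^{(0)})_c=\mathbf{u}_i$, whence $(t^{(i)})_c=\mathbf{u}_{i+1}\mathbf{u}_i^{-1}=\mathbf{u}_i\mathbf{s}_{i+1}\mathbf{u}_i^{-1}$. For $i\geq n$ one then uses $t^{(i+n)}=c\,t^{(i)}c^{-1}$ together with the fact that conjugation by the Garside element $c_c=\mathbf{u}_n$ sends $t_c$ to $(ctc^{-1})_c$. Your remarks on surjectivity are fine in spirit.
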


However this does not give braid words of smallest possible length in general. 

We prove a few Lemmatas which will be useful later on. Most of them are well-known, but we provide proofs for completeness. 

\begin{lem}\label{dual1}
Let $I\subseteq S$. Let $c\in \Std(W,S)$ and let $c_I$ denote the restriction of $c$ to the standard parabolic subgroup $W_I:=\langle s~|~s\in I\rangle$ (obtained by deleting the letters in a reduced expression of $c$ which are not in $I$). Note that we have $c_I\leq_T c$. 
\begin{enumerate}
\item There is an embedding $B_{c_I}^*\subseteq B_c^*$ induced by $t_{c_I} \mapsto t_c$, $t\in T\cap W_I$. 
\item Let $x\in \NC(W_I,{c_I})\subseteq \NC(W,c)$. Then $x_c=x_{c_I}$. 
\end{enumerate}
\end{lem}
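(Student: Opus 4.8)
The plan is to prove both parts by unwinding the definitions of the dual braid monoids and their simple elements, using the key fact that $c_I \leq_T c$ and, crucially, that the interval $\NC(W_I, c_I)$ sits inside $\NC(W,c)$ as a subinterval below $c_I$. The first observation I would record is that for $t, t' \in T \cap W_I$, we have $tt' \in \NC(W_I, c_I)$ if and only if $tt' \in \NC(W, c)$ and $tt' \leq_T c_I$; and in that situation the conjugate $tt't$ again lies in $T \cap W_I$, with $(tt't)$ computed inside $W_I$ agreeing with the computation inside $W$. This requires knowing that $\leq_T$ restricted to the reflection subgroup $W_I$ (with its own reflections $W_I \cap T = T \cap W_I$) is the restriction of $\leq_T$ on $W$; this is standard (it follows from Carter's lemma, Theorem~\ref{carter}, since linear independence of roots is an absolute notion, so $T$-reduced decompositions in $W_I$ are $T$-reduced in $W$), but I would state it explicitly as it is the backbone of the argument.

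For part (1), the defining relations of $B_{c_I}^*$ are the relations $t_{c_I} t'_{c_I} = (tt't)_{c_I} t_{c_I}$ whenever $tt' \in \NC(W_I, c_I)$. Sending $t_{c_I} \mapsto t_c$, each such relation maps to $t_c t'_c = (tt't)_c t_c$, which is a valid relation in $B_c^*$ precisely because $tt' \in \NC(W_I, c_I)$ implies $tt' \in \NC(W, c)$ (the inclusion $\NC(W_I, c_I) \subseteq \NC(W, c)$ noted in the statement) and because $(tt't)$ is the same element whether computed in $W_I$ or in $W$. Hence the assignment extends to a well-defined monoid homomorphism $\varphi : B_{c_I}^* \to B_c^*$. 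To see it is injective, I would pass to the groups of fractions: $B_{c_I}^*$ embeds in $B(W_I)$ and $B_c^*$ embeds in $B(W)$, and the map $\varphi$ is compatible (on simple reflections $s \in I$ it sends $s_{c_I} \mapsto s_c$, and both further map to the Artin generator $\mathbf{s}$) with the standard embedding $B(W_I) \hookrightarrow B(W)$ coming from the inclusion of Artin systems. Since that embedding of Artin groups is injective (it is the classical fact that standard parabolic subgroups of Artin--Tits groups of spherical type embed), and $B_{c_I}^* \hookrightarrow B(W_I)$ is injective, $\varphi$ is injective as well.

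For part (2), let $x \in \NC(W_I, c_I)$ and choose a $T$-reduced expression $x = t_1 t_2 \cdots t_k$ with all $t_i \in T \cap W_I$ — such an expression exists because $x \leq_T c_I$ and reduced $T$-decompositions of a parabolic Coxeter element of $W_I$ use only reflections of $W_I$. By the backbone fact above this is also a $T$-reduced expression of $x$ inside $W$. Now $x_{c_I}$ is by definition $(t_1)_{c_I} (t_2)_{c_I} \cdots (t_k)_{c_I}$ in $B_{c_I}^*$ (independent of the chosen reduced word by Hurwitz transitivity), and applying $\varphi$ gives $(t_1)_c (t_2)_c \cdots (t_k)_c = x_c$ in $B_c^*$ (again independent of the reduced word). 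Identifying $B_{c_I}^*$ with its image under $\varphi$ inside $B_c^*$, we get $x_c = x_{c_I}$.

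The main obstacle is the injectivity in part (1): one must be careful that the map $\varphi$ really is well-defined (checking all defining relations are respected, which hinges on the compatibility of the conjugation operation $t, t' \mapsto tt't$ and of $\leq_T$ between $W_I$ and $W$) and then invoke, rather than reprove, the embedding of spherical-type Artin parabolic subgroups together with $B_{c_I}^* \hookrightarrow B(W_I)$. Everything else is bookkeeping with $T$-reduced words and Hurwitz transitivity, which is cited from \cite{Dual}.
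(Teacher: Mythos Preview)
Your proposal is correct and follows essentially the same route as the paper: both construct the monoid map $\varphi$ by checking that the dual braid relations of $B_{c_I}^*$ are relations in $B_c^*$, and both deduce injectivity from the commuting square formed by $B_{c_I}^*\hookrightarrow B(W_I)\hookrightarrow B(W)$ and $B_c^*\hookrightarrow B(W)$. The only (minor) difference is how the square is shown to commute: the paper verifies $t_{c_I}=t_c$ in $B(W)$ for \emph{every} $t\in T\cap W_I$ via Hurwitz transitivity on $\Red_{T_I}(c_I)$ starting from $(c_I)_{c_I}=(c_I)_c$, whereas you check it only on the simple reflections $s\in I$ and implicitly pass to the group of fractions---this is valid because $B(W_I)$ is generated \emph{as a group} by the Artin generators $\{\mathbf{s}:s\in I\}$, but it is worth making that step explicit, since the $s_{c_I}$ for $s\in I$ do not generate $B_{c_I}^*$ as a monoid.
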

\begin{proof}
Since dual braid relations in $B_{c_I}^*$ are dual braid relations in $B_c^*$ (because $c_I\leq_T c$), there is a map $\varphi_I: B_{c_I}^*\longrightarrow B_c^*$.

The reflection length of $x$ in $W_I$ is equal to its reflection length in $W$ and the reflections in $W_I$ are given by $T_I:=T\cap W_I$. By Theorem~\ref{carter}, we have that every $T$-reduced decomposition of $x$ has all its reflections in $W_I$. To show the second part, it therefore suffices to show that for $t\in T_I$, we have $t_{c_I}=t_c$. This is true for simple reflections, implying that $$(c_I)_{c_I}= (c_I)_c.$$
We have $t\leq_T c_I$ by Remark~\ref{t_dans_pc}, hence the result follows by the above equality and the transitivity of the Hurwitz action on $\Red_{T_I} (c_I)$ (and its lift in the dual braid monoid; see~\cite[Theorem 3.10, Corollary 3.12]{DG}): $t_{c_I}$ can be obtained by a successive application of dual braid relations to the reduced decomposition $(s_1)_c (s_2)_c \cdots (s_k)_c$ of $(c_I)_{c_I}=({c_I})_c$, which can be performed either in $B_{c_I}^*$ or in $B_c^*$. This shows the second part. It also shows that the composition
 $$B_{c_I}^*\hookrightarrow B(W_I)\hookrightarrow B(W)$$
is equal to the composition of $\varphi_I$ with the embedding $B_c^*\hookrightarrow B(W)$, proving the first claim. 

\end{proof}

\begin{lem}\label{dual2}
Let $c\in\Std(W,S)$ and let $s\in S$ be initial in $c$ (that is, $c$ has an $S$-reduced expression beginning by $s$). Let $x\in \NC(W,c)$. Note that $sxs\in \NC(W,scs)$. Then $$(sxs)_{scs}=\mathbf{s}^{-1} x_c \mathbf{s}.$$
\end{lem}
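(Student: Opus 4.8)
The plan is to argue by induction on the reflection length $\ell_T(x)$, using a $T$-reduced expression of $x$ and the defining relations of the dual braid monoid together with the conjugation action of $s$. Write $c = s s_2 \cdots s_n$ with $s$ initial, so that $scs = s_2 \cdots s_n s$ is again a standard Coxeter element. For $\ell_T(x)=0$, i.e. $x=e$, both sides are the identity of $B(W)$ and there is nothing to prove. For $\ell_T(x)=1$, i.e. $x=t\in T$, one can use Proposition~\ref{prop_atomes}: the generators $T_{scs}$ of $B_{scs}^*$ are the words $\mathbf{s}_2 \cdots \mathbf{s}_i \mathbf{s}_{i+1}\mathbf{s}_i^{-1}\cdots \mathbf{s}_2^{-1}$ (indices cyclic with respect to the ordering $s_2,\dots,s_n,s$), and conjugating by $\mathbf{s}$ on the left and $\mathbf{s}^{-1}$ on the right shifts the cyclic window by one, landing precisely in the list describing $T_c$; matching this with the conjugation $t \mapsto sts$ on the group side gives $(sts)_{scs} = \mathbf{s}^{-1} t_c \mathbf{s}$.

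For the inductive step, pick $t\in T$ with $t \leq_T x$ and write $x = t x'$ with $\ell_T(x') = \ell_T(x)-1$ and $x' \leq_T t^{-1}c$ — more carefully, one uses that $t_1 t_2 \cdots t_k$ a $T$-reduced expression of $x$ gives $x_c = (t_1)_c (t_2)_c \cdots (t_k)_c$, so choosing $t = t_1$ we have $x_c = t_c \cdot x'_c$ where $x' = t^{-1}x = t_2\cdots t_k$. The key point is that $sxs = (sts)(sx's)$ is a $T$-reduced expression of $sxs$ in $W_{scs}$-terms (conjugation by $s$ preserves $T$, reflection length, and the absolute order), and moreover $sts$ is a reflection $\leq_T scs$, $sx's \leq_T (sts)^{-1}(scs)$, so that $(sxs)_{scs} = (sts)_{scs} \cdot (sx's)_{scs}$. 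Now apply the length-$1$ case to the first factor and the induction hypothesis to the second:
$$ (sxs)_{scs} = \bigl(\mathbf{s}^{-1} t_c \mathbf{s}\bigr)\bigl(\mathbf{s}^{-1} x'_c \mathbf{s}\bigr) = \mathbf{s}^{-1} t_c x'_c \mathbf{s} = \mathbf{s}^{-1} x_c \mathbf{s}, $$
which closes the induction.

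The main obstacle — the step deserving genuine care rather than routine bookkeeping — is the length-$1$ base case, i.e. checking that the cyclic list of Proposition~\ref{prop_atomes} for $scs$ is exactly the left-conjugate by $\mathbf{s}$ of the list for $c$, with the correspondence of indices matching the map $t\mapsto sts$ on reflections. One has to be attentive to the fact that removing $s$ from the front of $c$ and appending it at the back shifts \emph{all} the indices, and that the index range ``$0 \le i \le 2|T|$'' in Proposition~\ref{prop_atomes} is large enough that this shift stays within the list; it is also worth double-checking the edge case $t = s$ itself, where $sts = s$ and the identity reads $s_{scs} = \mathbf{s} = \mathbf{s}^{-1}\mathbf{s}\,\mathbf{s}$ trivially. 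A cleaner alternative for the base case, avoiding Proposition~\ref{prop_atomes} entirely, is to invoke the Hurwitz-transitivity argument used in the proof of Lemma~\ref{dual1}: both $(sts)_{scs}$ and $\mathbf{s}^{-1}t_c\mathbf{s}$ are obtained from the lift of (a conjugate of) the Coxeter element by the same sequence of dual braid relations, once one observes that conjugation by $s$ intertwines the dual braid relations for $c$ with those for $scs$; I would likely present whichever of these two routes makes the index-tracking least error-prone, and I expect the conjugation-intertwining observation to be the conceptual heart of the argument.
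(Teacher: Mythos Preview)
Your proof is correct and shares the paper's overall structure: reduce to $x\in T$ by factoring a $T$-reduced expression and multiplying the reflection case. The difference lies in how the length-$1$ case is handled. The paper does not touch Proposition~\ref{prop_atomes} at all: it first checks the identity for $x=s'\in S$ directly---for $s'=s$ it is trivial, and for $s'\neq s$ one uses that $s$ is \emph{final} in $scs$, hence $s's\leq_T scs$, giving the single dual braid relation $(s')_{scs}\,s_{scs}=s_{scs}(ss's)_{scs}$, i.e.\ $(ss's)_{scs}=\mathbf{s}^{-1}\mathbf{s}'\mathbf{s}$---and then passes from $S$ to all of $T$ by Hurwitz transitivity. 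This is exactly the ``cleaner alternative'' you sketch at the end, made concrete: the paper's insight is that one dual braid relation already settles the simple reflections, so no cyclic-index bookkeeping is needed. Your primary route via Proposition~\ref{prop_atomes} also works and is more explicit, but as you yourself note, the shift-by-one argument demands care with the edge case $i=0$ and the cyclic indexing; the paper's approach sidesteps all of that.
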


\begin{proof}
Again, it suffices to show the result in the case where $x\in T$. If $x\in S$, then the claim is true: if $x=s$ it is clear, while if $x=s'\neq s$, we have that $s$ appears after $s'$ in $scs$ since $s$ is final in $scs$, hence that $s's\leq_T scs$, implying that we have a dual braid relation $$(s')_{scs} s_{scs}= s_{scs} (ss's)_{scs},$$ and hence $$(sxs)_{scs}=(ss's)_{scs}=\mathbf{s}^{-1} \mathbf{s'} \mathbf{s}=\mathbf{s}^{-1} x_c \mathbf{s}.$$ The result for all $t\in T$ follows by Hurwitz transitivity. 
\end{proof}

\section{Coxeter sortable elements}

From now on, unless explicitly stated otherwise, we will always assume $(W,S)$ to be finite. We recall from~\cite{Reading} the definition and basic properties of $c$-sortable elements. Fix an $S$-reduced decomposition $s_1 s_2 \cdots s_n$ of a standard Coxeter element $c\in \Std(W,S)$. Consider the semi-infinite word $$c^\infty= s_1 s_2 \cdots s_n| s_1 s_2 \cdots s_n | s_1 s_2\cdots s_n |\cdots .$$ Let $w\in W$. The \defn{$c$-sorting word} for $w$ is the lexicographically first $S$-reduced expression of $w$ appearing as a subword of $c^\infty$. We write $w=w_1 w_2\cdots w_k$, where $w_i$ is the subword of the $c$-sorting word for $w$ coming from the $i$-th copy of $c$ in $c^\infty$ and $k$ is maximal such that this subword is nonempty. Note that each $w_i$ defines a subset of $S$, consisting of the letters in the word $w_i$. We abuse notation and also denote this set by $w_i$. 

\begin{definition}[Coxeter sortable elements]
We say that $w$ is \defn{$c$-sortable} if $w_k \subseteq w_{k-1} \subseteq \dots \subseteq w_2 \subseteq w_1$. 
\end{definition}

\begin{exple}
Let $W$ be of type $A_3$, $S=\{s_1, s_2, s_3\}$ with $s_1 s_3=s_3 s_1$. Let $c=s_3 s_1 s_2$. Then $w=s_3 s_1 s_2| s_3 s_1$ is $c$-sortable, with $w_1=s_3 s_1 s_2$, $w_2=s_3 s_1\subseteq w_1$. The element $s_2 s_1$ is not $c$-sortable since if it was, we would have $s_2 s_1=w_1$, but in $c$ the letter $s_1$ appears before $s_2$ (and they do not commute). 
\end{exple}

We denote by $\Sort_c(W)$ the set of $c$-sortable elements. It is independent of the reduced expression of the Coxeter element $c$ which was chosen (any two reduced expressions of $c$ are related by commutation of letters, and the same holds for the $c$-sorting words). We say that a simple reflection $s\in S$ is \defn{initial} in $c$ if $c$ has an $S$-reduced expression beginning by $s$. 

The following two Lemmatas, which are immediate consequences of the definition of $c$-sortable elements, are crucial in the theory of $c$-sortable elements. For $s\in S$, we denote by $W_{<s>}$ the maximal parabolic subgroup of $W$ generated by $S\backslash\{ s\}$. 

\begin{lem}\label{lem:read1}
Let $s$ be initial in $c$. Let $w\in W$ be such that $\ell(sw) > \ell(w)$. Then $w\in\Sort_c(W)\Leftrightarrow w\in \Sort_{sc}(W_{<s>})$.
\end{lem}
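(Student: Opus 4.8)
The plan is to reduce everything to a careful reading of the definition of the $c$-sorting word, built on two observations. First, since $s$ is initial in $c$, there is a reduced expression $c=s_1 s_2\cdots s_n$ with $s_1=s$ and the $s_i$ pairwise distinct, so $sc=s_2\cdots s_n$ is a standard Coxeter element of $W_{<s>}=\langle S\setminus\{s\}\rangle$; choosing this reduced expression for $sc$ (harmless, since $\Sort_{sc}(W_{<s>})$ does not depend on the choice), the word $(sc)^\infty$ is obtained from $c^\infty$ by deleting every occurrence of $s$, i.e.\ the first letter of each length-$n$ block, and within a single block of $c^\infty$ the letter $s$ occurs only in the first position. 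Second, the $c$-sorting word of $w$ selects the initial letter $s=s_1$ of $c^\infty$ precisely when $\ell(sw)<\ell(w)$, because by the greedy/lexicographic-first description that letter is selected exactly when $s$ is a left descent of $w$; hence under our hypothesis $\ell(sw)>\ell(w)$ it is not selected, so $s\notin w_1$.

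The first step I would carry out is: \emph{if $\ell(sw)>\ell(w)$ and $w$ is $c$-sortable, then the $c$-sorting word of $w$ uses no letter equal to $s$, so $w\in W_{<s>}$.} Indeed $s\notin w_1$ by the observation above, and the nesting condition $w_k\subseteq\cdots\subseteq w_1$ of $c$-sortability forces $s\notin w_i$ for all $i$, so $s$ appears in no block and hence not at all in the $c$-sorting word. Conversely, \emph{if $w\in W_{<s>}$ then every reduced word of $w$ lies in $S\setminus\{s\}$}, so its $c$-sorting word trivially avoids $s$, and moreover prepending $s$ to a reduced word of $w$ stays reduced, giving $\ell(sw)=\ell(w)+1>\ell(w)$ automatically. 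Thus in both directions of the claimed equivalence we are reduced to the situation $w\in W_{<s>}$.

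The second step is to show that \emph{for $w\in W_{<s>}$, the $c$-sorting word of $w$ coincides with its $sc$-sorting word, block by block.} Since $w\in W_{<s>}$, every reduced subword of $c^\infty$ representing $w$ avoids $s$, hence is a subword of $(sc)^\infty$, and conversely subwords of $(sc)^\infty$ are subwords of $c^\infty$; deletion of the $s$-positions is an order-isomorphism from the non-$s$ positions of $c^\infty$ onto the positions of $(sc)^\infty$, so it preserves the lexicographic order on subwords, and therefore the lexicographically first reduced subword representing $w$ is the same computed in $c^\infty$ or in $(sc)^\infty$. Since the only position removed from the $i$-th block of $c^\infty$ is its initial letter $s$, which is unused, the $i$-th $c$-block and the $i$-th $sc$-block of this common word consist of exactly the same letters; hence the subsets $w_i$ agree, and $w\in\Sort_c(W)\Leftrightarrow w\in\Sort_{sc}(W_{<s>})$ is literally the same inclusion chain $w_k\subseteq\cdots\subseteq w_1$. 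Combining the two steps proves the lemma.

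The only genuinely delicate point is this block-matching step, which rests on pinning down Reading's definition precisely — that the $c$-sorting word is the lexicographically first reduced word for $w$ among subwords of $c^\infty$, with $w_i$ the set of letters drawn from the $i$-th copy of $c$ — and on applying the order-isomorphism argument to \emph{positions} rather than letters, so that lexicographic comparison is genuinely preserved. Everything else is bookkeeping with the semi-infinite words, and I expect no obstacle beyond that care.
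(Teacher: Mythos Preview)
Your argument is correct and is precisely the unpacking of the definition that the paper has in mind: the paper does not give a proof but simply records Lemmas~\ref{lem:read1} and~\ref{lem:read2} as ``immediate consequences of the definition of $c$-sortable elements'' (referring to Reading~\cite{Reading}). Your two steps---first that $\ell(sw)>\ell(w)$ forces $s\notin w_1$ and hence, by nesting, $w\in W_{<s>}$, and second that for $w\in W_{<s>}$ the $c$- and $sc$-sorting words agree block by block via the position-deletion bijection---are exactly the verification one carries out to justify this, so your approach coincides with the paper's.
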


\begin{lem}\label{lem:read2}the s
Let $s$ be initial in $c$. Let $w\in W$ be such that $\ell(sw) < \ell(w)$. Then $w\in\Sort_c(W)\Leftrightarrow sw\in \Sort_{scs}(W)$.
\end{lem}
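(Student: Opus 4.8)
The plan is to prove Lemma~\ref{lem:read2} by comparing the two sorting words directly, which is the content of Reading's Cambrian recurrence. Fix the reduced expression $c = s_1 s_2 \cdots s_n$ with $s = s_1$ initial. Then $scs = s_2 s_3 \cdots s_n s_1$ is again a standard Coxeter element, and the key elementary observation is that the semi-infinite word $c^\infty$ with its first letter $s$ deleted coincides, letter by letter, with $(scs)^\infty$. Since $\Sort_{scs}(W)$ does not depend on the chosen reduced expression of $scs$, I may work with this particular one throughout.

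First I would show that the $c$-sorting word $R$ of $w$ begins with $s$. As $\ell(sw) < \ell(w)$, $s$ is a left descent of $w$, so $w$ admits a reduced word starting with $s$; because $s$ occupies position $1$ of $c^\infty$, any reduced subword of $c^\infty$ avoiding position $1$ is lexicographically larger, so the lexicographically first one, namely $R$, uses it. Writing $R = s R'$, the tail $R'$ is a reduced word for $sw$ (here $\ell(sw) < \ell(w)$ is exactly what guarantees $R$ reduced) and, being $R$ with its leading letter removed, it is a subword of $(scs)^\infty$. If some reduced subword $R''$ of $(scs)^\infty$ for $sw$ were lex-smaller than $R'$, then $sR''$ would be a reduced subword of $c^\infty$ for $w$ lex-smaller than $R$, a contradiction; hence $R'$ is the $scs$-sorting word of $sw$, and
$$ (c\text{-sorting word of } w) \;=\; s \cdot (scs\text{-sorting word of } sw). $$

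Next I would compare the block decompositions $w = w_1 w_2 \cdots w_k$ and $sw = v_1 v_2 \cdots v_m$ (each block read as a subset of $S$). The block separators of $c^\infty$ fall after positions $n, 2n, 3n, \dots$, and after deleting the leading $s$ these cuts sit one step earlier relative to $(scs)^\infty$. Since $s$ occurs only at the start of each copy of $c$, tracking which copy each selected letter lies in yields the single set identity
$$ v_i = (w_i \setminus \{s\}) \cup (\{s\} \cap w_{i+1}) \quad\text{for all } i, $$
with $s \in w_1$ always: every simple reflection other than $s$ stays in its block, while $s$ is recorded one block earlier in the $scs$-sorting word. From here the equivalence follows. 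If $w_k \subseteq \cdots \subseteq w_1$, the indices containing $s$ form an initial segment $\{1, \dots, p\}$, so $v_i = w_i$ for $i \ne p$ and $v_p = w_p \setminus \{s\}$; checking $v_{i+1} \subseteq v_i$ case by case then gives $sw \in \Sort_{scs}(W)$, the only nonroutine case $i = p$ using $s \notin w_{p+1}$. The converse is the same computation read backwards, the identity above being invertible ($w_i \setminus \{s\} = v_i \setminus \{s\}$, and $s \in w_i \Leftrightarrow s \in v_{i-1}$ for $i \ge 2$).

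The hard part will be the second step: pinning down the block-index bookkeeping cleanly and handling the boundary cases — that $w_1$ always contains $s$, and that the final block may collapse (if $w_k = \{s\}$ then $sw$ has one block fewer, which is harmless for the nesting conditions). I expect the cleanest packaging to be exactly the set identity $v_i = (w_i\setminus\{s\})\cup(\{s\}\cap w_{i+1})$, from which both implications and all edge cases fall out by inspection; everything else (existence and first letter of the sorting word, and the letter-sequence identity between $c^\infty$ minus its leading $s$ and $(scs)^\infty$) is immediate from the definitions.
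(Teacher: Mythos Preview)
Your proof is correct. The paper does not give its own proof of this lemma: it simply states Lemmas~\ref{lem:read1} and~\ref{lem:read2} as ``immediate consequences of the definition of $c$-sortable elements'' (they are due to Reading). What you have written is exactly the elementary unpacking of the definition that the paper alludes to: the identification of $(scs)^\infty$ with $c^\infty$ minus its first letter, the observation that the $c$-sorting word of $w$ must begin with $s$ when $\ell(sw)<\ell(w)$, and the block identity $v_i=(w_i\setminus\{s\})\cup(\{s\}\cap w_{i+1})$ are precisely the ingredients, and your case check of the nesting conditions (including the boundary case $i=p$ and the possible collapse of the last block) is complete.
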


The two above Lemmatas allow one to argue by induction on the rank of $W$ and the length of $c$-sortable elements (without fixing a Coxeter element). 

Reading defined a bijection $\Read: \Sort_c(W)\longrightarrow \NC(W,c)$ as follows. Consider the $c$-sorting word $s_{i_1} \cdots s_{i_\ell}$ for $w$. Put a total order on $N(w)$ such that the $j$-th element is $s_{i_1} s_{i_2}\cdots s_{i_j} s_{i_{j-1}} \cdots s_{i_1}$. Set $D_R(w):=\{s\in S~|~\ell(ws) < \ell(w)\}$ and $$\Cov(w):=\{wsw^{-1}~|~s\in D_R(w)\}\subseteq N(w).$$ The elements of $\Cov(w)$ are the \defn{cover reflections} of $w$. Order the elements of $\Cov(w)$ with respect to the total order on $N(w)$ defined above, say $t_1, \dots, t_k$, and define an element $x\in W$ as the product $t_1 t_2\cdots t_k$.

\begin{theorem}[Reading~\cite{Reading}]\label{thm_reading}
With the above notations, the assignment $w\mapsto x:=\Read(w)$ defines a bijection $\Sort_c(W)\longrightarrow \NC(W,c)$. The word $t_1 t_2\cdots t_k$ is $T$-reduced and $\{t_1, t_2, \dots, t_k\}$ is the set of canonical generators of the parabolic subgroup $P(x)\subseteq W$. 
\end{theorem}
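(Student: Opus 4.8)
The plan is to establish all three assertions at once — that $\Read(w)=t_1\cdots t_k$ lies in $\NC(W,c)$ with $T$-reduced expression, that $\{t_1,\dots,t_k\}$ is the canonical generating set of $P(\Read(w))$, and that $\Read$ is a bijection — by a double induction on the pair (rank of $W$, $\ell(w)$), run over \emph{all} standard Coxeter elements of a fixed $W$ simultaneously, using the Cambrian recurrence of Lemmas~\ref{lem:read1} and~\ref{lem:read2}. The base case $w=e$ is immediate: the $c$-sorting word is empty, $\Read(e)=e\in\NC(W,c)$, and $P(e)$ is trivial with empty canonical generating set. For the inductive step I fix $s\in S$ initial in $c$ and split $\Sort_c(W)$ according to whether $\ell(sw)>\ell(w)$ (equivalently $s\notin N(w)$, ``Case A'') or $\ell(sw)<\ell(w)$ (equivalently $s\in N(w)$, ``Case B'').

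\textbf{Case A (rank reduction).} If $s\notin N(w)$, then the leading $s$ of $c^\infty$ cannot start a reduced word of $w$, so it is skipped; since $w$ is $c$-sortable its supports are nested, forcing $s$ out of every $w_i$ and hence $w\in\Ws$, and the $c$-sorting word of $w$ coincides with its $sc$-sorting word inside $\Ws$. Consequently the ordered inversion set, the cover reflections, and $\Read_c(w)$ are all computed entirely within $\Ws$, giving $\Read_c(w)=\Read_{sc}^{\Ws}(w)$; by Lemma~\ref{lem:read1} this matches the domain $\Sort_{sc}(\Ws)$, and the inductive hypothesis (smaller rank) supplies the three properties there. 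They transfer verbatim to $W$: for $x\in\Ws$ the fixed space $V^x$ is the same in $\Ws$ and in $W$, so by Carter's Lemma (Theorem~\ref{carter}) reflection length, the order $\leqt$, and the parabolic closure $P(x)$ agree in both groups, and in particular $\NC(\Ws,sc)\subseteq\NC(W,c)$.

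\textbf{Case B (length reduction).} If $s\in N(w)$, write $w=sw'$ with $\ell(w')=\ell(w)-1$; Lemma~\ref{lem:read2} gives $w\in\Sort_c(W)\Leftrightarrow w'\in\Sort_{scs}(W)$. Deleting the leading $s$ from $c^\infty$ produces exactly $(scs)^\infty$, so the $c$-sorting word of $w$ is $s$ followed by the $scs$-sorting word of $w'$; this yields the ordered inversion set $N(w)=\{s\}\sqcup sN(w')s$ with $s$ first, and the cover-reflection recurrence $\Cov(w)=s\,\Cov(w')\,s$ if $s\notin\Cov(w)$, while $\Cov(w)=\{s\}\sqcup s\,\Cov(w')\,s$ (with $s$ first) if $s\in\Cov(w)$. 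Multiplying in order gives
\[
\Read_c(w)=\begin{cases} s\,\Read_{scs}(w')\,s, & s\notin\Cov(w),\\[2pt] \Read_{scs}(w')\,s, & s\in\Cov(w).\end{cases}
\]
In the first subcase, conjugation by $s$ carries the $T$-reduced element $\Read_{scs}(w')\leqt scs$ (known by the length induction) to a $T$-reduced element $\leqt s(scs)s=c$ and carries canonical generators to canonical generators, so all three properties persist. The delicate subcase is the second, where one \emph{appends} the new cover reflection $s$: here one must verify directly that $\Read_{scs}(w')\,s$ is again $T$-reduced (its $\lt$-length grows by one), that it lies $\leqt c$, and that $\{s\}$ together with the conjugated canonical generators of $P(\Read_{scs}(w'))$ is precisely the canonical generating set of the enlarged parabolic closure.

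\textbf{Bijectivity and the main obstacle.} Granting the two cases, bijectivity is obtained by matching these recursions against the analogous recursive decomposition of the target, namely $\NC(W,c)=\bigl(\NC(W,c)\cap\Ws\bigr)\sqcup\{x\in\NC(W,c): x\notin\Ws\}$ with $\NC(W,c)\cap\Ws=\NC(\Ws,sc)$ (Case A image) and the complementary block in bijection with $\NC(W,scs)$ via the displayed maps composed with Lemma~\ref{lem:read2} (Case B). The main obstacle is exactly the alignment in the append-$s$ subcase: one must prove that the sortable-side dichotomy $s\in\Cov(w)$ versus $s\notin\Cov(w)$ corresponds precisely to the noncrossing-side dichotomy governing whether passing from $\NC(W,scs)$ to $\NC(W,c)$ is realized by conjugation or by extension, so that the append-$s$ operation is well defined, injective, and surjective onto the correct complementary block. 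Establishing this compatibility of the cover-reflection recurrence with the $\leqt$-structure of $\NC(W,c)$ is the crux of the argument, and it is what makes the $T$-reducedness and canonical-generator statements fall out simultaneously along the induction.
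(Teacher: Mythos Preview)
The paper does not prove this theorem at all: it is stated with attribution to Reading~\cite{Reading} and used as a black box throughout. There is therefore no ``paper's own proof'' to compare against. What you have written is an outline of Reading's original argument, and the overall architecture --- Cambrian recurrence on $(\text{rank},\ell(w))$, splitting on whether $s$ is a left descent, and tracking the recursion $\Read_c(w)=s\,\Read_{scs}(sw)\,s$ or $\Read_{scs}(sw)\,s$ according to whether $s\in\Cov(w)$ --- is indeed the right one (the displayed recursion is exactly \cite[Lemma~6.5]{Reading}, which this paper cites repeatedly).

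That said, your proposal is a plan rather than a proof, and you say so yourself. Two places are genuinely incomplete. First, the cover-reflection recurrence $\Cov(w)=s\,\Cov(sw)\,s$ (resp.\ $\{s\}\sqcup s\,\Cov(sw)\,s$) is asserted, not derived: it is equivalent to $D_R(w)=D_R(sw)$ (resp.\ $D_R(sw)\cup\{w^{-1}sw\}$), which is \emph{not} a general Coxeter-group fact and requires the sortability hypothesis in an essential way. Second, your final paragraph explicitly flags the ``main obstacle'' --- that the dichotomy $s\in\Cov(w)$ versus $s\notin\Cov(w)$ matches the noncrossing-side dichotomy $s\leq_T x$ versus $s\not\leq_T x$ --- but does not resolve it. This is precisely the content that makes Reading's proof work, and without it you have neither the $T$-reducedness in the append-$s$ subcase nor surjectivity onto the complementary block of $\NC(W,c)$. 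So the skeleton is correct, but the proof as written stops at the point where the real work begins.
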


We denote by $S_c: \NC(W,c)\longrightarrow \Sort_c(W)$ the map which is the inverse of the bijection $\Read$. In Section~\ref{explicit}, we will give an explicit description of $S_c$ in the classical types, using the combinatorial models of noncrossing partitions.

The following Lemma is the part which requires a case-by-case investigation. The proof is postponed to Section~\ref{explicit}:

\begin{lem}\label{lem:casparcasbis}
Let $W$ be a finite Coxeter group, $c\in W$ a Coxeter element and $s\in S$ be initial in $c$. Let $x\in\NC(W, c)$, $y:=x^{-1}c$. Then $$x^{-1} s x\in N(y)~\Leftrightarrow~ x^{-1} sx\in N(S_c(y)).$$
\end{lem}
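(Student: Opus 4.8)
The statement is a biconditional about whether a fixed reflection $r := x^{-1}sx$ lies in the inversion set of $y$ versus the inversion set of its sortable counterpart $S_c(y)$. Since $\Read$ (equivalently $S_c$) and the sets $N(y)$, $N(S_c(y))$ are all defined with respect to a chosen standard Coxeter element, and since $x\in\NC(W,c)$ with $y=x^{-1}c\in\NC(W,c)$, the natural strategy is a Cambrian-type induction on $(\rank W, \ell(y))$ using Lemmatas~\ref{lem:read1} and~\ref{lem:read2}, splitting according to the behaviour of the initial letter $s$. Before the induction it is worth recording a few general facts that the case analysis will lean on: first, $r\leq_T c$ (indeed $r\leq_T x$ since $r=x^{-1}sx$ has trivial... more precisely $r$ is a reflection conjugate into the parabolic $P(x)$, so $r\leq_T x\leq_T c$); second, by Carter's Lemma and the characterization $P(x)=\langle t\leq_T x\rangle$, the reflection $r$ lies in $N(y)$ iff a certain geometric/combinatorial condition relating $H_r$ to the inversion data of $y$ holds; third, by Theorem~\ref{thm_reading} the canonical generators of $P(S_c(y)) = P(y)$ are exactly the cover reflections of $S_c(y)$, read off its $c$-sorting word.

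\textbf{The inductive step.} Write $w := S_c(y)$. There are two cases for $s$ initial in $c$.

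\emph{Case 1: $\ell(sw) > \ell(w)$, i.e.\ $s\notin D_L(w)$.} Here Lemma~\ref{lem:read1} gives $w\in\Sort_{sc}(W_{<s>})$, so $y\in\NC(W_{<s>}, sc)$ lives in the smaller parabolic $W_{<s>}$; concretely this is the situation where $s$ is \emph{initial in} $y$ in the $\NC$-sense, or dually where $s\notin N(w)$. One shows that in this case $x$ must contain $s$ as a left $T$-divisor in a compatible way — more precisely, since $c = xy$ and $s$ is initial in $c$ but not an inversion of $w=S_c(y)$, the reflection $s$ is "absorbed" into $x$: one deduces $sx \leq_T sc$ (equivalently $x' := s x$ with $x'\in\NC(W_{<s>},sc)$ after conjugation, or $x = s x''$ with appropriate bookkeeping). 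Then Lemma~\ref{dual1} localizes the whole statement to $W_{<s>}$ with Coxeter element $sc$, and the inclusion $N_{W_{<s>}}(y) = N(y)\cap W_{<s>}$ together with $r\in T\cap W_{<s>}$ lets us apply the induction hypothesis in rank $\rank W - 1$. The one thing to check carefully is that $r = x^{-1}sx$ indeed lands in $W_{<s>}$ and equals the reflection one would form from the reduced data inside $W_{<s>}$; this uses that $r\leq_T x$ and $x\in W_{<s>}$ (after the reduction above), hence $r\in T\cap W_{<s>}$.

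\emph{Case 2: $\ell(sw) < \ell(w)$, i.e.\ $s\in D_L(w)$.} Here Lemma~\ref{lem:read2} gives $sw\in\Sort_{scs}(W)$, and one checks $S_{scs}(sys) = sw = s\cdot S_c(y)$; on the $\NC$ side, $s$ is final in $c$ is false but $s\in N(w)$ translates, via $\Read$, into $s$ being a cover reflection contribution, which corresponds to the fact that $s\leq_T c$ with $scs\neq c$ realized by conjugating through. Set $x' := sxs\in\NC(W, scs)$ and $y' := sys = x'^{-1}(scs)$. By Lemma~\ref{dual2} the simple dual braids transform by $\mathbf{s}$-conjugation, but for the present statement the relevant fact is purely at the level of inversion sets: $t\mapsto sts$ carries $N(y)\setminus\{s\} \leftrightarrow N(sys)$ bijectively (with $s$ itself toggling in/out), and similarly $N(w)\setminus\{s\}\leftrightarrow N(sw)$. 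So the biconditional "$r\in N(y)\Leftrightarrow r\in N(w)$" for $(W,c,x,y)$ is equivalent to the biconditional "$srs\in N(y')\Leftrightarrow srs\in N(sw)$" for $(W, scs, x', y')$, and $srs = (sxs)^{-1} s (sxs) = x'^{-1} s x'$ is exactly the reflection attached to $x'$ and the initial letter $s$ of $scs$. Since $\ell(y') = \ell(sys) < \ell(y)$ (as $s\in N(y)$ in this case), the induction hypothesis on $\ell(y)$ applies. The edge case where $r = s$ itself (so that $x$ commutes with $s$ or $x^{-1}sx = s$) must be handled directly: then $r=s\in N(y)$ iff $\ell(sy)<\ell(y)$, and one checks this matches $s\in N(S_c(y))$ by the very case distinction we are in.

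\textbf{Base case and the main obstacle.} The base of the induction is $y = e$ (so $N(y) = N(S_c(y)) = \varnothing$ and both sides are false), or $\rank W \leq 1$, both trivial. The real content — and the reason this Lemma is the one place the classification is invoked — is that the case reductions above secretly require knowing, in Case 1, that $s$ not being an inversion of $S_c(y)$ forces $sx\leq_T sc$ (the "absorption" claim), and in Case 2 the compatibility $S_{scs}(sys) = s\,S_c(y)$ beyond what Lemmatas~\ref{lem:read1}--\ref{lem:read2} literally give; proving these uniformly would amount to exactly the uniform understanding of $\Read^{-1}$ that the author says is missing. So my plan is: set up the induction as above, reduce uniformly to the two claims just named, and then \emph{prove those two claims by going through the combinatorial models type by type} (types $A$, $B$, $D$ via the explicit description of $S_c$ to be developed in Section~\ref{explicit}, and $I_2(m)$ plus the exceptionals $E_6, E_7, E_8, F_4, H_3, H_4$ by direct computation). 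The hardest part will be organizing the type-$D$ verification, since the noncrossing-partition model there is the most intricate and the formula for $S_c = \Read^{-1}$ is correspondingly delicate; everything in types $A$ and $B$ should follow from the cycle/signed-cycle combinatorics fairly mechanically once $\Read^{-1}$ is in hand.
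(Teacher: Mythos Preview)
Your Cambrian-recurrence plan does not close, and the difficulty is not where you locate it. In Case~1 you have the inclusion backwards: when $sS_c(y)>S_c(y)$, Lemma~\ref{lem:read1} gives $S_c(y)\in W_{<s>}$, hence $y=\Read(S_c(y))\in W_{<s>}$ and $sx=(sc)y^{-1}\in W_{<s>}$; then $r=x^{-1}sx=(sx)^{-1}s(sx)$ is a $W_{<s>}$-conjugate of $s$ and therefore lies \emph{outside} $W_{<s>}$, so $r\notin N(y)$ and $r\notin N(S_c(y))$ trivially --- no induction and no ``absorption claim'' is needed here. (Relatedly, your opening claim $r\leq_T x$ is false in general: $x^{-1}sx\in P(x)$ iff $s\leq_T x$.) The genuine gap is Case~2: if $s$ is initial in $c$ then $s$ is \emph{final} in $scs$, so after conjugating, the statement ``$x'^{-1}sx'\in N(y')\Leftrightarrow x'^{-1}sx'\in N(S_{scs}(y'))$'' is \emph{not} an instance of the lemma for $(W,scs,x',y')$, which requires the simple reflection to be initial in the ambient Coxeter element. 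Replacing $s$ by a genuinely initial letter $s'$ of $scs$ changes the reflection under consideration to $x'^{-1}s'x'$, which bears no controlled relation to $r$; the Cambrian recurrence simply does not track a fixed reflection of the form $x^{-1}sx$. (You also need to split Case~2 according to whether $s$ is a cover reflection of $S_c(y)$, since $S_{scs}(sys)=sS_c(y)$ holds only in the non-cover case; and $sS_c(y)<S_c(y)$ does not give $s\in N(y)$, so your length-drop is unjustified.) Finally, the two claims you flag as needing the classification --- the ``absorption'' and the compatibility $S_{scs}(sys)=sS_c(y)$ --- are both known uniformly, the latter being \cite[Lemma~6.5]{Reading}; you have misidentified where the type-by-type work enters.

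The paper's proof takes a completely different route: no Cambrian recurrence is attempted for this lemma. Instead, Section~\ref{explicit} builds an explicit combinatorial description of $S_c=\Read^{-1}$ in each classical type via a total order $\prec$ on the polygons of the noncrossing-partition model, and verifies the biconditional directly. In type $A_n$, writing $s=s_i$, $j=c^{-1}(i)$, $k=c^{-1}(i+1)$, the claim is reformulated purely in terms of $y$ as the single inequality $S_c(y)^{-1}(y(j))>S_c(y)^{-1}(y(k))$, which is then proved by showing that the polygon of $y$ containing $k$ precedes (in $\prec$) the one containing $j$ (Lemma~\ref{order_polygons_a}). Type $B_n$ reduces to $A_{2n-1}$ by folding; type $D_n$ uses analogous but more intricate polygon arguments (Lemmas~\ref{lem:conf} and~\ref{lem:confbis}); $I_2(m)$ is a direct computation; the exceptional types are a computer check. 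The Cambrian recurrence is reserved for the proof of Theorem~\ref{thm:main}, where the present lemma is precisely the input needed in the third case.
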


\begin{rmq}\label{rmq:refor}
Note that in the assumptions of the Lemma, we have $x^{-1}sx\in N(y)$ if and only if $s\notin N(x)$; indeed, since $s$ is initial in $c$ we have $$s\in N(c)=N(xy)=N(x) + x N(y) x^{-1},$$ where $+$ denotes symmetric difference. 
\end{rmq}

%\begin{lem}\label{lem:casparcas}
%Let $W$ be of type $A_n, B_n, D_n$ or $I_2(m)$. Let $c\in\Std(W)$ and $s\in S$ be initial in $c$. Let $y\in\NC(W, c)$ be such that $s\in N(S_c(y))$ and $s$ is not a cover reflection of $S_c(y)$. Then if $x=cy^{-1}$, we have $$x^{-1}sx\in N(S_c(y))\Leftrightarrow s\notin N(x).$$

%\end{lem}

\section{Mikado braids and a closed formula for simple dual braids}\label{mikado}

The aim of this section is to prove the main result, namely Theorem~\ref{thm:main}, assuming Lemma~\ref{lem:casparcasbis} (which will be proven by a case-by-case analysis in Section~\ref{explicit}). We begin by recalling the definition and properties of Mikado braids. 

\subsection{Mikado braids}\label{mik}

Mikado braids appeared in work of Dehornoy~\cite{D} in type $A_n$ and in unpublished work of Dyer~\cite{Dyernil} for abritrary Coxeter groups. A detailed study was initiated in \cite{DG}, \cite{Twisted}. Mikado braids are elements of an Artin-Tits group attached to an arbitrary Coxeter group. In this paper, we recall that we restrict to the spherical type, where the definition is easier and has several equivalent formulations (for a general definition see~\cite[Definition 3.5]{Twisted})

\begin{lem}[Dyer~\cite{Dyernil}]
Let $W$ be a finite Coxeter group. Let $B(W)$ be the corresponding Artin-Tits group. Let $x,y\in W$. Define an element $x_{N(y)}$ of $B(W)$ as follows: choose a reduced expression $s_1 s_2\cdots s_k$ of $x$ and set $$x_{N(y)}= \mathbf{s}_1^{\varepsilon_1} \mathbf{s}_2^{\varepsilon_2}\cdots \mathbf{s}_k^{\varepsilon_k},$$ where $\varepsilon_i= -1$ if $s_k s_{k-1}\cdots s_i s_{i+1} \cdots s_{k-1} s_k\in N(y)$ and $\varepsilon_i=1$ otherwise. Then $x_{N(y)}$ is well-defined. 
\end{lem}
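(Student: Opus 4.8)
The plan is to show that the element $x_{N(y)}$ does not depend on the choice of reduced expression $s_1 s_2 \cdots s_k$ of $x$. Since any two reduced expressions of $x$ are related by a sequence of braid moves (this is Matsumoto's/Tits' theorem, implicitly available in the paper's framework via the discussion of positive lifts), it suffices to check invariance under a single braid move $\underbrace{sts\cdots}_{m} \mapsto \underbrace{tst\cdots}_{m}$ applied to a consecutive block of the reduced word, where $m = m_{s,t}$. Writing $x = u \cdot \underbrace{sts\cdots}_{m} \cdot v$ as a reduced factorization, the letters outside the block contribute the same exponents on both sides (the reflections $s_k s_{k-1}\cdots s_i \cdots s_k$ attached to positions in $u$ and in $v$ are unchanged, since for positions in $v$ the prefix being conjugated is unaffected, and for positions in $u$ the relevant reflection only involves the suffix which changes by the braid relation but whose \emph{product} $\underbrace{sts\cdots}_{m} = \underbrace{tst\cdots}_{m}$ is the same group element). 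So everything reduces to a rank-two computation inside the dihedral parabolic subgroup $\langle s, t\rangle$.

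The key step is therefore the dihedral case: I would fix the parabolic subgroup $W' = \langle s,t \rangle$ and analyze the $m$ positions inside the block. For the word $\underbrace{sts\cdots}_{m}$, the reflection attached to position $i$ (counting so that position $1$ is the \emph{last} letter and position $m$ the first, to match the ``$s_k s_{k-1}\cdots$'' convention) is the $i$-th element in a natural ordering of the $m$ reflections of $W'$; the same is true, with the \emph{same} set of $m$ reflections listed in the \emph{reverse} order, for the word $\underbrace{tst\cdots}_{m}$. Indeed $w_0^{W'} = \underbrace{sts\cdots}_{m} = \underbrace{tst\cdots}_{m}$ has all $m$ reflections of $W'$ as inversions, and the two reduced words enumerate $N(w_0^{W'})$ in opposite orders. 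Now the exponent $\varepsilon_i$ depends only on whether the reflection attached to position $i$ lies in $N(y)$ — and the multiset of exponents, read in the order in which the letters occur in the braid word, is what determines the corresponding element of $B(W')$.

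The cleanest way to finish is to observe that $x_{N(y)}$ restricted to the block is precisely the Mikado/lift construction for the element $w_0^{W'}$ with respect to the \emph{same} inversion set $N(y) \cap T_{W'}$, and this lift is independent of which of the two reduced words of $w_0^{W'}$ one uses. Concretely: both reduced words of $w_0^{W'}$ yield the \emph{same} element of $B(W')$ once each letter is decorated by $+1$ or $-1$ according to membership of its attached inversion in a fixed subset $R \subseteq N(w_0^{W'})$ — because the two decorated words are related by the corresponding \emph{signed} braid relation $\underbrace{\mathbf{s}^{\pm}\mathbf{t}^{\pm}\cdots}_{m} = \underbrace{\mathbf{t}^{\pm}\mathbf{s}^{\pm}\cdots}_{m}$, which is a relation that holds in $B(W')$ whenever the signs are assigned this way. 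This last relation can be verified directly (it is the content of e.g. the rewriting rules for Mikado braids in \cite{DG}, \cite{Dyernil}), or one can cite it.

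The main obstacle I expect is bookkeeping: getting the indexing conventions exactly right so that ``position $i$ in the braid word'' is matched with the correct reflection $s_k s_{k-1}\cdots s_i s_{i+1}\cdots s_k$, and checking carefully that, under a braid move, positions lying in the prefix $u$ really do keep their attached reflections (this uses that the suffix after them changes only by a relation whose product is fixed, so the conjugating element, which is that product times the part of $v$, is literally unchanged). Once the reduction to the dihedral block is made correctly, the remaining verification is the finite, essentially combinatorial check above, and the only genuinely structural input is the existence of the signed braid relation in $B(W')$, which I would either prove by a short explicit induction on $m$ or quote from \cite{Dyernil} / \cite{DG}.
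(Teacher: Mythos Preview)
The paper does not actually prove this lemma; it is stated with attribution to Dyer and no argument is given. Your overall strategy---Matsumoto's theorem to reduce to a single braid move, then separating the prefix~$u$, the dihedral block, and the suffix~$v$, and finally a rank-two computation---is the standard and correct route, and your bookkeeping for the positions in $u$ and in $v$ is fine.

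The gap is in the dihedral step. You claim that for \emph{any} fixed subset $R\subseteq T_{W'}$, decorating the two reduced words of $w_0^{W'}$ according to membership in $R$ yields the same element of $B(W')$. This is false. Take $W'$ of type $A_2$ and $R=\{s,t\}$ (omitting $sts$): the two decorated words are $\mathbf{s}^{-1}\mathbf{t}\,\mathbf{s}^{-1}$ and $\mathbf{t}^{-1}\mathbf{s}\,\mathbf{t}^{-1}$, and equality of these would force $(\mathbf{t}\mathbf{s}^{-1})^3=1$, which is impossible since braid groups are torsion-free and $\mathbf{t}\mathbf{s}^{-1}\neq 1$. The signed braid relation holds only when $R$ is \emph{biclosed} in $T_{W'}$ (equivalently, when $R$ is the inversion set of some element of $W'$; equivalently, an initial segment for one of the two reflection orders on $T_{W'}$).

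What is missing from your argument is therefore twofold: (i) the observation that the subset of $T_{W'}$ that actually arises---namely $v\bigl(N(y)\cap v^{-1}T_{W'}v\bigr)v^{-1}$---is biclosed, which follows because $N(y)$ is biclosed in $T$ and biclosedness is preserved under intersection with a reflection subgroup and under conjugation; and (ii) the correct statement and verification of the dihedral signed braid relation \emph{under that hypothesis}. Once you add these, the proof goes through; without them, the ``short explicit induction on $m$'' you propose would fail.
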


For arbitrary $W$, the set $N(y)$ in the above definition is replaced by any biclosed set of roots (these sets are precisely inversion sets when the group is finite. In general there are many more biclosed sets than inversion sets). 

\begin{definition}
Let $\beta\in B(W)$. We say that $\beta$ is a \defn{Mikado braid} if there are $x,y\in W$ such that $\beta=x_{N(y)}$. 
\end{definition}

\begin{exple}\label{ex_mik}
Let $W$ be of type $A_3$, with set $S$ of generators given by $s_1, s_2, s_3$ (where $s_1 s_3=s_3 s_1$). We claim that $\beta:=\mathbf{s}_2 \mathbf{s}_1 \mathbf{s}_3^{-1} \mathbf{s}_2$ is a Mikado braid. Let $y=s_3 s_2$. Then $$N(y)=\{ s_3, s_3 s_2 s_3=s_2 s_3 s_2\}.$$
Let $x:=s_2 s_1 s_3 s_2$. The expression $s_2 s_1 s_3 s_2$ is $S$-reduced. Setting $i_1=2, i_2= 1, i_3=3, i_4=2$ we have $s_{i_4}= s_2$, $s_{i_4} s_{i_3} s_{i_4}= s_3 s_2 s_3$, $s_{i_4} s_{i_3} s_{i_2} s_{i_3} s_{i_4}=s_2 s_1 s_2$, $s_{i_4} s_{i_3} s_{i_2} s_{i_1} s_{i_2} s_{i_3} s_{i_4}=s_3 s_2 s_1 s_2 s_3$. Hence we get $x_{N(y)}=\mathbf{s}_2 \mathbf{s}_1 \mathbf{s}_3^{-1} \mathbf{s}_2=\beta$, which shows that $\beta$ is Mikado.  
\end{exple}

\begin{lem}[Dyer]\label{mik_basic}
Let $x,y\in W$. Then 
\begin{enumerate}
\item We have $(xy^{-1})_{N(y)}= \mathbf{x} \mathbf{y}^{-1}$,
\item If $y=e$, then $x_{N(y)}=\mathbf{x}$. 
\end{enumerate}
\end{lem}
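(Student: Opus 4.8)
The plan is to verify both statements directly from the defining formula for $x_{N(y)}$ in Dyer's Lemma, using the description of inversion sets under products. First I would observe that (2) is the special case of (1) where $y=e$: if $y=e$ then $N(y)=\varnothing$, so every sign $\varepsilon_i$ equals $1$, whence $x_{N(y)}=\mathbf{s}_1\mathbf{s}_2\cdots\mathbf{s}_k=\mathbf{x}$ for any reduced expression $s_1\cdots s_k$ of $x$. Alternatively, (2) follows from (1) by taking $y=e$ there, since $xy^{-1}=x$ and $\mathbf{y}^{-1}$ is trivial; I would present it as a one-line corollary of (1).

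For (1), the key point is to choose the reduced expression of $z:=xy^{-1}$ cleverly. Write $y=u_1u_2\cdots u_m$ as a reduced expression and $z=v_1v_2\cdots v_p$ as a reduced expression; since $\ell(z)=\ell(xy^{-1})$ and we may assume $x$ has length $\ell(z)+\ell(y)$ is \emph{not} automatic, so instead I would start from a reduced expression of $x$ that ends in a reduced expression of $y$: because $\mathbf{x}\mathbf{y}^{-1}$ only depends on $x$ and $y$, and the statement is about $z_{N(y)}$, the cleanest route is to pick a reduced word $s_1\cdots s_k$ for $z=xy^{-1}$ and then analyze the reflections $r_i:=s_k s_{k-1}\cdots s_i\cdots s_{k-1}s_k$ appearing in the sign rule. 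The sign $\varepsilon_i$ is $-1$ exactly when $r_i\in N(y)$. Now $r_i$ is the $i$-th inversion of $z$ read from the right, i.e. $N(z)=\{r_1,\dots,r_k\}$, and more precisely $r_i = (s_1\cdots s_{i-1}) s_i (s_1\cdots s_{i-1})^{-1}$ reindexed; the product $\mathbf{s}_1^{\varepsilon_1}\cdots\mathbf{s}_k^{\varepsilon_k}$ telescopes. The substantive input is the identity $N(xy^{-1}) = N(x) \mathbin{\triangle} x\, N(y)\, x^{-1}$-style relations (compare Remark \ref{rmq:refor}, where the same symmetric-difference formula $N(xy)=N(x)+xN(y)x^{-1}$ is used): one shows that flipping the generator $\mathbf{s}_i$ to $\mathbf{s}_i^{-1}$ precisely at those positions $i$ with $r_i\in N(y)$ converts the positive lift $\mathbf{z}$ into $\mathbf{x}\mathbf{y}^{-1}$. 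Concretely, I would induct on $\ell(y)=m$: if $m=0$ this is (2); if $m>0$ write $y = y' t$ with $t\in S$ and $\ell(y')<\ell(y)$, so $xy^{-1}=(x y'^{-1}) \cdot (\text{one more reflection})$, relate $N(y)$ to $N(y')$, and check that the extra flip introduced corresponds exactly to multiplying on the right by $\mathbf{t}^{-1}$ modulo the already-established case for $y'$.

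The main obstacle I anticipate is bookkeeping: making the telescoping of $\mathbf{s}_1^{\varepsilon_1}\cdots\mathbf{s}_k^{\varepsilon_k}$ rigorous requires carefully matching the reflections $r_i$ occurring in the sign rule with the inversions of $xy^{-1}$, and tracking how these inversions split between those ``coming from $x$'' and those ``coming from $y$'' under the symmetric-difference decomposition of $N(xy^{-1})$. Once that correspondence is pinned down, the identity $x_{N(y)} = \mathbf{x}\mathbf{y}^{-1}$ for $x$ replaced by $xy^{-1}$ reduces to the well-definedness already granted by Dyer's Lemma together with the basic fact that $\mathbf{x}\mathbf{y}^{-1}$ is independent of the chosen reduced expressions. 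I expect the cleanest writeup to be the induction on $\ell(y)$ sketched above, with the base case handled by (2), so that no explicit global telescoping argument is needed.
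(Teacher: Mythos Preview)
Your treatment of (2) is exactly the paper's: $N(e)=\emptyset$, so all signs are $+1$ and $x_{N(e)}=\mathbf{x}$. For (1) the paper does not give an argument at all; it simply refers to \cite[Lemma~3.3(4)]{Twisted}. So your proposal is strictly more than what the paper does, and the inductive strategy on $\ell(y)$ is a correct way to obtain a self-contained proof.

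There is one wrinkle in your inductive step that you should repair. Writing $y=y't$ with $t$ a \emph{right} descent gives $\mathbf{y}^{-1}=\mathbf{t}^{-1}\mathbf{y'}^{-1}$, so the target $\mathbf{x}\mathbf{y}^{-1}=\mathbf{x}\,\mathbf{t}^{-1}\mathbf{y'}^{-1}$ has the extra $\mathbf{t}^{-1}$ sitting in the middle, not on the right; your sentence ``the extra flip \dots corresponds exactly to multiplying on the right by $\mathbf{t}^{-1}$'' is therefore not what one actually needs to check, and the bookkeeping from there is unpleasant. The clean choice is a \emph{left} descent: write $y=sy'$ with $s\in S$ and $\ell(y')=\ell(y)-1$. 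Then $z:=xy^{-1}=(xy'^{-1})s=:ws$, $N(y)=\{s\}\sqcup sN(y')s$, and $\mathbf{x}\mathbf{y}^{-1}=\mathbf{x}\mathbf{y'}^{-1}\mathbf{s}^{-1}$, so by induction it suffices to show $(ws)_{N(y)}=w_{N(y')}\mathbf{s}^{-1}$. This is a two-case check (according to whether $ws>w$ or $ws<w$): in either case one picks a reduced word for $z$ compatible with one for $w$, observes that $s\in N(y)$ while $s\notin N(y')$, and that for the remaining positions the relevant reflection $r$ satisfies $r\neq s$ and $r\in N(y)\Leftrightarrow srs\in N(y')$ (equivalently $r\in sN(y')s$). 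That yields exactly the claimed identity. With this adjustment your plan goes through without difficulty.
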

\begin{proof}
For the first item see for instance~\cite[Lemma 3.3 (4)]{Twisted}. The second item is an immediate consequence of the definition of Mikado braids and the fact that $N(e)=\emptyset$. 
\end{proof}

Let $B(W)^+$ be the positive braid monoid. Define a partial order $\leq$ on $B(W)$ by $a\leq b$ if and only if $a^{-1}b\in B(W)^+$. We have the following characterizations of Mikado braids, which are borrowed from \cite[Section 9]{Dyernil}, \cite[Proposition 4.3]{DG}, \cite[Lemma 3.3]{Twisted}: 

\begin{theorem}\label{thm:caract}
Let $\beta\in B(W)$. The following are equivalent
\begin{enumerate}
\item The element $\beta$ is a Mikado braid,
\item There are $x, y\in W$ such that $\beta=\mathbf{x}^{-1} \mathbf{y}$, 
\item There are $x, y\in W$ such that $\beta=\mathbf{x} \mathbf{y}^{-1}$, 
\item We have $\Delta^{-1}\leq \beta \leq \Delta$, where $\Delta$ is the Garside element for the classical Garside structure (i.e., $\Delta$ is the positive lift of the longest element $w_0\in W$). 
\end{enumerate}
\end{theorem}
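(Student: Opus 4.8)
The plan is to establish the four conditions are equivalent by running the cycle $(1)\Leftrightarrow(3)$, $(2)\Leftrightarrow(3)$, $(3)\Rightarrow(4)$, $(4)\Rightarrow(2)$. Apart from the definitions and Lemma~\ref{mik_basic}, the only inputs are a standard Garside fact --- a positive element left-dividing $\Delta^2$ in $B(W)^+$ is a product of at most two simple elements (see~\cite{Garside}) --- together with the two identities
$$\mathbf{z}^{-1}\Delta=\mathbf{z^{-1}w_0},\qquad \Delta\,\mathbf{z}^{-1}=\mathbf{w_0z^{-1}}\qquad(z\in W),$$
where $\mathbf{u}$ is the positive lift of $u\in W$. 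Both follow from $\Delta=\mathbf{w_0}$ and the length additivity $\ell(z)+\ell(z^{-1}w_0)=\ell(w_0)=\ell(w_0z^{-1})+\ell(z)$, which yields $\mathbf{w_0}=\mathbf{z}\,\mathbf{z^{-1}w_0}=\mathbf{w_0z^{-1}}\,\mathbf{z}$ by concatenating reduced words.

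For $(1)\Leftrightarrow(3)$ I would simply quote Lemma~\ref{mik_basic}\,(1). If $\beta=\mathbf{x}\mathbf{y}^{-1}$, then $\beta=(xy^{-1})_{N(y)}$ is Mikado by definition. Conversely, if $\beta=x_{N(y)}$, then applying Lemma~\ref{mik_basic}\,(1) with the pair $(xy,\,y)$ in place of $(x,y)$ gives $\beta=\big((xy)y^{-1}\big)_{N(y)}=\mathbf{xy}\,\mathbf{y}^{-1}$, which is of the form~(3). For $(2)\Leftrightarrow(3)$ I would rewrite
$$\beta=\mathbf{x}^{-1}\mathbf{y}=(\mathbf{x}^{-1}\Delta)(\Delta^{-1}\mathbf{y})=\mathbf{x^{-1}w_0}\,(\mathbf{y^{-1}w_0})^{-1},$$
using $\Delta^{-1}\mathbf{y}=(\mathbf{y}^{-1}\Delta)^{-1}$ and the identities above; the reverse direction is the symmetric computation $\mathbf{x}\mathbf{y}^{-1}=(\mathbf{x}\Delta^{-1})(\Delta\mathbf{y}^{-1})=(\mathbf{w_0x^{-1}})^{-1}\,\mathbf{w_0y^{-1}}$.

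For $(3)\Rightarrow(4)$: from $\beta=\mathbf{x}\mathbf{y}^{-1}$ we get $\beta^{-1}\Delta=\mathbf{y}\,(\mathbf{x}^{-1}\Delta)=\mathbf{y}\,\mathbf{x^{-1}w_0}\in B(W)^+$, so $\beta\leq\Delta$; rewriting $\beta$ in the form~(2) just established, $\beta=\mathbf{x'}^{-1}\mathbf{y'}$, we also get $\Delta\beta=(\Delta\,\mathbf{x'}^{-1})\mathbf{y'}=\mathbf{w_0x'^{-1}}\,\mathbf{y'}\in B(W)^+$, i.e.\ $\Delta^{-1}\leq\beta$. For $(4)\Rightarrow(2)$: left-multiplying $\Delta^{-1}\leq\beta\leq\Delta$ by $\Delta$ shows that $\Delta\beta$ lies in $B(W)^+$ and left-divides $\Delta^2$; by the Garside fact, $\Delta\beta=\mathbf{a}\mathbf{b}$ with $a,b\in W$, so $\beta=\Delta^{-1}\mathbf{a}\mathbf{b}=(\mathbf{a}^{-1}\Delta)^{-1}\mathbf{b}=(\mathbf{a^{-1}w_0})^{-1}\mathbf{b}$ is of the form~(2). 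This closes the cycle.

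I expect the only genuinely delicate point to be $(4)\Rightarrow(2)$: it is the one place appealing to Garside theory proper rather than to elementary braid and length manipulations, and one should be careful to justify that left-dividing $\Delta^2$ forces canonical length at most two in $B(W)^+$ --- that is, that $\Delta^2$ has canonical length two --- which is precisely where the role of $\Delta$ as a Garside element enters. The rest is routine once the two displayed identities are recorded.
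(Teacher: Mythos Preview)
Your argument is correct. Note, however, that the paper does not actually give its own proof of this theorem: it simply states the result and attributes the various equivalences to \cite[Section 9]{Dyernil}, \cite[Proposition 4.3]{DG}, and \cite[Lemma 3.3]{Twisted}. What you have written is essentially the standard argument found in those references --- the reduction of $(1)\Leftrightarrow(3)$ to Lemma~\ref{mik_basic}, the $\Delta$-complementation trick for $(2)\Leftrightarrow(3)$, and the Garside normal-form bound for $(4)\Rightarrow(2)$ --- so you have supplied the proof that the paper only cites. Your identification of $(4)\Rightarrow(2)$ as the only step requiring genuine Garside input (canonical length of $\Delta^2$ being two) is accurate.
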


\begin{exple}
Let $\beta=\mathbf{s}_1 \mathbf{s}_3^{-1} \mathbf{s}_2$ be as in Example~\ref{ex_mik}. We wish to express $\beta$ as a quotient of two positive canonical lifts of elements of the group, as predicted by the Theorem above. Using Example~\ref{ex_mik} and Lemma~\ref{mik_basic} we should have $\beta= \mathbf{x} \mathbf{y}^{-1}$ with $xy^{-1}=s_2 s_1 s_3 s_2$ and $y= s_3 s_2$, hence $x=s_2 s_1 s_3 s_2 s_3 s_2= s_2 s_1 s_2 s_3$. We indeed have $$\mathbf{x} \mathbf{y}^{-1}=\mathbf{s}_2 \mathbf{s}_1 \mathbf{s}_2 \mathbf{s}_3 \mathbf{s}_2^{-1} \mathbf{s}_3^{-1}=\mathbf{s}_2 \mathbf{s}_1 \mathbf{s}_3^{-1} \mathbf{s}_2 \mathbf{s}_3 \mathbf{s}_3^{-1}=\beta.$$ 
\end{exple}

One of the interesting properties of Mikado braids is that their images in the Iwahori-Hecke algebra of the Coxeter system have polynomials with nonnegative coefficients as coordinates when expressed in the Kazhdan-Lusztig basis. The proof of this result requires categorification techniques. This is one of the interesting representation theoretic features of Mikado braids. We briefly recall these properties. 

Let $v$ be an indeterminate. Recall that the Iwahori-Hecke algebra $\mathcal{H}(W)$ of $W$ is the $\mathbb{Z}[v^{\pm 1}]$-algebra generated by $T_s$, $s\in S$, satisfying the braid relations of $W$ and the quadratic relations $T_s^2=(v^{-2}-1) T_s + v^{-2}$ for all $s\in S$. It follows from the quadratic relations that the $T_s$ are invertible in $\mathcal{H}(W)$ and one has therefore a group homomorphism $\varphi: B(W)\longrightarrow \mathcal{H}(W)^\times$ induced by $\mathbf{s}\mapsto T_s$. For $x\in W$, with positive lift $\mathbf{x}\in B(W)$, denote by $T_x$ the element $\varphi(\mathbf{x})\in\mathcal{H}(W)$. It is well-known that $\{T_x\}_{x\in W}$ is a $\mathbb{Z}[v^{\pm 1}]$-linear basis of $\mathcal{H}(W)$, called \textit{standard basis}. Let $\{C_w\}_{w\in W}$ be the Kazhdan-Lusztig $C$-basis (see \cite{Twisted} for the notation and references). One has the following (conjectured by Dyer in~\cite{Dyer_thesis}; see~\cite{DL} for the case of finite Weyl groups, \cite{Twisted} for the general case)

\begin{theorem}\label{thm:gobet}   
Let $W$ be a (not necessarily finite) Coxeter group. Let $x, y\in W$. Then $$\varphi(\mathbf{x} \mathbf{y}^{-1})= T_x T_y^{-1}\in \sum_{w\in W}\mathbb{Z}_{\geq 0}[v^{\pm 1}] C_w.$$
\end{theorem}

Classifying braids which have an image which is positive in the Kazhdan-Lusztig basis appears as an interesting question to us. If $W$ is finite, we have no example of a braid which is not a Mikado braid but satisfies this property.

\subsection{Simple dual braids are Mikado braids}\label{sdbmik}

Let $p: B(W)\twoheadrightarrow W$ denote the canonical surjection induced by $\mathbf{s}\mapsto s$, $s\in S$. For all $x,y\in W$, we have $p(x_{N(y)})=x$. The following Theorem has been (conjectured for all finite $W$ and) shown by Digne and the author \cite{DG} for every irreducible Coxeter system except for type $D_n$. Baumeister and the author \cite{BG} proved it in type $D_n$. Independently, Licata and Queffelec~\cite{LQ} gave a completely different proof in types $A_n$, $D_n$ and $E_n$. In \cite{DG}, \cite{BG}, the proof requires topological realizations of Artin groups (in the classical types): Mikado braids are characterized topologically and the proof uses this characterization. In~\cite{LQ}, the proof uses homological properties of categorifications of Mikado braids (by bounded complexes in a suitable homotopy category). 

Mikado braids indeed seem to have remarkable properties in $2$-braid groups: in \cite{Twisted}, it is shown that the Rouquier complex of a Mikado braid (attached to a non necessarily finite Coxeter group) lies in the heart of the canonical $t$-structure on the bounded homotopy category of Soergel bimodules (Theorem~\ref{thm:gobet} in the general case is a consequence of this property). In \cite{LQ}, Licata and Queffelec show a similar property in a bounded homotopy category of projective modules over a zigzag algebra, in types $A_n$, $D_n$ and $E_n$.  

\begin{theorem}\label{simple_mikado}
Let $W$ be a finite Coxeter group, $c\in\Std(W,S)$, $x\in\NC(W,c)$. The simple dual braid $x_c\in B_c^*$ is a Mikado braid. 
\end{theorem}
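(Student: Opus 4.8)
The plan is to deduce Theorem~\ref{simple_mikado} from Theorem~\ref{thm:main}, which in turn rests on Lemma~\ref{lem:casparcasbis}. So the real content is the inductive proof of Theorem~\ref{thm:main}: for $x \in \NC(W,c)$ and $y := x^{-1}c$, we want $x_c = x_{\Read^{-1}(y)}$, and since the right-hand side is a Mikado braid by definition, Theorem~\ref{simple_mikado} is then immediate. I would argue by induction on the rank of $W$ together with the reflection length $\ell_T(x)$ (equivalently, on $\sum$ over the Cambrian recurrence), using Lemmatas~\ref{lem:read1}, \ref{lem:read2}, \ref{dual1}, \ref{dual2} to run the two cases of the Cambrian recurrence simultaneously for all standard Coxeter elements.

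\textbf{Base cases and setup.} Pick $s \in S$ initial in $c$. If $x = e$ then both sides are trivial. Otherwise, the dichotomy is whether $s \in N(x)$ or not; equivalently (Remark~\ref{rmq:refor}), whether $x^{-1}sx \in N(y)$ or not. \emph{Case 1: $s \notin N(x)$.} Then $x \in \NC(W_{<s>}, sc)$ (one needs here that $x \leq_T c$ with $s$ initial and $s \notin N(x)$ forces $x$ into the parabolic $W_{<s>}$; this is standard for noncrossing partitions and follows from Carter's Theorem~\ref{carter} applied to a $T$-reduced decomposition of $x$). By Lemma~\ref{dual1}(2), $x_c = x_{sc}$ computed in $B_{sc}^*$. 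On the sortable side, by Lemma~\ref{lem:read1}, $\Read^{-1}(y)$ restricted to $W_{<s>}$ agrees with the $(sc)$-sortable element $\Read_{sc}^{-1}(y)$, and one checks $s \notin N(\Read^{-1}(y))$ so that the leading $\mathbf{s}^{\pm}$ in the Mikado word does not appear; hence $x_{\Read^{-1}(y)} = x_{\Read_{sc}^{-1}(y)}$ computed inside $B(W_{<s>})$. The induction hypothesis in $W_{<s>}$ closes this case, provided one verifies $y$ (hence $\Read^{-1}(y)$) really lives in the parabolic, which is where Lemma~\ref{lem:casparcasbis} enters. \emph{Case 2: $s \in N(x)$.} Then one writes $x = s \cdot (sx)$ with $\ell_T(sx) < \ell_T(x)$ is \emph{not} quite right ($s$ need not be a factor of a $T$-reduced word for $x$); instead I would pass to $sxs \in \NC(W, scs)$ and apply Lemma~\ref{dual2}: $(sxs)_{scs} = \mathbf{s}^{-1} x_c \mathbf{s}$, so $x_c = \mathbf{s}\,(sxs)_{scs}\,\mathbf{s}^{-1}$. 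On the Mikado side, one uses that $x_{N(y)} = \mathbf{s}\,(sxs)_{N(y')}\,\mathbf{s}^{-1}$ for the appropriate $y'$ under conjugation by $s$, combined with Lemma~\ref{lem:read2} describing how $\Read^{-1}$ transforms under $x \mapsto sxs$. Matching $(sxs)^{-1}(scs) = s y s$ with $\Read_{scs}^{-1}(sys)$ via the induction hypothesis (now in $W$ but with strictly smaller $\ell_T$) finishes.

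\textbf{The bookkeeping that glues the two cases.} In both cases the subtle point is the \emph{compatibility of inversion sets}: in Case 1 one must know $x^{-1}sx \notin N(y)$ exactly when $x^{-1}sx \notin N(\Read^{-1}(y))$, so that the sign $\varepsilon$ assigned to the letter $s$ (and, recursively, to every letter) by the rule "$\dots \in N(\Read^{-1}(y))$" coincides with what the dual-braid recursion produces. This is precisely Lemma~\ref{lem:casparcasbis}, which is why the whole argument is case-free \emph{modulo} that one lemma. I would state the induction so that at each step we peel off one initial letter $s$, invoke Lemma~\ref{lem:casparcasbis} to fix the sign $\varepsilon$ of that letter, and then recurse; the point of phrasing the lemma in terms of $N(y)$ versus $N(\Read^{-1}(y))$ rather than in terms of the full Mikado words is exactly that a single-letter comparison suffices because the Mikado rule is "local" (the sign of the $i$-th letter depends only on membership of one specific reflection in the set).

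\textbf{Main obstacle.} The genuine difficulty is \emph{not} in the inductive skeleton above, which is a fairly mechanical Cambrian recurrence, but in Lemma~\ref{lem:casparcasbis} itself — the assertion that the inversion set $N(y)$ and the inversion set $N(\Read^{-1}(y))$ contain the \emph{same} reflections of the form $x^{-1}sx$ (with $s$ initial). Since $y$ and $\Read^{-1}(y)$ are very different elements of $W$ (one a noncrossing partition, the other a sortable element), there is no soft reason for their inversion sets to agree on these particular reflections, and the paper explicitly says this is the only step requiring the classification. So the honest statement is: the proof of Theorem~\ref{simple_mikado} reduces, via Theorem~\ref{thm:main} and the recurrence above, to Lemma~\ref{lem:casparcasbis}, whose verification is carried out type-by-type in Section~\ref{explicit} using the explicit description of $\Read^{-1}$ in terms of the noncrossing-partition models. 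Granting that lemma, the deduction of Theorem~\ref{simple_mikado} is then purely formal: $x_c = x_{\Read^{-1}(y)}$ is a Mikado braid because the right-hand side is one by construction.
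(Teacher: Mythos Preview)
Your overall strategy --- deduce Theorem~\ref{simple_mikado} from Theorem~\ref{thm:main}, and prove the latter by Cambrian recurrence with Lemma~\ref{lem:casparcasbis} as the key case-by-case input --- is exactly the paper's. However, your execution of the recurrence has concrete errors that prevent the induction from closing.

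First, your induction quantity is wrong. In Case~2 you pass from $(x,c)$ to $(sxs, scs)$ and claim ``strictly smaller $\ell_T$'', but conjugation preserves reflection length: $\ell_T(sxs) = \ell_T(x)$ and likewise $\ell_T(sys)=\ell_T(y)$. So as stated the recursion need not terminate. The paper instead inducts on the \emph{classical} length $\ell(S_c(y))$ of the sortable element (together with rank): this strictly decreases whenever $s\,S_c(y) < S_c(y)$ by Lemma~\ref{lem:read2}, and rank drops when $s\,S_c(y) > S_c(y)$.

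Second, your case split is backwards and incomplete. In your Case~1 ($s \notin N(x)$) you want $x \in W_{<s>}$ and then assert ``$y$ \dots\ really lives in the parabolic''; but if $x \in W_{<s>}$ then $y = x^{-1}c \notin W_{<s>}$, since $c \notin W_{<s>}$ and $x^{-1}\in W_{<s>}$. The paper's parabolic reduction runs the other way: it is triggered by $s\,S_c(y) > S_c(y)$, which (via Lemma~\ref{lem:read1}) forces $S_c(y)$ and hence $y$ --- not $x$ --- into $W_{<s>}$; one then gets $sx \in W_{<s>}$ and in particular $s \in N(x)$, the opposite of your hypothesis. Moreover, when $s\,S_c(y) < S_c(y)$ the paper must distinguish whether $s$ is a cover reflection of $S_c(y)$ (yielding $\Read_{scs}(s\,S_c(y)) = ys$) or not (yielding $sys$); these two subcases require genuinely different manipulations on both the dual-braid side and the Mikado side, and the cover-reflection subcase relies on an additional uniform ingredient, Lemma~\ref{lem:camb}, which your sketch does not invoke. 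A two-case dichotomy on $N(x)$ does not detect this distinction, so even with a corrected induction quantity your scheme would not match up the two sides in all cases.
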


An immediate consequence of this Theorem and Theorems~\ref{thm:gobet} and \ref{thm:caract} is: 

\begin{cor}\label{dual_kl_positive}
Let $W$ be a finite Coxeter group, $c\in\Std(W,S)$, $x\in\NC(W,c)$. Then $$\varphi(x_c)\in\sum_{w\in W}\mathbb{Z}[v^{\pm 1}] C_w.$$
\end{cor}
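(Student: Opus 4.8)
The plan is to prove Theorem~\ref{simple_mikado} by combining the main formula (Theorem~\ref{thm:main}) with the very definition of Mikado braids. Indeed, if we grant Theorem~\ref{thm:main}, then for $x\in\NC(W,c)$ and $y:=x^{-1}c\in\NC(W,c)$ we have $x_c=x_{\Read^{-1}(y)}=x_{N(\Read^{-1}(y))}$, which is visibly of the form $x_{N(z)}$ for $z:=\Read^{-1}(y)=S_c(y)\in W$, hence a Mikado braid by definition. So the real content is to establish Theorem~\ref{thm:main}, and I would structure the argument as a Cambrian induction on the pair $(\rank(W),\ell(x))$ (equivalently on $\ell_S$ of the $c$-sorting data), using Lemmatas~\ref{lem:read1} and \ref{lem:read2} together with the base-change Lemmatas~\ref{dual1} and \ref{dual2} for dual braid monoids.

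First I would fix $s\in S$ initial in $c$ and split according to whether $s\in N(x)$ or not. If $s\notin N(x)$, then $\ell(sx)>\ell(x)$; one checks that $x\in\NC(W_{<s>},sc)$, so by Lemma~\ref{dual1} the simple dual braid $x_c$ equals $x_{sc}$ computed inside $B(W_{<s>})$, and by Lemma~\ref{lem:read1} the sortable element $S_c(y)$ lives in the parabolic $W_{<s>}$ as well (here $y=x^{-1}c$); by induction on the rank, $x_{sc}=x_{N_{W_{<s>}}(S_{sc}(y))}$, and one must check that the inversion set computed in $W_{<s>}$ agrees with the one in $W$ as far as the roots occurring in a reduced word for $x$ are concerned — this is where the cover-reflection description in Theorem~\ref{thm_reading} and the compatibility of $\Read$ with passing to $W_{<s>}$ enter. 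If instead $s\in N(x)$, then after conjugating by $\mathbf s$: by Lemma~\ref{dual2} we have $(sxs)_{scs}=\mathbf s^{-1}x_c\mathbf s$, and $\ell_S$ of the relevant $c$-sortable data drops, so the induction hypothesis applies to $scs$, $sxs$, $y':=(sxs)^{-1}(scs)=s y s$; translating back through conjugation by $\mathbf s$ and through the compatibility of $\Read$ with $s\cdot(-)\cdot s$ (Lemma~\ref{lem:read2}) yields the formula for $x_c$. The glue that makes the two cases fit together is precisely Lemma~\ref{lem:casparcasbis}: it guarantees that the exponent $\varepsilon$ attached to the \emph{first} letter $s$ in the lift of an $S$-reduced expression of $x$ is the same whether one reads it off from $N(y)$ or from $N(S_c(y))$, which is exactly the compatibility needed when one peels off the initial letter in the induction.

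The base cases are immediate: if $x=e$ then $x_c=1=x_{N(\Read^{-1}(e))}$ since $\Read^{-1}(e)=e$ and $N(e)=\emptyset$ (Lemma~\ref{mik_basic}(2)); if $x=c$ then $y=e$ and $x_c=c_c=\mathbf s_1\cdots\mathbf s_n=\mathbf x=x_{N(e)}$ again by Lemma~\ref{mik_basic}(2). More generally the case $x\in S$ is handled directly by the computations in the proofs of Lemmatas~\ref{dual1} and \ref{dual2}. With Theorem~\ref{thm:main} in hand, Corollary~\ref{dual_kl_positive} is then purely formal: by Theorem~\ref{simple_mikado} the braid $x_c$ is Mikado, so by Theorem~\ref{thm:caract} it can be written as $\mathbf a\mathbf b^{-1}$ for suitable $a,b\in W$, and then Theorem~\ref{thm:gobet} gives $\varphi(x_c)=T_aT_b^{-1}\in\sum_{w}\mathbb Z_{\geq0}[v^{\pm1}]C_w\subseteq\sum_w\mathbb Z[v^{\pm1}]C_w$.

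The main obstacle I expect is not the skeleton of the induction but the two bookkeeping points that feed it: (i) verifying that Reading's bijection $\Read$ is genuinely compatible with the Cambrian recursions in the strong sense that $\Read$ intertwines the parabolic restriction $\Sort_c(W)\supseteq\Sort_{sc}(W_{<s>})$ with the inclusion $\NC(W_{<s>},sc)\subseteq\NC(W,c)$, and likewise intertwines $w\mapsto sw$ on the sortable side with $y\mapsto s y s$ on the noncrossing-partition side — this requires a careful analysis of how cover reflections and the total order on $N(w)$ behave under these operations; and (ii) Lemma~\ref{lem:casparcasbis} itself, whose proof (postponed to Section~\ref{explicit}) genuinely needs the classification and the explicit description of $S_c=\Read^{-1}$ in each classical type. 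Everything else — the manipulations in $B(W)$, the interplay of $N(x)$, $N(y)$, $N(xy)$ under symmetric difference (Remark~\ref{rmq:refor}), and the reduction of the general $x$ to the reflection case — is routine once those two ingredients are secured.
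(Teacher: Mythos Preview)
Your final paragraph correctly derives the Corollary from Theorems~\ref{simple_mikado}, \ref{thm:caract}, and \ref{thm:gobet}, and this is exactly the paper's (one-sentence) argument; nothing more is needed for Corollary~\ref{dual_kl_positive}.

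The bulk of your proposal, however, is a sketch of a proof of Theorem~\ref{thm:main}, and there your case split is genuinely wrong. You split on whether $s\in N(x)$ and assert that if $s\notin N(x)$ then $x\in\NC(W_{<s>},sc)$. This fails already in type $A_3$: take $c=s_2s_1s_3=(1,3,4,2)$, $s=s_2$ initial in $c$, and $x=(1,4)\in\NC(W,c)$. Then $x$ has one-line notation $4\,2\,3\,1$ (length $5$) while $s_2x$ has one-line notation $4\,3\,2\,1$ (length $6$), so $s_2\notin N(x)$; yet $(1,4)\notin W_{\langle s_1,s_3\rangle}$. The dichotomy ``$sw>w\Rightarrow w\in W_{<s>}$'' which drives the Cambrian recurrence holds for $c$-sortable elements (Lemma~\ref{lem:read1}) but has no analogue for noncrossing partitions.

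This is why the paper's induction runs on $\ell(S_c(y))$ and the rank, splitting on whether $s$ is a left descent of $S_c(y)$ (not of $x$), with a further subdivision according to whether $s$ is a cover reflection of $S_c(y)$. The three resulting cases behave quite differently: in the first, one passes to $W_{<s>}$; in the second, one uses $\Read_{scs}(sS_c(y))=ys$ and the fact that $sxs\in W_{<s>}$; only in the third does one use $\Read_{scs}(sS_c(y))=sys$ together with Lemma~\ref{lem:casparcasbis} (and Lemma~\ref{lem:camb}) to control the sign on the peeled-off letter. Your two-case scheme collapses distinctions that the argument actually needs.
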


%The fact that Mikado braids seem to play an interesting role in the categorified versions of the Artin groups is not their only representation theoretic feature. They also have the property that their images in the corresponding Iwahori-Hecke algebra has positive coefficient when expressed in the canonical Kazhdan-Lusztig basis (se~\cite{Twisted}). 

\subsection{A closed formula for simple dual braids}

Expressing simple dual braids in the classical Artin generators is a fundamental problem. In light of Theorem~\ref{simple_mikado}, given $x\in\NC(W,c)$, there is $y\in W$ such that the simple dual braid $x_c$ is equal to $x_{N(y)}$. In principle, expressing simple dual braids in the classical Artin generators requires an inductive application of dual braid relations, and no closed formula is known (some of the proofs of Theorem~\ref{simple_mikado} mentioned in Section~\ref{sdbmik} provide an algorithm to write a simple dual braid as a Mikado braid, but no closed formula). In general, there might be several $y$ such that $x_c=x_{N(y)}$, but it is natural to wonder if there is a natural one. The following Theorem, which is the main result of this paper, gives an answer to this question, and also provides a new (entirely combinatorial proof) of Theorem~\ref{simple_mikado}:

\begin{theorem}\label{thm:main}
Let $W$ be a finite Coxeter group, $c\in\Std(W,S)$, $x\in\NC(W,c)$. Let $y=x^{-1}c$ be the right Kreweras complement of $x$. Then $x_c= x_{N(S_c(y))}$. 
\end{theorem}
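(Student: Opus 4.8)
The plan is to prove Theorem~\ref{thm:main} by induction on $\ell(c) = n = |S|$ together with $\ell_S(x)$, using the Cambrian recurrence (Lemmatas~\ref{lem:read1} and \ref{lem:read2}) on the $c$-sortable side and the inductive structure of dual braid monoids (Lemmata~\ref{dual1} and \ref{dual2}) on the braid side, with Lemma~\ref{lem:casparcasbis} as the crucial input that makes the two inductions compatible. Fix $s \in S$ initial in $c$. There are two cases depending on whether $s \in N(x)$ or not, equivalently (by Remark~\ref{rmq:refor}) whether $x^{-1}sx \in N(y)$ or not. In the first case one expects $x$ to ``live in the parabolic $W_{<s>}$'' after an appropriate conjugation/reduction, so that the statement follows from the rank-$(n-1)$ case via Lemma~\ref{dual1}; in the second case $s$ should be a left inversion of something and one reduces by conjugating by $\mathbf{s}$, using Lemma~\ref{dual2}, to a Coxeter element $scs$ of the same rank but with a shorter relevant element, so the induction on $\ell_S(x)$ (or on $\ell_T(x)$) kicks in.

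First I would set up the base case: when $n = 0$ everything is trivial, and when $x = e$ both sides are the identity of $B(W)$ (here $y = c$, $S_c(c)$ is the longest $c$-sortable element, and $x_{N(S_c(c))} = e_{N(\cdots)} = e$ by definition since a reduced expression of $e$ is empty). For the inductive step with $s$ initial in $c$, I would first record the key Garside-theoretic identities. On one hand, $c_c = \mathbf{s}\,((sc)_{<s>})_{c_{<s>}}$-type factorizations let me peel off $s$; more precisely, using that $x_c$ depends only on a $T$-reduced word for $x$ and that $c = s \cdot (sc)$ with $sc$ a Coxeter element of $W_{<s>}$ (when $s \notin N(x)$ one arranges $x \leq_T sc$, landing in $\NC(W_{<s>}, sc)$). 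On the other hand, on the Mikado side I would use the recursive behaviour of the assignment $x \mapsto x_{N(y)}$ under peeling off an initial/final letter of a reduced word of $x$, together with Lemma~\ref{mik_basic}. The heart of the argument is to show that peeling off $s$ on both sides is governed by the \emph{same} sign $\varepsilon$, and this is exactly the content of Lemma~\ref{lem:casparcasbis}: the sign attached to the relevant first (or last) letter in the definition of $x_{N(S_c(y))}$ is determined by whether $x^{-1}sx \in N(S_c(y))$, which by Lemma~\ref{lem:casparcasbis} happens iff $x^{-1}sx \in N(y)$, which is precisely the condition that controls the Garside-side peeling (whether the factorization of $x_c$ in terms of $s$ uses $\mathbf{s}$ or $\mathbf{s}^{-1}$).

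Concretely, in the case $s \notin N(x)$: here $x^{-1}sx \notin N(y)$, so by Lemma~\ref{lem:casparcasbis} also $x^{-1}sx \notin N(S_c(y))$. On the braid side, $x \in \NC(W_{<s>}, sc)$ and $y' := x^{-1}(sc) = x^{-1}c \cdot s^{-1}\cdots$ — more carefully $y = x^{-1}c$ so the right Kreweras complement of $x$ inside $W_{<s>}$ with respect to $sc$ is $x^{-1}(sc) = ys^{-1}$-ish; one needs the clean statement that $S_{sc}^{W_{<s>}}(x^{-1}sc) = S_c^W(y)$ when $s \notin N(x)$ via Lemma~\ref{lem:read1} (sortable elements of $W_{<s>}$ w.r.t. $sc$ are exactly the $s$-reduced sortable elements of $W$ w.r.t. $c$, and $\Read$ commutes with this restriction by the description in Theorem~\ref{thm_reading}). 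Then Lemma~\ref{dual1}(2) gives $x_c = x_{sc}$ in $B(W_{<s>}) \subseteq B(W)$, and one applies the inductive hypothesis in rank $n-1$; finally one checks $x_{N(S_{sc}(x^{-1}sc))}$ computed inside $B(W_{<s>})$ equals $x_{N(S_c(y))}$ inside $B(W)$ — this is where one uses that no letter equal to $s$ appears in a reduced word of $x$, so the extra inversions of $S_c(y)$ involving $s$ are irrelevant. In the case $s \in N(x)$: by Lemma~\ref{lem:casparcasbis}, $x^{-1}sx \in N(y) \Leftrightarrow x^{-1}sx \in N(S_c(y))$, and now a reduced word of $x$ begins with $s$, say $x = s x'$ with $\ell_S(x') < \ell_S(x)$; conjugating, $sxs \in \NC(W, scs)$ with right Kreweras complement $sys$, and by Lemma~\ref{dual2}, $x_c = \mathbf{s}\cdot (x' \cdot s)_{?}\cdots$ — more precisely I would relate $x_c$ to $(s^{-1}xs)_{scs}$ or similar and use the inductive hypothesis for $scs$ (same rank, but applied to an element of smaller $S$-length, or one inducts on $\ell_T$); on the Mikado side one uses the recursion $x_{N(S_c(y))} = \mathbf{s}^{\varepsilon} (x')_{N(\cdots)}$ where $\varepsilon$ is dictated by $x^{-1}sx \in N(S_c(y))$, matched with the Garside peeling by Lemma~\ref{lem:casparcasbis} and Remark~\ref{rmq:refor}.

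The main obstacle I expect is the bookkeeping in the case $s \in N(x)$: one must track exactly how the right Kreweras complement and the sortable element transform under $x \mapsto s^{-1}xs$ and under peeling the initial $s$, and verify that the recursion for $z \mapsto z_{N(w)}$ (peeling an initial letter of $z$, which is the \emph{last} letter in the product defining the $\varepsilon_i$'s) matches the recursion for $z \mapsto z_c$ coming from Lemma~\ref{dual2}, with the sign coming out right via Lemma~\ref{lem:casparcasbis}. A secondary subtlety is making sure the two-variable induction (on $n$, and on $\ell_S(x)$ or $\ell_T(x)$) is genuinely well-founded: the $s \notin N(x)$ case drops $n$, the $s \in N(x)$ case keeps $n$ but must strictly decrease the second parameter, and one must check that the hypotheses of Lemmata~\ref{lem:read1}, \ref{lem:read2}, \ref{dual1}, \ref{dual2} are all met at each reduction step. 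Once these compatibilities are established, the theorem follows, and since $S_c$ and $\Read$ are defined uniformly, the only case-by-case ingredient is the already-isolated Lemma~\ref{lem:casparcasbis}; in particular this also re-proves Theorem~\ref{simple_mikado} combinatorially, with the explicit witness $y \mapsto S_c(x^{-1}c)$.
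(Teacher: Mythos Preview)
Your high-level plan---Cambrian recurrence on the sortable side, Lemmata~\ref{dual1} and~\ref{dual2} on the braid side, with Lemma~\ref{lem:casparcasbis} as the bridge---matches the paper's strategy. But the execution has a genuine gap: the dichotomy ``$s\in N(x)$ versus $s\notin N(x)$'' is the wrong case split, and your claim that $s\notin N(x)$ forces $x\in\NC(W_{<s>},sc)$ is false. Take $W=\mathfrak{S}_4$, $c=s_2 s_1 s_3=(1,3,4,2)$, $s=s_2$ initial, $x=(1,4)\in\NC(W,c)$: then $x^{-1}(2)=2<3=x^{-1}(3)$ so $s_2\notin N(x)$, yet $(1,4)\notin\langle s_1,s_3\rangle=W_{<s_2>}$. (You also invert Remark~\ref{rmq:refor}: $s\notin N(x)$ is equivalent to $x^{-1}sx\in N(y)$, not to $x^{-1}sx\notin N(y)$.) The paper's case analysis is on $S_c(y)$ rather than on $x$: either $sS_c(y)>S_c(y)$, whence $y\in W_{<s>}$ and therefore $sx\in W_{<s>}$, allowing rank reduction via Lemma~\ref{dual1}; or $sS_c(y)<S_c(y)$, further split by whether $s$ is a cover reflection of $S_c(y)$ (this decides whether $\Read_{scs}(sS_c(y))$ equals $ys$ or $sys$). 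In the example above $y=(1,3)(2,4)\notin W_{<s_2>}$, so one is \emph{not} in a rank-reduction situation even though $s\notin N(x)$.

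A second issue is the induction variable. In the paper's two cases with $sS_c(y)<S_c(y)$ one passes to the Coxeter element $scs$ and to $sS_c(y)=S_{scs}(ys)$ or $S_{scs}(sys)$; what strictly drops is $\ell(S_c(y))$, not $\ell_S(x)$ or $\ell_T(x)$ (in the third case one compares $x$ with $sxs$, whose $S$-length need not be smaller and whose $T$-length is unchanged). The paper also needs an auxiliary result, Lemma~\ref{lem:camb} (if $t\leq_T x$ then $t\notin N(S_c(y))$), itself proved by the same recurrence, to handle the cover-reflection subcase; your sketch does not isolate this ingredient. Once you shift the case analysis to $S_c(y)$ and induct on $\ell(S_c(y))$, the argument becomes the paper's proof.
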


In other words, the suitable element by which one has to twist $x$ to get the simple dual braid $x_c$ is the $c$-sortable element corresponding to the Kreweras complement $y$ of $x$. Note that it also shows that $x_c$ is actually obtained by lifting an (in fact, any) $S$-reduced expression of $x$, hence that the length of $x_c$ in $B(W)$ (with respect to $\mathbf{S}\cup \mathbf{S}^{-1}$) is equal to the length of its image $x=p(x_c)$ in $W$, which is not clear at all from the definition of dual braid monoids. 

The rest of the section is devoted to proving Theorem~\ref{thm:main}, assuming Lemma~\ref{lem:casparcasbis} (which will be proven in Section~\ref{explicit} using a case-by-case argument). The strategy is to take advantage of the nice properties of $c$-sortable elements given in Lemmatas~\ref{lem:read1} and \ref{lem:read2}. That is, the proof is by induction on the classical length of $S_c(y)$ and the rank of $W$, where we allow $c$ to vary (i.e., the statement is proven simultaneously for \textit{all} Coxeter elements). We start by proving the following Lemma: 

\begin{lem}\label{lem:camb}
Let $x\in \NC(W,c)$. Let $y=x^{-1}c$. Let $t\in T$ such that $t\leq_T x$. Then $t\notin N( S_c(y))$. 
\end{lem}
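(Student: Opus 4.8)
The plan is to argue by induction on the rank of $W$ together with the length $\ell(S_c(y))$, varying the Coxeter element $c$ as usual, and to exploit the Cambrian recurrence (Lemmatas~\ref{lem:read1} and \ref{lem:read2}) together with the recursive description of $\Read$ implicit in Theorem~\ref{thm_reading}. First I would dispose of the base case: if $\ell(S_c(y))=0$ then $S_c(y)=e$, so $N(S_c(y))=\emptyset$ and there is nothing to prove. For the inductive step, pick $s\in S$ initial in $c$; this means $\ell(sc)<\ell(c)$, and one of two situations occurs according to whether $s\in N(x)$ or not. By Remark~\ref{rmq:refor}, $s\in N(x)$ is equivalent to $x^{-1}sx\notin N(y)$, i.e.\ to $\ell(s\cdot S_c(y))>\ell(S_c(y))$ once we know (via Lemma~\ref{lem:casparcasbis}) that membership of $x^{-1}sx$ in $N(y)$ matches membership in $N(S_c(y))$; but in fact Lemma~\ref{lem:camb} is used in the proof of the main theorem rather than depending on Lemma~\ref{lem:casparcasbis}, so I would instead read off directly from the combinatorics of the $c$-sorting word whether $s\in N(S_c(y))$.

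The key dichotomy is therefore: \emph{Case 1}, $s\notin N(S_c(y))$, i.e.\ $\ell(s\cdot S_c(y))>\ell(S_c(y))$. Then by Lemma~\ref{lem:read1}, $S_c(y)$ lies in $\Sort_{sc}(W_{<s>})$, and one checks that $\Read$ restricted to $W_{<s>}$ with Coxeter element $sc$ agrees with the ambient $\Read$ on such elements, so $y\in\NC(W_{<s>},sc)$ and $S_{sc}^{W_{<s>}}(y)=S_c(y)$. Moreover $x^{-1}c=y$ forces $x$ to be a prefix of $c$ whose Kreweras complement is $y$; since $y\in W_{<s>}$, writing $c=sc'$ with $c'=sc\in\Cox(W_{<s>},\cdot)$ extended by $s$, one shows $x=s\cdot(\text{prefix in }W_{<s>})$ or $x$ itself lies in $W_{<s>}$. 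In either sub-situation any $t\leq_T x$ with $t\in W_{<s>}$ is handled by the inductive hypothesis applied inside $W_{<s>}$ (rank drops), and the only new reflection to worry about is one involving $s$; but such a $t$ has $s\in\mathrm{supp}$ and one checks $t\notin N(S_c(y))$ because $S_c(y)\in W_{<s>}$ has all its inversions supported away from $s$ in the relevant sense — more precisely $\alpha_t$ is not in the inversion set of $S_c(y)$ since $\ell(s\cdot S_c(y))>\ell(S_c(y))$ and $s$ is initial. \emph{Case 2}, $s\in N(S_c(y))$, i.e.\ $\ell(s\cdot S_c(y))<\ell(S_c(y))$. By Lemma~\ref{lem:read2}, $s\cdot S_c(y)\in\Sort_{scs}(W)$, and I would show $S_{scs}(sys) = s\cdot S_c(y)$ using that the $c$-sorting word of $S_c(y)$ begins with $s$; correspondingly, using Lemma~\ref{dual2}-type conjugation behaviour on $\NC$, $sys=s(x^{-1}c)s=(sxs)^{-1}(scs)$, and $sxs\in\NC(W,scs)$ has right Kreweras complement $sys$. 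Now $\ell(S_{scs}(sys))<\ell(S_c(y))$, so the induction hypothesis (same rank, smaller length) gives: for $t'\leq_T sxs$, $t'\notin N(S_{scs}(sys))=N(s\cdot S_c(y))$. Given $t\leq_T x$, set $t'=sts\leq_T sxs$; then $t'\notin N(s\cdot S_c(y))$, and I translate this back to $t\notin N(S_c(y))$ via the standard identity $N(S_c(y)) = \{s\}\,+\,s\,N(s\cdot S_c(y))\,s$ (symmetric difference), noting $t\ne s$ since $t\leq_T x$ and — if $s\leq_T x$ we would need a separate check, but in Case 2 one has $s\in N(x)$ is false, i.e.\ $s\notin N(x)$, hence $s\not\leq_T x$... this boundary point needs care.

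The main obstacle I anticipate is precisely this bookkeeping at the "boundary" reflections — keeping straight, in each case, exactly which reflections $t\leq_T x$ can coincide with or involve the initial generator $s$, and verifying the symmetric-difference identity relating $N(S_c(y))$, $N(s\cdot S_c(y))$ and their conjugates is applied with the correct conjugating element. A secondary technical point is justifying that $\Read$ "commutes" with the parabolic restriction in Case 1 and with the conjugation $w\mapsto sws$ in Case 2 at the level of the \emph{whole} $c$-sortable element (not just that the domains match): this should follow by unwinding the definition of $\Read$ via cover reflections and the total order on $N(w)$ induced by the $c$-sorting word, but it is the kind of statement that is "clear" from the Cambrian recurrence yet needs a careful sentence. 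Once these are in place, the induction closes cleanly, and Lemma~\ref{lem:camb} feeds into the proof of Theorem~\ref{thm:main} by guaranteeing that the sign $\varepsilon_i$ attached to any reflection below $x$ is $+1$, which is what makes the lifted word actually compute the simple dual braid.
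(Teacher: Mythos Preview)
Your overall strategy---Cambrian recurrence on $(\mathrm{rank}\,W,\ell(S_c(y)))$---is exactly the paper's, but Case~2 has a structural gap and an incorrect step.

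\textbf{The missing sub-case.} When $sS_c(y)<S_c(y)$ you assert $S_{scs}(sys)=sS_c(y)$. This holds only when $s$ is \emph{not} a cover reflection of $S_c(y)$; when $s$ \emph{is} a cover reflection one has instead $\Read_{scs}(sS_c(y))=ys$ (Reading, Lemma~6.5), and the induction must be run with the pair $(sx,ys)$ in $\NC(W,scs)$, not $(sxs,sys)$. In that sub-case $s\leq_T y$, hence $x\leq_T sx$, so $t\leq_T x$ gives $t\leq_T sx$ directly; moreover $N(sS_c(y))=N(S_c(y))\setminus\{s\}$ with \emph{no} conjugation by $s$, so the translation back is different from what you wrote. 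The two sub-cases genuinely require different arguments and cannot be merged.

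\textbf{The $t\neq s$ step.} Your claim ``in Case~2 one has $s\in N(x)$ is false'' is not a valid deduction: Case~2 means $s\in N(S_c(y))$, which says nothing about $N(x)$. In the non-cover-reflection sub-case the paper argues instead: if $t=s\leq_T x$ then $sx\in W_{<s>}$ (Reading, Lemma~5.2), whence $y=(sx)^{-1}(sc)\in W_{<s>}$; but then all cover reflections of $S_c(y)$ lie in $W_{<s>}$, forcing $S_c(y)\in W_{<s>}$ and contradicting $sS_c(y)<S_c(y)$.

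A smaller point in Case~1: to apply the inductive hypothesis inside $W_{<s>}$ you need $t\leq_T sx$, not merely $t\in W_{<s>}$. The paper obtains this by a fixed-point-space argument (Theorem~\ref{carter}): if $t\leq_T x$ but $t\not\leq_T sx$, one shows $V^{sx}=L\oplus V^x$ with $L=V^{W_{<s>}}$ and deduces $t\notin W_{<s>}$, hence $t\notin N(S_c(y))$ since $S_c(y)\in W_{<s>}$. Your sketch gestures at this but does not supply it.
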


\begin{proof}
We argue by induction on the classical length of $S_c(y)$ and the rank of $W$. Let $s$ be initial in $c$. If $W$ has rank $0$ or $1$ then the claim is obvious. 

Firstly, assume that $s S_c(y) > S_c(y)$. By Lemma~\ref{lem:read1}, it implies that $S_c(y)\in W_{<s>}$ and that $S_c(y)$ is $sc$-sortable. It follows that $sxy=sc$ with $\lt(sx)+\lt(y)=\lt(sc)$. If $t\leq_T sx$ then the claim holds by induction on the rank of $W$. Hence assume that $t\not\leq_T sx$. Since $t\leq_T x$, we have that $t$ fixes $V^x$ by Theorem~\ref{carter}. Assume that $t\in W_{<s>}$. Then $t$ fixes $L:=V^{W_{<s>}}$, which is a line. We have that $\dim (V^x)=\dim(V^{sx})-1$. We claim that $L\cap V^x=0$. Otherwise, we have $L\subseteq V^x= V^{sx}\cap V^s$, implying that $s$ fixes $L=V^{W_{<s>}}$, a contradiction because $L$ would be fixed by every element in $W$. Since $sx$ fixes both $L$ and $V^x$, we have that $V^{sx}=L\oplus V^x$. It follows that $t$ fixes $V^{sx}$, hence by Theorem~\ref{carter} again that $t\leq_T sx$, a contradiction. Hence $t\notin W_{<s>}$ which implies that $t\notin N(S_c(y))$ (because $S_c(y)\in W_{<s>}$).   

Now assume that $s S_c(y) < S_c(y)$. We first consider the case where $s$ is a cover reflection of $S_c(y)$. It implies that $s S_c(y)$ is $scs$-sortable and $S_{scs}^{-1} (s S_c(y))= ys$ (by Lemma~\ref{lem:read2} and \cite[Lemma 6.5]{Reading}). By induction on length, we have that for every $q\leq_T sx$, $$q\notin N( S_{scs} (ys))=N(s S_c(y))= N(S_c(y))\backslash\{ s\},$$ where the last equality holds because $s$ is a cover reflection of $S_c(y)$. Since $s$ is a cover reflection of $S_c(y)$ we have $s\leq_T y$, hence $ys\leq_T y$ and $x\leq_T sx$. It implies that $t\leq_T sx$, hence (as we have seen above) that $t\notin N( S_c(y)) \backslash\{ s\}$. Since $t\leq_T x$ and $s\not\leq_T x$ we have $t\neq s$. Hence $t\notin N( S_c(y))$. 

We now assume that $s$ is not a cover reflection of $S_c(y)$. Then $s S_c(y)$ is $scs$-sortable and $S_{scs}^{-1}(s S_c(y))=sys$ (by Lemma~\ref{lem:read2} and \cite[Lemma 6.5]{Reading}). We have $scs=(sxs)(sys)$ and $\ell_T(sxs)+\lt(sys)=\lt(scs)$. By induction on length, we have that for every $q\leq_T sxs$, $$q\notin N(s S_c(y))= s N( S_c(y)) s\backslash\{ s\}.$$

We have $t\leq_T x$, hence $sts\leq_T sxs$, hence $sts\notin s N(S_c(y))s \backslash\{s\}$. It implies that either $t\notin N( S_c(y))$, in which case we are done, or $t=s$. But since $t\leq_T x$, if $t=s$ we have that $sx\in W_{<s>}$ (see \cite[Lemma 5.2]{Reading} or use Carter's Lemma) and $sxy=sc$, implying that $y\in W_{<s>}$. In particular, $S_c(y)$ has all its cover reflections in $W_{<s>}$, implying that $S_c(y)\in W_{< s >}$ by \cite[Lemma 2.24]{RS}. This contradicts $s S_c(y) < S_c(y)$. Hence $t\neq s$, and $t\notin N(S_c(y))$.  
\end{proof}

\subsection{Proof of the main theorem}

\begin{proof}[Proof of Theorem~\ref{thm:main}]
We argue by Cambrian recurrence, i.e., by induction on the rank and length of $S_c(y)$ (as in the proof of Lemma~\ref{lem:camb}). If $\ell(y)=0$, then $x=c$, hence without assumption on the rank of $W$ we have that $x_{N(S_c(y))}=x_\emptyset=c_\emptyset=c_c$ (note that the case where the rank of $W$ is zero is enough to start the recurrence). Assume that $\ell(S_c(y))>0$. Let $s$ be initial in $c$.\\ 

\noindent\textbf{First case: $s S_c(y) > S_c(y)$.} By Lemma~\ref{lem:read1}, it implies that $S_c(y)\in W_{<s>}$ and that $S_c(y)$ is $sc$-sortable. We have $\Read_{sc}(S_c(y))=y\in W_{<s>}$ (hence $S_{sc}(y)=S_c(y)$) and therefore $sc=(sx) y$ with $\ell_T(sx)+\ell_T(y)=\ell_T(sc)$. By induction on the rank of $W$ we get that $$(sx)_{sc}=(sx)_{N(S_{sc}(y))}.$$ Note that $S_{sc}(y)=S_c(y)$. Since $sx\in W_{<s>}$ we have $(sx)_{sc}=(sx)_c$ by Lemma~\ref{dual1} (2). We have $sx \leq_T x$ which implies that $x_c=s_c (sx)_c= \mathbf{s} (sx)_c$. Hence to prove that $x_c=x_{N(S_c(y)}$, it suffices to show that $x_{N(S_c(y))}= \mathbf{s}(sx)_{N(S_c(y))}$. Since $sx\in W_{<s>}$ we have $x > sx$. Choosing a reduced expression of $x$ beginning by $s$ we get that $x_{N(S_c(y))}= \mathbf{s}^{\varepsilon} (sx)_{N(S_c(y))}$, where $\varepsilon=1$ if and only if $x^{-1} s x\notin N(S_c(y))$. But since $sx\in W_{<s>}$, we have $x^{-1} sx\notin W_{<s>}$, hence $x^{-1}sx\notin N(S_c(y))$ since $S_c(y)\in W_{<s>}$. This concludes in this case.\\

\noindent\textbf{Second case: $s S_c(y) < S_c(y)$ and $s$ is a cover reflection of $S_c(y)$.} By Lemma~\ref{lem:read2}, it implies that $s S_c(y)$ is $scs$-sortable. We have that $S_{scs}^{-1}(s S_c(y))= ys \leq_T y$ (see \cite[Lemma 6.5]{Reading} and its proof). We have $scs=(sx)(ys)$ with $\ell_T(scs)=\ell_T(sx)+\ell_T(ys)$. Note that $x\leq_T sx\leq_T scs$ since $ys\leq_T y$. We then have $sxs\leq_T xs\leq_T c$ which, by~\cite[Lemma~5.2]{Reading}, implies that $sxs\in W_{<s>}$. By induction on length we have that 
\begin{equation}\label{eq_1}
(sx)_{N(s S_c(y))}=(sx)_{N(S_{scs}(ys))}=(sx)_{scs}.
\end{equation}
We have to show that $x_{N(S_c(y))}=x_c$. Since $s$ is initial in $c$ and $sxs\in W_{<s>}$, by Lemmatas~\ref{dual1} and \ref{dual2} we obtain 
\begin{equation}\label{eq_2}
\mathbf{s}^{-1} x_c \mathbf{s}=(sxs)_{scs}=(sxs)_{sc}=(sxs)_c.
\end{equation}
Moreover, since $sxs\leq_T sx\leq_T scs$ we have that $(sx)_{scs}=(sxs)_{scs} \mathbf{s}$. Recall that $sxs\in W_{<s>}$, hence $sxs< sx, xs$. If $x < sx$ or $x<xs$, then $x=sxs$ by the exchange lemma. 

Firstly, assume that $x<sx$ (and hence $x=sxs$). Note that since $s$ is a cover reflection of $S_c(y)$, we have $N(s S_c(y))= N(S_c(y))\backslash\{ s\}$. It follows that $(sx)_{N(s S_c(y))}=(xs)_{N(s S_c(y))}= x_{N(S_c(y))}\mathbf{s}$. This concludes because in that case using also \eqref{eq_1} and \eqref{eq_2} we have $$x_c=(sxs)_c=(sxs)_{scs}=(sx)_{scs} \mathbf{s}^{-1}=(sx)_{N(s S_c(y))}\mathbf{s}^{-1}=x_{N(S_c(y))}.$$

Now assume that $sx, xs < x$. We have that $x\leq_T sx$, hence $x^{-1} s x\leq_T sx$. By Lemma~\ref{lem:camb} it implies that $x^{-1} s x\notin N(s S_c(y))$ (we apply the Lemma with $scs=(sx)(ys)$). It follows that $\mathbf{s} (sx)_{N(s S_c(y))}=x_{N(s S_c(y)}$. Lemma~\ref{dual2} then yields
\begin{equation}\label{eq_3}
x_c= \mathbf{s} (sxs)_{scs}\mathbf{s}^{-1}=\mathbf{s} (sx)_{scs} \mathbf{s}^{-1} \mathbf{s}^{-1}.
\end{equation}
By induction and \eqref{eq_3} we get $$x_c=x_{N(s S_c(y))} \mathbf{s}^{-2}=(xs)_{N(S_c(y))} \mathbf{s} \mathbf{s}^{-2}= (xs)_{N(S_c(y)} \mathbf{s}^{-1}= x_{N(S_c(y))},$$ where in the second and last equality we used that $xs < x$ and $N(sS_c(y))=N(S_c(y))\backslash\{ s\}$ (which implies that $t\in N(S_c(y))\backslash\{s\} \Leftrightarrow sts \in N(S_c(y))\backslash\{s\}$). This concludes in that case.\\

\noindent\textbf{Third case: $s S_c(y) < S_c(y)$ and $s$ is not a cover reflection of $S_c(y)$.} It implies that $s S_c(y)$ is $scs$-sortable by Lemma~\ref{lem:read2}. We have that $S_{scs}^{-1}(s S_c(y))= sys$ (see~\cite[Lemma 6.5]{Reading}). Since $(sxs)_{scs}=\mathbf{s}^{-1} x_c \mathbf{s}$, by induction on the length it suffices to show that $x_{N(S_c(y))}= \mathbf{s} (sxs)_{N(s S_c(y))} \mathbf{s}^{-1}$. 

Note that $(sxs)_{N(sS_c(y))}=(sx)_{N(S_c(y))}\mathbf{s}$ (this can be seen by a direct computation using the definition of Mikado braids and distinguishing the cases $sx<sxs$ and $sxs<sx$). Hence it suffices to show that $\mathbf{s}(sx)_{N(S_c(y))}=x_{N(S_c(y))}$. If $sx <x$, then $s\in N(x)$ and by Lemma~\ref{lem:casparcasbis} together with Remark~\ref{rmq:refor}  we have that $x^{-1} sx\notin N(S_c(y))$. It follows that $x_{N(S_c(y))}=\mathbf{s} (sx)_{N(S_c(y))}$. If $sx >x$, then $s\notin N(x)$ and by Lemma~\ref{lem:casparcasbis} and Remark~\ref{rmq:refor} we have that $x^{-1} sx\in N(S_c(y))$. It follows that $(sx)_{N(S_c(y))}= \mathbf{s}^{-1}x_{N(S_c(y))}$. Hence in both cases ($x<sx$ and $x>sx$) we get that $\mathbf{s}(sx)_{N(S_c(y))}=x_{N(S_c(y))}$, which concludes in this case.

 %$$\mathbf{s}^{-1} x_{N(S_c(y))} \mathbf{s}= (sx)_{N(s S_c(y))} \mathbf{s}^{-1}.$$ Using again that $N(sS_c(y))=N(S_c(y))\backslash\{s\}$, we have $$x_{N(S_c(y))}=(s (sxs) s)_{N(S_c(y))}=\mathbf{s}^{\varepsilon_1} (sxs)_{N(s S_c(y))} \mathbf{s}^{-1},$$
%Where $\epsilon_1=\pm 1$, with $\epsilon_1=1$ if and only if $x^{-1} s x\notin N(S_c(y))$. By Lemma~\ref{}
This completes the proof of Theorem~\ref{thm:main}. 
\end{proof}

\section{Bijections between noncrossing partitions and $c$-sortable elements}
\label{explicit}

The aim of this section is to give an explicit description of the inverse $S_c$ of Reading's bijection $\Read$ from noncrossing partitions to $c$-sortable elements (in the classical and dihedral types), and use it to prove Lemma~\ref{lem:casparcasbis}. We study each type in a different section. Types $A_n$ and $D_n$ are treated using the noncrossing partition models, while for type $B_n$ one can use the realization of the Coxeter group as the subgroup of a Coxeter group $W'$ of type $A_{2n-1}$ consisting of those elements which are fixed by the automorphism of $W'$ induced by the automorphism of the type $A_{2n-1}$ Dynkin diagram.  

\subsection{Type $A_n$}

\subsubsection{Graphical noncrossing partitions}\label{graph_a}

We identify $W$ with the symmetric group $\mathfrak{S}_{n+1}$ and $S$ with the set of simple transpositions. We denote by $s_i$ the simple transposition $(i, i+1)$, $i=1,\dots, n$. Let $c\in\Std(\Sn)$. The elements of $\NC(\Sn,c)$ can be represented by disjoint unions of polygons with vertices on a circle labeled by $[n+1]:=\{1,\dots,n+1\}$. Every Coxeter element is an $(n+1)$-cycle. For $c=(i_1, i_2,\dots, i_{n+1})$, the labeling of the circle is given (in clockwise order) by $i_1 i_2\cdots i_{n+1}$. We call it the \defn{c-labeling}. Assuming that $i_1=1$, $i_k=n+1$, one has that $c$ lies in $\Std(\Sn)$ if and only if the sequence $i_1i_2\cdots i_k$ is increasing and the sequence $i_k\cdots i_{n} i_{n+1}$ is decreasing (see for instance~\cite[Lemma 8.2]{GobWil}). We set $R_c:=\{i_1, i_2, \dots, i_k\}$, $L_c:=\{ i_k, i_{k+1}, \dots, i_{n+1}, i_1\}$. An example is given in Figure~\ref{figure:orientation} below. To every cycle in the decomposition of $x\in\NC(\Sn,c)$ into a product of disjoint cycles, one associates the polygon obtained as the convex hull of the set of points lying in the support of the cycle (we identify the points with their labels; polygons can be reduced to a line segment or a point). We denote the set of polygons of $x$ by $\Pol(x)$ and identify a polygon with the set of integers labeling its vertices. In the obtained diagram, all the polygons are disjoint, equivalently the partition defined by the cycle decomposition of $x$ is \textit{noncrossing} for the $c$-labeling (see~\cite{Biane} for more details). A reflection $(i,j)$ lies in the parabolic subgroup $P(x)$ if and only if the line segment $ij$ is an edge or a diagonal of a polygon $P\in\Pol(x)$, that is, if and only if there is $P\in \Pol(x)$ with $i,j\in P$, while it is a canonical generator of $P(x)$ if and only if $i,j\in P$ with no $i<k<j$ such that $k\in P$.   

It turns out to be convenient for some proofs to slightly modify the graphical representation in the following way: let $c=(i_1,\dots, i_{n+1})\in\Std(\Sn)$ with $i_1=1$, $i_k=n+1$. Instead of drawing the points on the circle such that the length of the segment between any two successive points is constant, we draw the point with label $1$ at the top of the circle, the point with label $n+1$ at the bottom, the points with label in $i_k\cdots i_n i_{n+1}$ on the left and the points with label in $i_1i_2\cdots i_k$ on the right, each point having a specific height depending on its label. More precisely, if $i, j\in[n+1]$, $i<j$, then the point $i$ is higher than the point $j$ (as done in the right picture of Figure~\ref{figure:orientation}). Also, when representing a noncrossing partition we may use curvilinear polygons instead or regular polygons for convenience (as in Figure~\ref{figure:deux}), but with the convention that every edge of a polygon should be either strictly increasing or decreasing. 

%This even works more generally for $c\in\Cox(\Sn)$, but in the next sections we will need to lift noncrossing partitions to the type $A$ Artin braid group in which case one needs $c\in\Std(\Sn)$.

%Indeed, every element in $\Std(\Sn)$ is a conjugate of $c_{\mathrm{lin}}$ in $\Sn$ (this is clear if one keeps in mind that standard Coxeter elements are $(n+1)$-cycles, but it is a general result for Coxeter groups whose Dynkin diagram is a tree that any two standard Coxeter elements are conjugate by a sequence of cyclic conjugations, see  \cite[Theorem 3.1.4]{GP}; in that case standard Coxeter elements are in bijection with orientations of the Dynkin diagram, see \cite[Theorem 1.5]{Shi}). Explicitly, if $c=(i_1,\dots, i_{n+1})\in\Std(\Sn)$ with $i_1=1$, define $w\in \Sn$ by $w(j)=i_j$ for each $j=1,\dots,n+1$. Then $c=wc_{\mathrm{lin}}w^{-1}$. Since conjugation preserves the partial order $\leqt$, we have $u\leqt c_{\mathrm{lin}}$ if and only if $wuw^{-1}\leqt c$, hence $v\leqt c$ if and only if the graphical representation of $v$ on the circle with $c$-labeling is noncrossing. Hence an element of $\NC(\Sn, c)$ will be called a \defn{$c$-noncrossing partition} or simply a \defn{noncrossing partition} if no confusion is possible. 

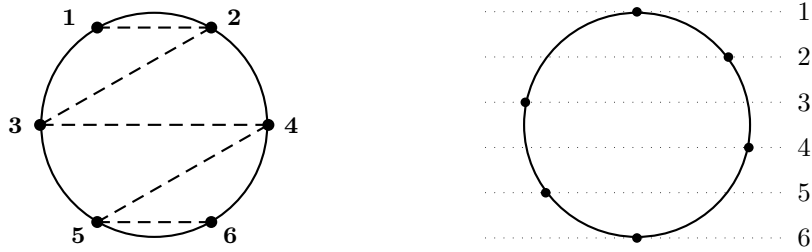
\begin{figure}[h!]
\begin{center}
\begin{tabular}{ccc}
& \begin{pspicture}(1,0)(6,3)%\showgrid
\pscircle(3,1.5){1.5}
\psdots(4.5,1.5)(3.75,2.79)(3.75,.21)(2.25,.21)(2.25,2.79)(1.5,1.5)
\psline[linestyle=dashed](2.25,2.79)(3.75,2.79)
\psline[linestyle=dashed](1.5,1.5)(3.75,2.79)
\psline[linestyle=dashed](1.5,1.5)(4.5,1.5)
\psline[linestyle=dashed](4.5,1.5)(2.25,.21)
\psline[linestyle=dashed](2.25,.21)(3.75,.21)
\psdots[dotsize=4.5pt](3.75,.21)(2.25,.21)
\psdots[dotsize=4.5pt](1.5,1.5)(2.25,2.79)(4.5,1.5)(3.75,2.79)
%\rput(3.75,.21){$\bullet$}
%\rput(4.5,1.5){{\blue $\bullet$}}
%\rput(3.75,2.79){{\blue $\bullet$}}
%\rput(2.25,.21){$\bullet$}
%\rput(2.25,2.79){{\blue $\bullet$}}
%\rput(1.5,1.5){{\blue $\bullet$}}
\rput(4.05,2.92){\textrm{{\footnotesize \textbf{2}}}}
\rput(4.8,1.5){\textrm{{\footnotesize \textbf{4}}}}
\rput(4,.03){\textrm{{\footnotesize \textbf{6}}}}
\rput(2,.03){\textrm{{\footnotesize \textbf{5}}}}
\rput(1.17,1.5){\textrm{{\footnotesize \textbf{3}}}}
\rput(1.88,2.92){\textrm{{\footnotesize \textbf{1}}}}
\end{pspicture} & \begin{pspicture}(0,0)(6,3)
\pscircle(3,1.5){1.5}

\psline[linestyle=dotted, linewidth=0.4pt](1,3)(5,3)
\psline[linestyle=dotted, linewidth=0.4pt](1,2.4)(5,2.4)
\psline[linestyle=dotted, linewidth=0.4pt](1,1.8)(5,1.8)
\psline[linestyle=dotted, linewidth=0.4pt](1,1.2)(5,1.2)
\psline[linestyle=dotted, linewidth=0.4pt](1,0.6)(5,0.6)
\psline[linestyle=dotted, linewidth=0.4pt](1,0)(5,0)

\psdots(3,3)(4.2, 2.4)(1.5304, 1.8)(4.4696, 1.2)(1.8, 0.6)(3,0)
%\pspolygon[linecolor=red, fillstyle=solid, fillcolor=red](2.25,2.79)(3.75,2.79)(3.75,.21)(1.5,1.5)
\rput(5.2,3){\small $1$}
\rput(5.2,2.4){\small $2$}
\rput(5.2,1.8){\small $3$}
\rput(5.2,1.2){\small $4$}
\rput(5.2,0.6){\small $5$}
\rput(5.2,0){\small $6$}
%\psdots(10.5,1.5)(9.75,2.79)(9.75,.21)(8.25,.21)(8.25,2.79)(7.5,1.5)
%\pspolygon[linecolor=red, fillstyle=solid, fillcolor=red](2.25,2.79)(3.75,2.79)(3.75,.21)(1.5,1.5)
%\pspolygon[linecolor=red, fillstyle=solid, fillcolor=red](8.25,2.79)(9.75,2.79)(10.5,1.5)(7.5,1.5)
\end{pspicture}
\end{tabular}
\end{center}
\caption{The $c$-labeling for $c=s_1 s_3 s_5 s_4 s_2=(1,2,4,6,5,3)\in\Std(\Sn)$. We have $L_c=\{1,3,5,6\}$, $R_c=\{1,2,4,6\}$.}
\label{figure:orientation}
\end{figure}

%\begin{definition}\label{def:admiss}
%Let $i,j,k\in[n+1]$ with $|\{i,j,k\}|=3$ and assume that $x=(i,j,k)\in\NC(\Sn,c)$. The triangle with vertices $i,j,k$ on the $c$-labeling represents the element $x\in\NC(\Sn, c)$ and we have $\ell_T(x)=2$. We say that the ordered triple $(t_1=(i,j), t_2=(j,k), t_3=(k,i))$ (or any cyclic permutation of it) of reflections is a \defn{$c$-admissible triangle}. In that case $x$ is equal to the product of any two successive reflections in the triple, that is, $$x=t_1 t_2= t_2 t_3= t_3 t_1.$$
%\end{definition}

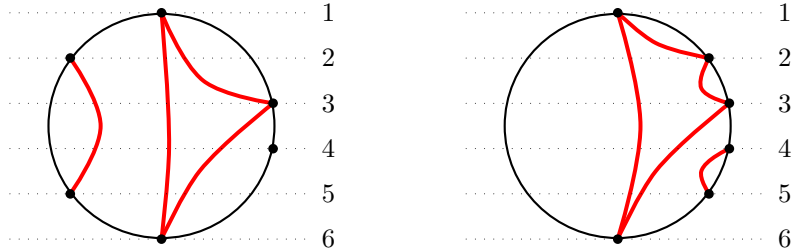
\begin{figure}[h!]
\psscalebox{1}{
\begin{pspicture}(6.3,0)(6,3)%\showgrid
\pscircle(3,1.5){1.5}
%\pscurve[linecolor=red, linewidth=1.5pt](3,3)(2.5, 2.5)(1.8, 2.4)
\pscurve[linecolor=red, linewidth=1.5pt](3,3)(3.55, 2.1)(4.4696, 1.8)
\pscurve[linecolor=red, linewidth=1.5pt](3,0)(3.1,1.2)(3,3)
\pscurve[linecolor=red, linewidth=1.5pt](3,0)(3.5,0.9)(4.4696, 1.8)
\pscurve[linecolor=red, linewidth=1.5pt](1.8, 2.4)(2.2,1.5)(1.8, 0.6)

\psline[linestyle=dotted, linewidth=0.4pt](1,3)(5,3)
\psline[linestyle=dotted, linewidth=0.4pt](1,2.4)(5,2.4)
\psline[linestyle=dotted, linewidth=0.4pt](1,1.8)(5,1.8)
\psline[linestyle=dotted, linewidth=0.4pt](1,1.2)(5,1.2)
\psline[linestyle=dotted, linewidth=0.4pt](1,0.6)(5,0.6)
\psline[linestyle=dotted, linewidth=0.4pt](1,0)(5,0)

\psdots(3,3)(1.8, 2.4)(4.4696, 1.8)(4.4696, 1.2)(1.8, 0.6)(3,0)
%\pspolygon[linecolor=red, fillstyle=solid, fillcolor=red](2.25,2.79)(3.75,2.79)(3.75,.21)(1.5,1.5)
\rput(5.2,3){\small $1$}
\rput(5.2,2.4){\small $2$}
\rput(5.2,1.8){\small $3$}
\rput(5.2,1.2){\small $4$}
\rput(5.2,0.6){\small $5$}
\rput(5.2,0){\small $6$}
%\psdots(10.5,1.5)(9.75,2.79)(9.75,.21)(8.25,.21)(8.25,2.79)(7.5,1.5)
%\pspolygon[linecolor=red, fillstyle=solid, fillcolor=red](2.25,2.79)(3.75,2.79)(3.75,.21)(1.5,1.5)
%\pspolygon[linecolor=red, fillstyle=solid, fillcolor=red](8.25,2.79)(9.75,2.79)(10.5,1.5)(7.5,1.5)
\pscircle(9,1.5){1.5}
%\psdots(9,3)(10.2, 2.4)(10.4696, 1.8)(10.4696, 1.2)(10.2, 0.6)(9,0)\end{pspicture}
\pscurve[linecolor=red, linewidth=1.5pt](9,3)(9.5, 2.6)(10.2, 2.4)
\pscurve[linecolor=red, linewidth=1.5pt](10.2, 2.4)(10.1,2)(10.4696, 1.8)
\pscurve[linecolor=red, linewidth=1.5pt](10.4696, 1.8)(9.5,0.9)(9,0)
\pscurve[linecolor=red, linewidth=1.5pt](9,3)(9.3,1.5)(9,0)

\pscurve[linecolor=red, linewidth=1.5pt](10.4696, 1.2)(10.1,0.9)(10.2, 0.6)

\psline[linestyle=dotted, linewidth=0.4pt](7,3)(11,3)
\psline[linestyle=dotted, linewidth=0.4pt](7,2.4)(11,2.4)
\psline[linestyle=dotted, linewidth=0.4pt](7,1.8)(11,1.8)
\psline[linestyle=dotted, linewidth=0.4pt](7,1.2)(11,1.2)
\psline[linestyle=dotted, linewidth=0.4pt](7,0.6)(11,0.6)
\psline[linestyle=dotted, linewidth=0.4pt](7,0)(11,0)

\rput(11.2,3){\small $1$}
\rput(11.2,2.4){\small $2$}
\rput(11.2,1.8){\small $3$}
\rput(11.2,1.2){\small $4$}
\rput(11.2,0.6){\small $5$}
\rput(11.2,0){\small $6$}

\psdots(9,3)(10.2, 2.4)(10.4696, 1.8)(10.4696, 1.2)(10.2, 0.6)(9,0)\end{pspicture}}
\caption{Conventions on the graphical representations of $c$-noncrossing partitions. On the left: $(1,3,6)(2,5)\in\NC(\mathfrak{S}_6, s_2s_1s_3s_5s_4)$; on the right: $(1,2,3,6)(4,5)\in\NC(\mathfrak{S}_6,s_1 s_2s_3s_4s_5)$.}
\label{figure:deux} 
\end{figure}

\subsubsection{Explicit description of the map from noncrossing partitions to $c$-sortable elements}\label{expl_a}

\begin{definition}[Relative positions of polygons]\label{def_rel}
Let $P, Q$ be two distinct polygons of $x$. We say that $P$ is \defn{strictly at the left of $Q$} if there is a horizontal line $L$ intersecting both $Q$ and $P$ and $L\cap P$ is at the left of $L\cap Q$. In this situation we also say that $Q$ is \defn{strictly at the right of $P$}. There might be several horizontal lines intersecting both $P$ and $Q$ non trivially, but it is clear that the intersections with horizontal lines will always appear in the same order. We say that $P$ and $Q$ are \defn{fully disjoint} if none of them lies strictly at the right of the other. We say that $P$ is \defn{at the left of $Q$} if either it is strictly at the left of $Q$, or they are fully disjoint (and similarly $Q$ lies \defn{at the right of $P$}). Note that given any two distinct polygons, there is always one of them lying at the left of the other. 
\end{definition}

Note that determining the relative position of two polygons can be done by just looking at the relative position of the two diagonals joining the smallest to the biggest index of each polygon, instead of considering the whole polygons. 

We define a total order $\prec$ on the set $\Pol(x)$ of polygons of $x\in\NC(\Sn, c)$. In the graphical representation of $x$, let $P\in\Pol(x)$ be the unique polygon such that
\begin{enumerate}
\item $P$ has no polygon lying strictly at its right  (i.e., any horizontal line crossing $P$ crosses no polygon at the right of $P$),
\item $\max(P)$ is minimal among all polygons satisfying property (1). 
\end{enumerate}

Then $P$ appears first in the total order. We then remove $P$ (together with all its vertices) from the graphical representation. The second polygon $Q$ in the total order is the unique polygon among the remaining ones such that there is no polygon (among the remaining ones) lying strictly at the right of $Q$ and $\max(Q)$ is minimal for this property, and so on. 

We use this order to define a permutation $S_c(x)\in\Sn$. If $P$ is the first polygon in the order and $P=\{ d_1 > d_{2} > \dots > d_k\}$, we define $S_c(x)$ on the subset $\{1, 2, \dots, k\}$ by $1\mapsto d_1$, $2\mapsto d_2, \dots, k\mapsto d_k$. If $Q$ is the second polygon in the order and $Q=\{ e_1 > e_2 >\dots > e_\ell\}$, we define $S_c(x)$ on the subset $\{ k+1, \dots, k+\ell\}$ by $k+1\mapsto e_1$, $k+2\mapsto e_2, \dots, k+\ell \mapsto e_\ell\}$, and so on. 

\begin{exple}
Let $c=s_2 s_1 s_3 s_5 s_4$. Let $(1,3,6)(2,5)\in\NC(\mathfrak{S}_6, c)$ (the graphical representation is given in the left picture of Figure~\ref{figure:deux}). The only polygon with no other polygon strictly at its right is $\{4\}$. After removing it, the only polygon with no other polygon strictly at its right is $\{6,3,1\}$. Hence the total order on the set of polygons of $x$ is $$\{4\} \prec \{6,3,1\} \prec \{5,2\}.$$
It follows that $$S_c(x)=(1,4)(2,6)=s_2 s_1 s_3 s_5 s_4|s_2 s_1 s_3 s_5| s_2.$$
The reduced expression above shows that $S_c(x)$ is $c$-sortable. This is a general fact, proven in Theorem~\ref{a} below.  
\end{exple} 

Note that if $Q$ is strictly at the right of $P$, then we necessarily have $Q\prec P$. It can happen that $P$ and $Q$ are fully disjoint, with $j<k$ for all $j\in P$ and $k\in Q$, but nevertheless $Q\prec P$. This is, me may have a sequence $Q_1, \dots, Q_m$ of pairwise distinct polygons of $x$ with $Q$ strictly at the right of $Q_1$, $Q_1$ strictly at the right of $Q_2$, $\dots$, $Q_m$ strictly at the right of $P$, but $P$ and $Q$ fully disjoint. 

\begin{theorem}\label{a}
Let $x\in\NC(\Sn, c)$. The element $S_c(x)\in\Sn$ is $c$-sortable and we have $\Read(S_c(x))=x$. That is, we have $S_c=\Read^{-1}$. 
\end{theorem}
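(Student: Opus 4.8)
The plan is to prove Theorem~\ref{a} by Cambrian recurrence, i.e. by induction on $n$ (the rank) and on the number of polygons of $x$, exactly matching the inductive scheme that makes Lemmata~\ref{lem:read1} and \ref{lem:read2} usable. Fix $c = s_1 s_2 \cdots s_n \in \Std(\Sn)$ with $s = s_1$ initial. In the $c$-labeling of the circle, being initial means $s$ corresponds to the pair of vertices that are ``extremal'' on one side: concretely $s = (i,j)$ where $\{i,j\}$ is the edge of the circle adjacent to the vertex labeled $1$ on the side where $1$ is followed immediately. I would first record the combinatorial translation: $s S_c(x) > S_c(x)$ in $W$ (equivalently $s \notin N(S_c(x))$) corresponds to the case where no polygon of $x$ has the reflection $s$ as an edge or diagonal; whereas $s S_c(x) < S_c(x)$ corresponds to $s$ being a cover reflection (or at least an inversion, but one then checks it must be a cover reflection because of how $\prec$ orders polygons) of $S_c(x)$.

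The heart of the argument is to check that the three cases in the definition of $S_c(x)$ are compatible with the three cases of the Cambrian recurrence. \textbf{Case 1:} $s$ is not contained in any polygon of $x$. Then I claim $x \in \NC(W_{<s>}, sc)$ — more precisely, removing the vertex of the circle that ``splits off'' under $sc$ leaves $x$ untouched, and the order $\prec$ on $\Pol(x)$ is unchanged, so by induction on rank $S_c(x) = S_{sc}(x)$ is $sc$-sortable and lies in $W_{<s>}$; Lemma~\ref{lem:read1} then gives $c$-sortability, and one checks $\Read_c(S_c(x)) = \Read_{sc}(S_{sc}(x)) = x$ using that the cover reflections and the $c$-sorting word are inherited. \textbf{Case 2:} $s$ is a cover reflection of $x$, i.e. an edge of some polygon $P$ with $\{i,j\} = P \cap \{\text{the two extremal labels}\}$ and no vertex of $P$ strictly between. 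Here $sxs \in \NC(W, scs)$, and conjugating by $s$ on the graphical side amounts to ``flipping'' the relevant vertex to the other side of the circle; I must verify that $sxs$ with its $\prec$-order, under $S_{scs}$, produces $s \cdot S_c(x)$, and invoke Lemma~\ref{lem:read2}. \textbf{Case 3:} $s = (i,j)$ is a diagonal (not an edge) of some polygon, or $s$ joins two vertices in different polygons — I would argue the latter cannot happen when $s \in N(S_c(x))$ is forced, and in the former case, again pass to $scs$ via conjugation, carefully tracking how the polygon containing $i,j$ is modified and how $\prec$ re-sorts.

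The bookkeeping I would isolate as a preliminary lemma: the total order $\prec$ on $\Pol(x)$ behaves predictably under the operation of making $s$ initial change ($c \to sc$ on a parabolic, or $c \to scs$), namely the polygon order is either literally unchanged (Case 1) or changes only by relocating the single affected polygon / splitting off a singleton in a controlled spot. Granting that, the identity $\Read(S_c(x)) = x$ reduces, in each case, to the corresponding identity in smaller rank or with fewer polygons, which is the inductive hypothesis, together with the explicit description in Theorem~\ref{thm_reading} of how $\Read$ reads off cover reflections from the $c$-sorting word. Since $\Read$ is already known to be a bijection, once I show $S_c(x)$ is $c$-sortable and $\Read(S_c(x)) = x$, the statement $S_c = \Read^{-1}$ follows immediately.

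The main obstacle I expect is Case 3 and, within it, correctly describing how conjugation by an initial $s$ acts on the graphical model when $s$ is a long diagonal of a polygon: one has to see that $sxs$ is still a genuine noncrossing partition for the $scs$-labeling and that the $\prec$-order of $\Pol(sxs)$ matches what is needed to get $s \cdot S_c(x)$ from $S_{scs}(sxs)$; the vertex being moved from one side of the circle to the other can change which polygons lie ``strictly to the right'' of which, so the claim that only the affected polygon's position in $\prec$ changes requires a genuine (if elementary) geometric verification. A secondary but real subtlety is ruling out, in the inversion case, the possibility that $s$ is an inversion of $S_c(x)$ without being a cover reflection — this needs the specific structure of how $S_c(x)$ was built block-by-block from the $\prec$-order (the last block always ends at the rightmost-available polygon), and I would prove it as a short standalone observation before splitting into cases.
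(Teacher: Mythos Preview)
Your overall strategy (Cambrian recurrence via Lemmata~\ref{lem:read1} and \ref{lem:read2}) is sound, and the paper itself remarks after its proof that this alternative works and indeed uses it for type $D_n$. However, the paper's actual type $A_n$ proof is quite different and more direct: it invokes Reading's pattern-avoidance characterization of $c$-sortable permutations (a permutation is $c$-sortable iff its one-line notation avoids $jki$ whenever $j\in L_c$ and $kij$ whenever $j\in R_c$), verifies these patterns are avoided straight from the definition of $\prec$, and then checks $\Read(S_c(x))=x$ by reading off the cover reflections of $S_c(x)$ directly from its block structure. No induction is needed. Your approach trades this type-specific shortcut for a uniform inductive scheme, which is more work but generalizes (as the paper's $D_n$ argument shows).

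That said, your case analysis contains a genuine error. In Case~1 you assert that ``$s=s_i$ is not an edge or diagonal of any polygon of $x$'' (i.e.\ $i$ and $i+1$ lie in different polygons) implies $x\in\NC(W_{\langle s\rangle},sc)$, hence $x\in W_{\langle s\rangle}$. This is false: take $c=s_1s_2s_3$ in $A_3$ and $x=(1,3)$; then $1$ and $2$ lie in different polygons of $x$, but $x\notin W_{\langle s_1\rangle}=\langle s_2,s_3\rangle$. The correct trichotomy for the Cambrian recurrence is: (A) $x\in W_{\langle s\rangle}$, meaning no polygon of $x$ contains both an index $\leq i$ and an index $\geq i+1$; (B) $s\leq_T x$, meaning $i$ and $i+1$ lie in the \emph{same} polygon; (C) $s\not\leq_T x$ but $x\notin W_{\langle s\rangle}$. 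Your Case~1 conflates (A) with part of (C). Relatedly, since $s=(i,i+1)$ with $i,i+1$ consecutive integers, whenever they lie in the same polygon they are automatically consecutive vertices of it, so $s$ is always an \emph{edge}, never merely a diagonal; the ``diagonal'' subcase in your Case~3 is therefore vacuous. Once you correct the split to (A)/(B)/(C), case (C) is where the real content lies: you must show that when $i,i+1$ are in different polygons but some polygon straddles them, one still has $sS_c(x)<S_c(x)$ (this uses the $\prec$-order, essentially Lemma~\ref{order_polygons_a}) and that $sS_c(x)=S_{scs}(sxs)$ --- this is the analogue of the ``Third case'' in the paper's $D_n$ proof.
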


\begin{proof}
Rather than using the definition of $c$-sortable elements, we make use of their characterization as a permutation: in \cite[Theorem 4.12]{Reading}, Reading shows that a permutation in $\Sn$ is $c$-sortable if and only if its line notation avoids the pattern $jki$ for all $i<j<k$ such that $j\in L_c$  and avoids the pattern $kij$ for all $i<j<k$ such that $j\in R_c$. 

Let $i<j<k$ and assume that $j\in L_c$. Firstly, we assume that $i\in P\in\Pol(x)$ with $P\neq Q$, where $Q\in\Pol(x)$ is the polygon containing $j$. If $P\prec Q$ then in the line notation of $S_c(x)$ we have that $i$ appears before $j$ hence $S_c(x)$ is $jki$-avoiding. Hence assume that $Q\prec P$. If $k\in P$, then the edge $ik$ of $P$ would lie strictly at the right of $Q$ since $j\in Q$ and $i<j<k$, contradicting $Q\prec P$. If $k\in Q$, then by definition of $S_c(x)$ we have that $k$ appears before $j$ in the line notation of $S_c(x)$, hence that $S_c(x)$ is $jik$-avoiding. We can therefore assume that $k\in R\in \Pol(x)$ for some $R\neq P, Q$. If $R\prec Q$ then $k$ appears before $j$, in which case we are done. So we can assume that $Q\prec R$. Since $Q\prec R, P$, it means that after having removed the polygon $Q$ in the order $\prec$, there can be no polygon intersecting the horizontal line $j$ (because $j\in Q\cap L_c$: by definition of $\prec$, when removing $Q$ there can be no polygon strictly at the right of $Q$, whence the claim). It implies that $P\subseteq \{1, \dots, j-1\}$ and $R\subseteq \{j+1, \dots, n+1\}$. Together with the fact that after removing $Q$, no polygon can cross the line $j$, we get that $P\prec R$, which implies that $k$ appears after $i$ in the line notation of $S_c(x)$, hence that it is $jki$-avoiding. 

% since $i\in P$ and $i<j$, we have the polygon $P$ must have the polygon $Q$ strictly its right (meaning that there is a horizontal line crossing both polygons, with $Q$ at the right of $P$; if not it is easy to see that $P\prec Q$). But in this situation, the index $k>j$ cannot appear between $j$ and $i$: if $k\in Q$, then by definition of $S_c(x)$ we have that $k$ appears before $i$ in $S_c(x)$ (since $i<k$), while the fact that $P$ has $Q$ at its right implies that $k\notin P$ since $k>j$. But a picture shows that in this situation, all the polygons $R$ lying between $Q$ and $P$ in the total order are such that $\max(R) <j$ (one has to use that $j\in L_c$), which shows that $k$ cannot appear between $j$ and $i$ in the line notation of $S_c(x)$. 

Now if $P=Q$, then the indices appearing between $j$ and $i$ in the line notation for $S_c(x)$ are all the vertices $m\in P$ such that $i<m<j$, therefore $k$ cannot appear. Hence $S_c(x)$ is $jki$-avoiding. If $j\in R_c$, the proof that $S_c(x)$ is $kij$-avoiding is similar. This proves that $S_c(x)$ is $c$-sortable.

We now prove that $S_c$ is the inverse of $\Read^{-1}$. Recall that the map $\Read$ sends a $c$-sortable element $y$ to the product of its cover reflections in a suitable order. In the line notation $\pi_1 \pi_2\cdots \pi_{n+1}$ of a permutation $y\in\Sn$, the cover reflections are the transpositions $(\pi_i, \pi_{i+1})$ such that $\pi_{i+1} < \pi_i$. By defintion of the element $S_c(x)$, we have that the cover reflections of $S_c(x)$ are precisely the transpositions $(i,j)$ such that $i$ and $j$ are consecutive vertices of a common polygon of $x$: indeed, given a polygon $P=\{ d_1 > d_2 > \cdots  > d_k\}$, there is $r\in [n+1]$ such that $S_c(x)(r+i-1)=d_i$ for all $i=1, \dots, k$, implying that $(d_i, d_{i+1})$ is a cover reflection of $S_c(x)$ for all $i=1, \dots, k-1$. The element $S_c(x)$ has no other cover reflection since if $P \prec Q$ are consecutive in the order which we put on polygons, then $\min P < \max Q$ by definition of the order, in particular in the line notation of $S_c(x)$ the index following $d_k$ has to be bigger than $d_k$. It follows that the cover reflections of $S_c(x)$ are precisely the canonical generators of the parabolic subgroup associated to the noncrossing partition $x$, which using Theorem~\ref{thm_reading} and the remarks preceding Remark~\ref{rmq_centralizer} proves that $x= \Read(S_c(x))$. 

\end{proof}

The fact that $S_c(x)$ is $c$-sortable can also be proven using Lemmatas~\ref{lem:read1} and \ref{lem:read2}. In type $D_n$ we will define $S_c$ similarly and prove this fact using these Lemmatas. 

Before proving Lemma~\ref{lem:casparcasbis} we show

\begin{lem}\label{order_polygons_a}
Let $c$ be a Coxeter element and let $s=s_i$ be initial in $c$ (that is, $i\in L_c$, $i+1\in R_c$). Let $j=c^{-1}(i)$, $k=c^{-1}(i+1)$. Assume that $j\in P_1$, $k\in P_2$, with $P_1\neq P_2$. Then $P_2\prec P_1$.  
\end{lem}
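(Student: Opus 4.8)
The plan is to argue entirely inside the graphical model of Section~\ref{graph_a}, comparing the vertical extents and horizontal positions of the two polygons and then invoking the implication ``$Q$ strictly at the right of $P$ $\Rightarrow$ $Q\prec P$'' together with transitivity of $\prec$.

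\emph{Step 1: locating $j$ and $K$.} Since $s=s_i$ is initial we have $i\in L_c$ and $i+1\in R_c$, and $j=c^{-1}(i)$ (resp.\ $K=c^{-1}(i+1)$) is the vertex immediately preceding $i$ (resp.\ $i+1$) in the cyclic $c$-labeling. Using that along the $R_c$-arc the labels increase and along the $L_c$-arc they decrease, one checks that $j\in L_c$ with $j>i$, and $K\in R_c$ with $K\le i$; in particular $K<j$, so in the drawing $j$ lies on the left arc, $K$ on the right arc, and $K$ is strictly higher than $j$. One also records that $\{1,\dots,i\}$ forms a contiguous arc with extreme vertices $i$ and $K$, and $\{i+1,\dots,n+1\}$ a contiguous arc with extreme vertices $i+1$ and $j$; thus $j$ is the extreme left-hand vertex of the ``bottom'' region and $K$ the extreme right-hand vertex of the ``top'' region. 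The degenerate possibilities $i=1$ (then $K=i=1$) and $i=n$ (then $j=i+1=n+1$) are noted here and disposed of at the end, since then $P_2$ (resp.\ $P_1$) contains the top vertex $1$ (resp.\ the bottom vertex $n+1$) and is deleted first (resp.\ last) in the procedure defining $\prec$.

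\emph{Step 2: the geometric claim and the overlapping case.} I claim that $P_2$ is not strictly at the left of $P_1$, i.e.\ $P_1$ is at the left of $P_2$ in the sense of Definition~\ref{def_rel}. Granting this: if $P_1$ and $P_2$ meet a common horizontal line then this forces $P_2$ strictly at the right of $P_1$, whence $P_2\prec P_1$. To prove the claim I would use that $P_1$ contains $j$, which lies on the left arc, so the ``outer'' diagonal $\{\min P_1,\max P_1\}$ of $P_1$ reaches the left arc low down; since $x$ is a noncrossing partition and $K=c^{-1}(i+1)\in P_2$ lies on the right arc above $j$, no vertex of $P_2$ can be inserted between the diagonal of $P_1$ and the left arc without $P_2$ crossing $P_1$ — here the extreme-vertex description of the two arcs from Step~1 is exactly what one feeds into the noncrossing condition.

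\emph{Step 3: the disjoint case.} If $P_1$ and $P_2$ share no horizontal line, the label-intervals $[\min P_1,\max P_1]$ and $[\min P_2,\max P_2]$ are disjoint; since $\max P_1\ge j>K\ge\min P_2$, the only option is $\max P_2<\min P_1$, so $P_1$ lies strictly below $P_2$ and every label of $P_1$ exceeds every label of $P_2$ (in particular $\max P_2<\max P_1$). Now I argue directly with the deletion procedure: at the step where $P_1$ is deleted, $P_1$ is $\prec$-minimal among the remaining polygons and every other remaining $\prec$-minimal polygon has label-maximum $>\max P_1$. If $P_2$ were still present it would not be $\prec$-minimal (as $\max P_2<\max P_1$), so some remaining polygon $Z$ would be strictly at the right of $P_2$; then $Z$ meets a horizontal line through the region of $P_2$, hence has a label $\le\max P_2<\min P_1$. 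The point to establish is that any polygon capable of blocking $P_2$ in this fashion lives (weakly) above the horizontal band occupied by $P_1$ — here one uses that $K$ is the extreme right-hand vertex of the top arc, so on the line through $K$ nothing lies to the right of $P_2$, which restricts where such a $Z$ can sit — so following these rightward relations one reaches a $\prec$-minimal polygon of label-maximum $\le\max P_1$, contradicting the choice of $P_1$. Hence $P_2$ has already been deleted, i.e.\ $P_2\prec P_1$.

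\emph{Main obstacle.} The genuinely delicate points are the noncrossing bookkeeping in Step~2 and, above all, Step~3: showing that every polygon that can block $P_2$ from deletion has strictly smaller label-maximum than any polygon that can block $P_1$, so that the ``clearing'' of $P_2$'s obstructions necessarily precedes that of $P_1$'s. The rest (the translation to the circle model, the extreme-vertex description of the arcs, and the two degenerate cases) is routine.
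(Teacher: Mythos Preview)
Your overall plan matches the paper's proof: split into the case where $P_1$ and $P_2$ share a horizontal line versus the fully disjoint case, and in the latter rule out a blocking chain of ``strictly at the right'' relations. The difference is that the paper executes both steps via one concrete observation that you never isolate, and without it your Steps~2 and~3 remain sketches (as you yourself flag).

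The missing observation is this: because $K$ is the immediate $c$-predecessor of $i+1$ on the right arc and $j$ is the immediate $c$-predecessor of $i$ on the left arc, every label in $\{K+1,\dots,i\}$ lies in $L_c$ and every label in $\{i+1,\dots,j-1\}$ lies in $R_c$. The paper uses this as follows. First it disposes of the easy cases $\min(P_1)<K$ and $\max(P_2)>j$: in either case the diagonal $j\,\min(P_1)$ (resp.\ $K\,\max(P_2)$) passes strictly to the left (resp.\ right) of $K\in P_2$ (resp.\ $j\in P_1$), so $P_2$ is strictly at the right of $P_1$ and we are done. In the remaining case $P_1\subseteq\{K+1,\dots,n+1\}$ and $P_2\subseteq\{1,\dots,j-1\}$; then the arc description above pins down exactly which sides of the circle the vertices of $P_1$ and $P_2$ can occupy, and one sees directly from the picture that (i) if $P_1$ and $P_2$ overlap horizontally then $P_2$ must be on the right, and (ii) if they are fully disjoint then no chain $P_1,Q_1,\dots,Q_m,P_2$ with each strictly at the left of the next can exist, since any polygon strictly to the left of $P_1$ is trapped in the region between $P_1$ and the left arc and can never reach back up past $K$ to sit strictly to the left of $P_2$.

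Your Step~2 argument (``no vertex of $P_2$ can be inserted between the diagonal of $P_1$ and the left arc'') is in the right spirit but does not go through as written: $\min P_1$ need not lie on the left arc, and without the case split $\min(P_1)\lessgtr K$ the geometry is not constrained enough. Your Step~3 argument likewise needs the $L_c/R_c$ description of the intervals $\{K+1,\dots,i\}$ and $\{i+1,\dots,j-1\}$ to justify that any polygon $Z$ blocking $P_2$ stays above $P_1$'s band; once you record that, the chain contradiction is immediate and you do not need the more elaborate deletion-step bookkeeping you outline.
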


\begin{proof}
If $\ell:=\min(P_1)<k$, then the segment $j\ell$ is strictly at the left of $P_2$ (since $k\in R_c$) implying that $P_2\prec P_1$. Similarly if $\max(P_2) >j$ we have $P_2\prec P_1$. Hence we can assume that $\min(P_1) >k$ and $\max(P_2)<j$. In this situation, if $P_1$ and $P_2$ are not fully disjoint then we necessarily have that $P_2$ is strictly at the right of $P_1$ implying $P_2\prec P_1$ again (looking at Figure~\ref{figure:order_lemma} is helpful to note this fact: it follows from the fact that $\{k+1, \dots, i\}\subseteq L_c$ and $\{i+1, \dots, j-1\}\subseteq R_c$). We can therefore assume that $P_2$ and $P_1$ are fully disjoint. Moreover since all indices in $P_2$ are strictly smaller than those in $P_1$ (because of fully disjointness and $k\in P_2$, $j\in P_1$ with $k<j$), we have $P_1 \prec P_2$ only if there is a sequence $Q_1, \dots, Q_m$ of polygons of $x$ with $Q_1$ strictly at the left of $P_1$, $Q_i$ strictly at the left of $Q_{i-1}$ for all $i$ and $P_2$ strictly at the left of $Q_m$. Inspection of Figure~\ref{figure:order_lemma} again shows that this situation cannot appear.  

\end{proof}

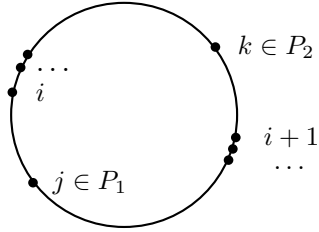
\begin{figure}[h!]
\begin{center}
 \begin{pspicture}(0,0)(6,3)
\pscircle(3,1.5){1.5}

%\psline[linestyle=dotted, linewidth=0.4pt](1,3)(5,3)
%\psline[linestyle=dotted, linewidth=0.4pt](1,2.4)(5,2.4)
%\psline[linestyle=dotted, linewidth=0.4pt](1,1.8)(5,1.8)
%\psline[linestyle=dotted, linewidth=0.4pt](1,1.2)(5,1.2)
%\psline[linestyle=dotted, linewidth=0.4pt](1,0.6)(5,0.6)
%\psline[linestyle=dotted, linewidth=0.4pt](1,0)(5,0)

\psdots(4.2, 2.4)(1.5304, 1.8)(4.4696, 1.2)(4.43,1.05)(4.375,0.9)(1.8, 0.6)(1.64, 2.13)(1.73,2.3)

%\pspolygon[linecolor=red, fillstyle=solid, fillcolor=red](2.25,2.79)(3.75,2.79)(3.75,.21)(1.5,1.5)
%\rput(5.2,3){\small $1$}
\rput(5,2.4){\small $k\in P_2$}
\rput(1.9,1.8){\small $i$}
\rput(2.1,2.1){$\dots$}
\rput(5.2,1.2){\small $i+1$}
\rput(5.2,0.8){\small $\dots$}
\rput(2.55,0.6){\small $j\in P_1$}
%\rput(5.2,0){\small $6$}
%\psdots(10.5,1.5)(9.75,2.79)(9.75,.21)(8.25,.21)(8.25,2.79)(7.5,1.5)
%\pspolygon[linecolor=red, fillstyle=solid, fillcolor=red](2.25,2.79)(3.75,2.79)(3.75,.21)(1.5,1.5)
%\pspolygon[linecolor=red, fillstyle=solid, fillcolor=red](8.25,2.79)(9.75,2.79)(10.5,1.5)(7.5,1.5)
\end{pspicture}
\end{center}
\caption{A picture for the proof of Lemma~\ref{order_polygons_a}. Note that by definition of $j$ and $k$, there are no points on the circle between $j$ and $i$ as well as between $k$ and $i+1$.}
\label{figure:order_lemma}
\end{figure}

%Assume that $P_1\prec P_2$. We first show that if $P_1$ and $P_2$ are fully disjoint, then $P_2 \prec P_1$. Indeed, if they are fully disjoint, then since $P_1\prec P_2$, there must be a sequence $Q_1, Q_2, \dots, Q_m$ of polygons of $x$ such that $Q_1$ is strictly at the right of $P_1$, $Q_2$ is strictly at the right of $Q_1$, ..., $Q_m$ is strictly at the right of $P_1$. It implies that $\min (P_1)\notin L_c$, otherwise $Q_1$ which is strictly at the left of $P_1$ cannot be at the right of $P_2$. Similarly $\max(P_2)\notin R_c$. Since $P_1$ is at the right of $P_2$ it follows that $\min(P_1) \geq i+1$ and $\max(P_2)\leq i$, implying that $P_1$ and $P_2$ are fully disjoint, a contradiction. Hence we can assume that $P_1$ is strictly at the right of $P_2$.  

%Note that $P_1$ cannot contain an index $\ell<k$ since the segment $j\ell$ of $P_1$ is strictly at the left of $k\in P_2$. Similarly $P_2$ cannot contain an index $\ell'> j$. It implies that $\min P_1\geq k+1$ while $\max P_2\leq j-1$. If $\min(P_1)$ lies in $L_c$, say $\min(P_1)=k'\in ([k+1, i]\cap L_c)\cup\{ j\}$, then if $P_1$ is strictly at the right of $P_2$ we have $\max(P_2) > \min(P_1)$ forcing $\max(P_2)$ to lie in $[i+1, j-1]\cap R_c$, contradicting that $P_1$ i strictly at the right of $P_2$.  
%\end{proof}

%\begin{figure}

%(fait dans le carnet star wars)

%\end{figure}

We now prove Lemma~\ref{lem:casparcasbis} in type $A_n$:

\begin{proof}[Proof of~\ref{lem:casparcasbis} in type $A_n$]
We reformulate the lemma to drop any reference to $x$. We have $x^{-1} s x= (x^{-1}(i), x^{-1}(i+1))$. Note that since $xy=c$ we have $x^{-1}(i)=y(j)$, $x^{-1}(i+1)=y(k)$, where $j$ precedes $i$ on the circle and $k$ precedes $i+1$. Assume that $x^{-1}(i) < x^{-1}(i+1)$. Since $k<j$ it implies that $x^{-1} s x\in N(y)$, hence we have to show that $x^{-1} s x\in N(S_c(y))$, that is, that $S_c^{-1}(y) (y(j)) > S_c^{-1}(y)(y(k))$. If $x^{-1}(i)> x^{-1} (i+1)$, then $x^{-1} s x\notin N(y)$, hence we have to show that $S_c^{-1}(y(j)) > S_c^{-1}(y(k))$. What we have just proven is that it suffices to show that $$S_c^{-1}(y) (y(j)) > S_c^{-1}(y)(y(k))$$ to establish the Lemma. 

Let $P_1$ be the polygon containing $j$ and $P_2$ be the one containing $k$. Note that $y(j)\in P_1$, $y(k)\in P_2$. If $P_1=P_2$, then $jk$ is a diagonal of $P_1=P_2$, in particular we have $y(j)<y(k)$ (since $j\in L_c$, $k\in R_c$, $k<j$, hence $y(j)\leq i$, $y(k)\geq i+1$). Since $y(j), y(k)\in P_1=P_2$, by definition of $S_c(y)$ it follows that $S_c^{-1}(y(j))> S_c^{-1}(y)(y(k))$, which concludes in this case. Hence assume that $P_1\neq P_2$. Then by Lemma~\ref{order_polygons_a} we have $P_2\prec P_1$, hence $S_c^{-1}(y) (y(j)) > S_c^{-1}(y)(y(k))$ by definition of $S_c$.

\end{proof}

\subsection{Type $B_n$}

Let $W$ be a Coxeter system of type $A_{2n-1}$ and let $\Gamma$ be the automorphism of $W$ induced by the automorphism of the Dynkin diagram which exchanges $s_i$ and $s_{2n-i}$ for all $i=1,\dots, 2n-1$. Then $W^{\Gamma}$ is a Coxeter group of type $B_n$ (see for instance~\cite[Section 6.1]{DG} and the references therein). Its generators as a Coxeter group are given by $t_0=s_n$, $t_1= s_{n-1} s_{n+1}$, $\dots$, $t_{n-1}=s_1 s_{2n-1}$. For convenience we see $W$ as the symmetric group on $\{-n, \dots, -1, 1, \dots, n\}$; in this description we have $t_0=(1, -1)$, $t_i=(i,i+1)(-i,-i-1)$, $\forall~ i\in\{1, \dots, n-1\}$, and $W^\Gamma$ is the subgroup of \defn{signed permutations}, that is, of elements $w\in W$ such that $w(-i)=-w(i)$ for all $i=1,\dots, n$. The set of reflections in $W_{B_n}$ is given by $$T:=T_{B_n}:=\{ (i, -i)~|~i=1, \dots, n\} \cup \{ (i,j)(-i,-j)~|~i\neq \pm j\text{~and~} i, j=1,\dots, n\}.$$ Given $w\in W^{\Gamma}\subseteq W$, we denote by $N^A(w)$ (resp. $N^B(w)$) the inversion set of $w$ in $W$ (resp. $W^{\Gamma}$). For $w\in W^\Gamma$ and $i\in\{1,\dots, n\}$, we have $(i,-i)\in N^A(w)\Leftrightarrow (i,-i)\in N^B(w)$, while for $i, j\in \{1, \dots, n\}$  with $i\neq \pm j$ we have $(i,j)(-i,-j)\in N^B(w)\Leftrightarrow (i,j)\in N^A(w)\Leftrightarrow (-i, -j)\in N^A(w)$ (see for instance~\cite[Section 8.1]{BjBr}). The same equivalences hold if we replace inversion sets by sets of left or right descents and reflections by simple reflections.  

The standard Coxeter elements in $W^\Gamma$ are the Coxeter elements in $W$ which are fixed by $\Gamma$. Let $c$ be such a Coxeter element. Then $\NC(W^\Gamma, c)=\NC(W, c)^\Gamma$, in particular noncrossing partitions of type $B_n$ can be graphically represented by noncrossing partitions of type $A_{2n-1}$ for the $c$-labeling, which are stable by a half turn (see for instance~\cite[Section 6.3]{DG}). 

\begin{lem}\label{fixed_sort}
We have $\Sort(W^\Gamma, c)=\Sort(W, c)^\Gamma$. 
\end{lem}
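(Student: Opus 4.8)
The plan is to prove the two inclusions $\Sort(W^\Gamma,c)\subseteq \Sort(W,c)^\Gamma$ and $\Sort(W,c)^\Gamma\subseteq\Sort(W^\Gamma,c)$ by induction on the rank of $W^\Gamma$ and the length of the sortable element, using the Cambrian recurrence encoded in Lemmatas~\ref{lem:read1} and \ref{lem:read2}, exactly as these two lemmas were designed to be applied. The base cases (rank $0$ or $1$, or $w=e$) are immediate since the trivial element is $c$-sortable in both settings and is $\Gamma$-fixed.

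First I would fix $s=t_i$ initial in the type-$B_n$ Coxeter element $c$; under the identification $W^\Gamma\subseteq W$ with $c$ a $\Gamma$-fixed Coxeter element of $W$, this $s$ corresponds to an element of $W$ which is a product of the (one or two) initial simple reflections of $W$ lying in the $\Gamma$-orbit determined by $i$. Concretely $t_0=s_n$ is a single initial reflection, while for $i\ge 1$ the reflection $t_i=s_{n-i}s_{n+i}$ corresponds to the pair $\{s_{n-i},s_{n+i}\}$, which are simultaneously initial in $c$ (this uses that $c$ is $\Gamma$-fixed, so if one of them is initial so is its $\Gamma$-image, and they commute). Given $w\in W^\Gamma$, I would split into the two cases $\ell_B(sw)>\ell_B(w)$ and $\ell_B(sw)<\ell_B(w)$; by the equivalences recalled just before the lemma (descents of $w\in W^\Gamma$ in $W^\Gamma$ versus in $W$), the case $\ell_B(sw)>\ell_B(w)$ is equivalent to $\ell_A(s_{n-i}w)>\ell_A(w)$ (and likewise for $s_{n+i}$ when $i\ge 1$), so the two length conditions are synchronized. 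In the ascending case, Lemma~\ref{lem:read1} for type $B$ reduces $c$-sortability of $w$ to $t_ic$-sortability of $w$ in $W^\Gamma_{<t_i>}$, while Lemma~\ref{lem:read1} for type $A$ (applied once or twice, peeling off $s_{n-i}$ and $s_{n+i}$) reduces $c$-sortability of $w$ in $W$ to sortability in a parabolic subgroup of $W$; one checks that $W^\Gamma_{<t_i>}$ is precisely $(W')^{\Gamma'}$ for the appropriate type-$A$ parabolic $W'$ with the restricted $\Gamma$-action, and that the restricted Coxeter element agrees, so the inductive hypothesis applies and closes this case. In the descending case, Lemma~\ref{lem:read2} for type $B$ gives $w\in\Sort_c(W^\Gamma)\Leftrightarrow sw\in\Sort_{scs}(W^\Gamma)$, and the type-$A$ version gives $w\in\Sort_c(W)\Leftrightarrow s_{n-i}w\in\Sort_{s_{n-i}c s_{n-i}}(W)$, then again for $s_{n+i}$; since $s=t_i$ acts on $w$ and on $c$ exactly as the commuting product $s_{n-i}s_{n+i}$, the element $scs$ (type $B$) equals $s_{n-i}s_{n+i}c s_{n+i}s_{n-i}$ (type $A$) and is again $\Gamma$-fixed, and $sw$ is $\Gamma$-fixed iff $w$ is, so induction on length applies.

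The only genuine subtlety — and the step I expect to be the main obstacle — is the ascending case when $i\ge 1$, where one must peel off \emph{two} simple reflections $s_{n-i},s_{n+i}$ on the type-$A$ side while peeling off only \emph{one} reflection $t_i$ on the type-$B$ side, and verify that the resulting parabolic data match: namely that $W^\Gamma_{<t_i>}$ is the $\Gamma$-fixed subgroup of $\langle S\setminus\{s_{n-i},s_{n+i}\}\rangle$, that the $\Gamma$-action restricts correctly (in particular when removing $s_{n-1},s_{n+1}$ the diagram $A_{2n-1}$ breaks into two pieces swapped by $\Gamma$ plus possibly a middle $A_1$, and the induction is on the rank of $W^\Gamma$ which drops by exactly one), and that $t_ic$ corresponds to the restriction of $c$ obtained by deleting $s_{n-i}$ and $s_{n+i}$. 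One also needs the elementary observation that if $s_{n-i}$ and $s_{n+i}$ are both initial in $c$ and we are in the ascending case, then applying Lemma~\ref{lem:read1} in type $A$ twice is legitimate because after removing $s_{n-i}$ the reflection $s_{n+i}$ is still initial in the restricted Coxeter element and still satisfies the length-increasing condition. Once these bookkeeping facts are in place, both inclusions follow from the synchronized Cambrian recurrences and the induction hypothesis. A clean way to package the matching of parabolic/Coxeter data is to note that removing a $\Gamma$-orbit of simple generators from the type-$A$ diagram and then taking $\Gamma$-fixed points commutes with removing the corresponding generator $t_i$ from the type-$B$ diagram, and that the restriction of a $\Gamma$-fixed Coxeter element is again $\Gamma$-fixed; I would state this as a short preliminary observation and then the proof of the lemma is just the two-case induction above.
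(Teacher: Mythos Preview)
Your proposal is correct in outline but takes a much more laborious route than the paper. You invoke the full Cambrian recurrence (Lemmatas~\ref{lem:read1} and~\ref{lem:read2}) simultaneously in types $A$ and $B$, which forces you to track how $\Gamma$-orbits of simple reflections interact with parabolic restrictions, to handle the product decompositions arising when $t_i$ is removed for $1\le i\le n-2$, and to set up the induction over a broader class of pairs $(W',\Gamma')$ than the one in the statement. All of this can be made to work, but it is substantial bookkeeping for a lemma that admits a two-line proof.

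The paper argues as follows. The inclusion $\Sort(W^\Gamma,c)\subseteq\Sort(W,c)^\Gamma$ is immediate from the definition of $c$-sortability. For the reverse inclusion, let $w\in\Sort(W,c)^\Gamma$ with $c$-sorting word $w_1 w_2\cdots w_k$ in $W$. If $\Gamma(w_1)\neq w_1$, then some $s_i$ occurs in $w_1$ but $s_{2n-i}$ does not; since $w_k\subseteq\cdots\subseteq w_1$, the letter $s_{2n-i}$ occurs in no $w_j$, so the support of $w$ (the set of simple reflections in any reduced expression) contains $s_i$ but not $s_{2n-i}$, contradicting $\Gamma(w)=w$. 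Hence each $w_j$ is $\Gamma$-stable, the $c$-sorting word descends to $W^\Gamma$, and $w\in\Sort(W^\Gamma,c)$.

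The key idea you are missing is simply that $c$-sortability pins down the \emph{support} of $w$ via the first block $w_1$, and support is a $\Gamma$-invariant of $w$. This avoids any induction on rank or length. Your approach would perhaps generalise more readily to other folding situations, but for this lemma it is overkill.
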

\begin{proof}
By definition of $c$-sortability, we have that $\Sort(W^\Gamma, c)\subseteq \Sort(W, c)^\Gamma$. Conversely, let $w\in \Sort(W, c)^\Gamma$ and let $w=w_1 w_2 \cdots w_k$, where $w_i$ is the subexpression of the $i$-th copy of $c$ in the $c$-sorting word for $w$. We want to show that $\Gamma(w_i)=w_i$ for all $i$. By induction on $k$ it suffices to show that $\Gamma(w_1)=w_1$. if $\Gamma(w_1)\neq w_1$, then there is an $i$ such that $s_i$ appears in $w_1$ but $s_{2n-i}$ does not (recall that each simple reflection can occur at most once in $w_1$). By definition of $c$-sortability, it implies that $s_{2n-i}$ appears in no $w_i$, hence that there is a reduced expression of $w$ in which $s_i$ appears but $s_{2n-i}$ does not, contradicting $\Gamma(w)=w$ (since the simple reflections occuring in any two reduced decompositions are the same). Hence $\Gamma(w_i)=w_i$ for all $i$. It follows that $w\in \Sort(W^{\Gamma}, c)$. Hence $\Sort(W^\Gamma, c)=\Sort(W, c)^\Gamma$. 
\end{proof}

Now the above equivalences on descent and inversion sets of types $A$ and $B$ imply that for $w\in\Sort( W^\Gamma, c)$ we have $\Read^A(w)=\Read^B(w)$, where $\Read^A$ (resp. $\Read^B$) denotes the map from $c$-sortable elements to noncrossing partitions in type $A$ (respectively in type $B$). As a consequence, the map $S_c^B: \NC(W^{\Gamma}, c)\longrightarrow \Sort(W^\Gamma, c)$ defined as the inverse of $\Read^B$ coincides with the restriction of the inverse $S_c^A=S_c: \NC(W, c)\longrightarrow \Sort(W, c)$ of $\Read^A$ (described in Section~\ref{expl_a}) to $W^\Gamma$.  

It is now easy to derive a proof of Lemma~\ref{lem:casparcasbis} in type $B_n$ from the Lemma in type $A_n$:

\begin{proof}[Proof of Lemma~\ref{lem:casparcasbis} in type $B_n$]
Assume that $s=t_0$. Then $x^{-1}sx=(i,-i)$ for some $i\in\{1,\dots, n\}$. We have that $s$ is initial in $c$ viewed as a Coxeter element of type $A_{2n-1}$. By the properties of inversion sets given above and Lemma~\ref{lem:casparcasbis} in type $A_n$ we have
$$x^{-1} s x\in N^B(y) \Leftrightarrow x^{-1} sx\in N^A(y)\Leftrightarrow x^{-1}sx\in N^A(S_c(y)) \Leftrightarrow x^{-1} sx\in N^B(S_c(y)),$$
which concludes the proof in that case. Now assume that $s=t_i=s_{n-i} s_{n+i}$, $i\in\{1, \dots, n-1\}$. We have that both $s_{n-i}$ and $s_{n+i}$ are initial in $c$ viewed as a Coxeter element of type $A_{2n-1}$. By the properties of inversion sets given above and Lemma~\ref{lem:casparcasbis} in type $A_n$ we have 
$$x^{-1}sx\in N^B(y)\Leftrightarrow x^{-1} s_{n-i} x\in N^A(y) \Leftrightarrow x^{-1} s_{n-i} x\in N^A(S_c(y))\Leftrightarrow x^{-1} sx\in N^B(S_c(y)),$$ 
which concludes the proof in that case. 
\end{proof}

\subsection{Type $D_n$}

\subsubsection{Graphical representations}

The Coxeter group $W_{D_n}$ of type $D_n$ is realized as an index two subgroup of the signed permutation group $W_{B_n}$; a permutation $w\in W_{B_n}$ is in $W_{D_n}$ if and only if $|\{i\in\{1,2,\dots,n\}~|~w(i)<0\}$ is even. The Coxeter generating set is given by $s_0=(1,-2)(-1,2)$, $s_i=(i,i+1)(-i,-i-1)$ for all $i=1,\dots, n-1$. Recall that that $w(-i)=-w(i)$, for all $i\in\{1,\dots, n\}$ and all $w \in W_{B_n}$.
In $W_{B_n}$, cycles of the shape $(i_1, \dots, i_r, -i_1, \dots, -i_r)$ are abbreviated by $[i_1, \dots i_r]$ and called \defn{balanced cycles}, and those
of type $(i_1, \dots, i_r)( -i_1, \dots, -i_r)$ by $((i_1, \dots, i_r ))$ and called \defn{paired cycles}. Every $w \in W_{D_n}$ can be written as a product of disjoint cycles in which there is an even number of balanced cycles (see~\cite[Section 2]{AR}). 

The set of reflections in $W_{D_n}$ is 
$$T:= T_{D_n} :=\{ (i,j)(-i,-j) \mid i, j\in\{-n, \dots, n\}, i\neq \pm j\}.$$The standard Coxeter elements of type $D_n$ are of the form $$(i_1, -i_1)(i_2, \dots, i_n, -i_2, \dots, -i_n),$$ where $\{i_1, \dots, i_n\}=\{1,\dots, n\}$, $i_1\in\{1,2\}$ and the sequence $i_2\cdots i_n$ is first increasing, then decreasing (see \cite[Lemma 4.3]{BG}). If $i_k=n$ we set $$L_c=\{i_k, i_{k+1}, \dots, i_n, -i_2, \dots, -i_k\},~R_c=-L_c.$$ 
The elements of $\NC(W_{D_n}, c)$ also admit noncrossing-like graphical representations. The reference for this is mostly~\cite{AR}, but our conventions are those from~\cite[Section 5]{BG}. We briefly recall how it works and refer the reader to~\cite{BG} for more details. The circle is labelled as in type $A_n$ with the sequence $i_2,\dots, i_n, -i_2, \dots, -i_n$ but there is one additional point at the center of the circle, with the two labels $\pm i_1$. As in type $A_n$, we put the points at different heights such that point $i$ is higher than point $j$ if and only if $i <j$, with $-n$ at the top of the circle and $n$ at the bottom. Then given $x\in \NC(W_{D_n},c)$, we consider the cycle decomposition of $x$ inside $W_{B_n}\subseteq W_{A_{2n-1}}$ and associate to every (type $A_{2n-1}$) cycle a polygon, given by the convex hull of those points having their label in the support of the cycle. If the cycle $(i_1, -i_1)$ appears in the decomposition, in fact in that case there must be a polygon $P$ such that $P=-P$, hence with the middle point labeled by $\pm i_1$ inside it. In that case we leave the middle point untouched. We often identify polygons with their labels; in the case where $P=-P$, consider $\pm i_1$ as lying in $P$ (that is, in the case where we have two balanced cycles in $x$, then one must be $(i_1, -i_1)$; we then consider theses two cycles as a single polygon). We denote the set of polygons of $x$ by $\Pol(x)$. Hence to each paired cycles are associated two polygons $P, -P$, while if there are balanced cycles (in which case there are exactly two of them), we consider them as a single polygon $Q=-Q$ (and we call such a polygon \defn{symmetric}). 
  
This gives a noncrossing diagram, i.e. the obtained polygons are all disjoint, except possibly in two cases:
\begin{itemize}

\item The first case is if there is a polygon $P\in\Pol(x)$ with $i_1\in P$ but $-i_1\notin P$. Since $x$ is a signed permutation, we have that $-P$ is also a polygon of $x$. In that case the polygons $P$ and $-P$ have the middle point in common. Note that we have to specify, in the graphical representation, whether $i_1\in P$ or $i_1\in -P$. 
\item The second case appears if $x$ has a product of cycles of the form $$(j, -j)(i_1,-i_1)$$ for some $j\neq \pm i_1$. In that case, to avoid confusion with the graphical representation of the noncrossing partition $((j, i_1))$ (or $((j, -i_1))$), we join the point $j$ to the point $-j$ by two curves in such a way that the middle point lies between the two curves (the cycle $(j,-j)$ corresponds to a polygon $P$ with $P=-P$, hence this way of representing the situation should be seen as a special case of the situation mentioned above when a factor $(i_1, -i_1)$ appears in the decomposition of $x$). 

\end{itemize}

Conversely, every noncrossing diagram as above can be used to define an element of $\NC(W_{D_n}, c)$, by associating to a polygon $P\neq -P$, $P=\{m_1, \dots, m_k\}$ (in clockwise order) the $k$-cycle $(m_1,\dots, m_k)$ and to a polygon $P=-P=\{m_1, \dots, m_k\}\overset{\cdot}{\cup}\{ i_1, -i_1\}$ (where the $m_i$'s are read in clockwise order) the product of cycles $(i_1, -i_1)(m_1, \dots, m_k)$. 

Examples are given in Figure~\ref{fig:d_n} below and we refer to \cite[Section 5.2]{BG} or \cite{AR} for more details.

\begin{center}
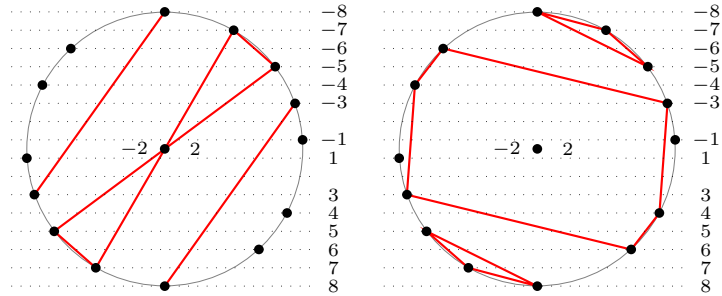
\begin{figure}[h!]

\begin{tabular}{cc}

\psscalebox{1.01}{\begin{pspicture}(-2,-1.92)(2.5,1.92)%\showgrid
\pscircle[linecolor=gray, linewidth=0.2pt](0,0){1.8}
%\pscurve[linecolor=red, linewidth=1.5pt](1.2237,1.32)(1.3,0.86)(1.697,0.6)
%\pscurve[linecolor=red, linewidth=1.5pt](1.697,0.6)(1,0.85)(0,1)(-1.44,1.08)
%%\pscurve[linecolor=red, linewidth=1.5pt](1.2237,1.32)(1, 1.2)(0,1.07)(-1.44,1.08)
%\pscurve[linecolor=red, linewidth=1.5pt](1.7636,0.36)(0,0)(-1.796,0.12)
%\pscurve[linecolor=red, linewidth=1.5pt](-1.796,0.12)(-1.4, -0.12)(-1.7636,-0.36)
%\pscurve[linecolor=red, linewidth=1.5pt](-1.7636,-0.36)(0, -0.4)(1.59197,-0.84)
%\pscurve[linecolor=red, linewidth=1.5pt](1.59197,-0.84)(1.4, -0.24)(1.7636,0.36)

\psline[linecolor=red](0.897,1.56)(1.44,1.08)(0,0)(0.897,1.56)
\psline[linecolor=red](-0.897,-1.56)(-1.44,-1.08)(0,0)(-0.897,-1.56)
\psline[linecolor=red](0,1.8)(-1.697,-0.6)
\psline[linecolor=red](0,-1.8)(1.697,0.6)

\psdots(0,1.8)(0.897,1.56)(-1.2237,1.32)(1.44,1.08)(-1.59197,0.84)(1.697,0.6)(0,0)(1.796,0.12)(-1.796,-0.12)(-1.697,-0.6)(1.59197,-0.84)(-1.44,-1.08)(1.2237,-1.32)(-0.897,-1.56)(0,-1.8)

%\psdots[linecolor=blue, fillcolor=blue](1.2237,1.32)(-1.59197,0.84)(1.7636,-0.36)(1.59197,-0.84)
%\psdots(1.2237,1.32)(-1.44,1.08)(1.796,0.12)(-1.44,-1.08)

\psline[linestyle=dotted, linewidth=0.4pt](-2,1.8)(2,1.8)
\psline[linestyle=dotted, linewidth=0.4pt](-2,1.56)(2,1.56)
\psline[linestyle=dotted, linewidth=0.4pt](-2,1.32)(2,1.32)
\psline[linestyle=dotted, linewidth=0.4pt](-2,1.08)(2,1.08)
\psline[linestyle=dotted, linewidth=0.4pt](-2,0.84)(2,0.84)
\psline[linestyle=dotted, linewidth=0.4pt](-2,0.6)(2,0.6)
\psline[linestyle=dotted, linewidth=0.4pt](-2,0.36)(2,0.36)
\psline[linestyle=dotted, linewidth=0.4pt](-2,0.12)(2,0.12)
\psline[linestyle=dotted, linewidth=0.4pt](-2,-1.8)(2,-1.8)
\psline[linestyle=dotted, linewidth=0.4pt](-2,-1.56)(2,-1.56)
\psline[linestyle=dotted, linewidth=0.4pt](-2,-1.32)(2,-1.32)
\psline[linestyle=dotted, linewidth=0.4pt](-2,-1.08)(2,-1.08)
\psline[linestyle=dotted, linewidth=0.4pt](-2,-0.84)(2,-0.84)
\psline[linestyle=dotted, linewidth=0.4pt](-2,-0.6)(2,-0.6)
\psline[linestyle=dotted, linewidth=0.4pt](-2,-0.36)(2,-0.36)
\psline[linestyle=dotted, linewidth=0.4pt](-2,-0.12)(2,-0.12)

\rput(2.2, 1.8){\tiny $-8$}
\rput(2.2, 1.56){\tiny $-7$}
\rput(2.2, 1.32){\tiny $-6$}
\rput(2.2, 1.08){\tiny $-5$}
\rput(2.2, 0.84){\tiny $-4$}
\rput(2.2, 0.6){\tiny $-3$}
\rput(-0.4,0){\tiny $-2$}
\rput(0.4,0){\tiny $2$}
\rput(2.2, 0.12){\tiny $-1$}
\rput(2.2, -0.12){\tiny $1$}
\rput(2.2, -0.6){\tiny $3$}
\rput(2.2, -0.84){\tiny $4$}
\rput(2.2, -1.08){\tiny $5$}
\rput(2.2, -1.32){\tiny $6$}
\rput(2.2, -1.56){\tiny $7$}
\rput(2.2, -1.8){\tiny $8$}
%\rput(0.82,0.82){\scriptsize $P_1$}
%\rput(0.28,0){\scriptsize $P_2$}
%\rput(-1.2,0){\scriptsize $P_3$}
%\rput(-1.3,-1.85){\scriptsize $P_4$}
%\psline[linecolor=red]{->}(-0.05, 1.53)(-0.6,-1.483)
\end{pspicture}}

&

\psscalebox{1.01}{\begin{pspicture}(-2,-1.92)(2.5,1.92)%\showgrid
\pscircle[linecolor=gray, linewidth=0.2pt](0,0){1.8}
%\pscurve[linecolor=red, linewidth=1.5pt](1.2237,1.32)(1.3,0.86)(1.697,0.6)
%\pscurve[linecolor=red, linewidth=1.5pt](1.697,0.6)(1,0.85)(0,1)(-1.44,1.08)
%%\pscurve[linecolor=red, linewidth=1.5pt](1.2237,1.32)(1, 1.2)(0,1.07)(-1.44,1.08)
%\pscurve[linecolor=red, linewidth=1.5pt](1.7636,0.36)(0,0)(-1.796,0.12)
%\pscurve[linecolor=red, linewidth=1.5pt](-1.796,0.12)(-1.4, -0.12)(-1.7636,-0.36)
%\pscurve[linecolor=red, linewidth=1.5pt](-1.7636,-0.36)(0, -0.4)(1.59197,-0.84)
%\pscurve[linecolor=red, linewidth=1.5pt](1.59197,-0.84)(1.4, -0.24)(1.7636,0.36)

\psline[linecolor=red](-1.2237,1.32)(1.697,0.6)(1.59197,-0.84)(1.2237,-1.32)(-1.697,-0.6)(-1.59197,0.84)(-1.2237,1.32)
\psline[linecolor=red](0,1.8)(0.897,1.56)(1.44,1.08)(0,1.8)
\psline[linecolor=red](0,-1.8)(-0.897,-1.56)(-1.44,-1.08)(0,-1.8)

\psdots(0,1.8)(0.897,1.56)(-1.2237,1.32)(1.44,1.08)(-1.59197,0.84)(1.697,0.6)(0,0)(1.796,0.12)(-1.796,-0.12)(-1.697,-0.6)(1.59197,-0.84)(-1.44,-1.08)(1.2237,-1.32)(-0.897,-1.56)(0,-1.8)

%\psdots[linecolor=blue, fillcolor=blue](1.2237,1.32)(-1.59197,0.84)(1.7636,-0.36)(1.59197,-0.84)
%\psdots(1.2237,1.32)(-1.44,1.08)(1.796,0.12)(-1.44,-1.08)

\psline[linestyle=dotted, linewidth=0.4pt](-2,1.8)(2,1.8)
\psline[linestyle=dotted, linewidth=0.4pt](-2,1.56)(2,1.56)
\psline[linestyle=dotted, linewidth=0.4pt](-2,1.32)(2,1.32)
\psline[linestyle=dotted, linewidth=0.4pt](-2,1.08)(2,1.08)
\psline[linestyle=dotted, linewidth=0.4pt](-2,0.84)(2,0.84)
\psline[linestyle=dotted, linewidth=0.4pt](-2,0.6)(2,0.6)
\psline[linestyle=dotted, linewidth=0.4pt](-2,0.36)(2,0.36)
\psline[linestyle=dotted, linewidth=0.4pt](-2,0.12)(2,0.12)
\psline[linestyle=dotted, linewidth=0.4pt](-2,-1.8)(2,-1.8)
\psline[linestyle=dotted, linewidth=0.4pt](-2,-1.56)(2,-1.56)
\psline[linestyle=dotted, linewidth=0.4pt](-2,-1.32)(2,-1.32)
\psline[linestyle=dotted, linewidth=0.4pt](-2,-1.08)(2,-1.08)
\psline[linestyle=dotted, linewidth=0.4pt](-2,-0.84)(2,-0.84)
\psline[linestyle=dotted, linewidth=0.4pt](-2,-0.6)(2,-0.6)
\psline[linestyle=dotted, linewidth=0.4pt](-2,-0.36)(2,-0.36)
\psline[linestyle=dotted, linewidth=0.4pt](-2,-0.12)(2,-0.12)

\rput(2.2, 1.8){\tiny $-8$}
\rput(2.2, 1.56){\tiny $-7$}
\rput(2.2, 1.32){\tiny $-6$}
\rput(2.2, 1.08){\tiny $-5$}
\rput(2.2, 0.84){\tiny $-4$}
\rput(2.2, 0.6){\tiny $-3$}
\rput(-0.4,0){\tiny $-2$}
\rput(0.4,0){\tiny $2$}
\rput(2.2, 0.12){\tiny $-1$}
\rput(2.2, -0.12){\tiny $1$}
\rput(2.2, -0.6){\tiny $3$}
\rput(2.2, -0.84){\tiny $4$}
\rput(2.2, -1.08){\tiny $5$}
\rput(2.2, -1.32){\tiny $6$}
\rput(2.2, -1.56){\tiny $7$}
\rput(2.2, -1.8){\tiny $8$}
%\rput(0.82,0.82){\scriptsize $P_1$}
%\rput(0.28,0){\scriptsize $P_2$}
%\rput(-1.2,0){\scriptsize $P_3$}
%\rput(-1.3,-1.85){\scriptsize $P_4$}
%\psline[linecolor=red]{->}(-0.05, 1.53)(-0.6,-1.483)
\end{pspicture}}

\end{tabular}

\caption{Examples of noncrossing partitions $x_1=((3,-8))((7,5,-2))$, $x_2=((8,7,5))[4,6,3][2]$ of type $D_n$.}
\label{fig:d_n}

\end{figure}

\end{center}

For our purposes, we need to slightly modify the graphical representation introduced above, as done in ~\cite{BG}. We proceed as follows: starting from a diagram as above, we then split the middle point into two points with respective labels $i_1$ and $-i_1$, which we move on the vertical axis of the circle in such a way that the condition on the height of the points stays satisfied (see Figure~\ref{fig:d_n_2}). Note that the polygons might be replaced by curvilinear polygons to keep the noncrossing property, and that in general there is not a unique way to split $P$ and $-P$ in the case where $P\neq -P$ and $\pm i_1\in P$ (see Figure~\ref{fig:d_n_nonunique}). In all the cases, we require that each curve corresponding to an edge of a curvilinear polygon is either strictly increasing or decreasing.

\begin{center}

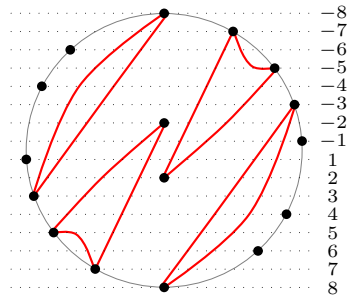
\begin{figure}[h!]

\psscalebox{1.01}{\begin{pspicture}(-2,-1.92)(2.5,1.8)%\showgrid
\pscircle[linecolor=gray, linewidth=0.2pt](0,0){1.8}
%\pscurve[linecolor=red, linewidth=1.5pt](1.2237,1.32)(1.3,0.86)(1.697,0.6)
%\pscurve[linecolor=red, linewidth=1.5pt](1.697,0.6)(1,0.85)(0,1)(-1.44,1.08)
%%\pscurve[linecolor=red, linewidth=1.5pt](1.2237,1.32)(1, 1.2)(0,1.07)(-1.44,1.08)
%\pscurve[linecolor=red, linewidth=1.5pt](1.7636,0.36)(0,0)(-1.796,0.12)
%\pscurve[linecolor=red, linewidth=1.5pt](-1.796,0.12)(-1.4, -0.12)(-1.7636,-0.36)
%\pscurve[linecolor=red, linewidth=1.5pt](-1.7636,-0.36)(0, -0.4)(1.59197,-0.84)
%\pscurve[linecolor=red, linewidth=1.5pt](1.59197,-0.84)(1.4, -0.24)(1.7636,0.36)

\psline[linecolor=red](-0.897,-1.56)(0,0.32)
\pscurve[linecolor=red](-1.44,-1.04)(-0.8,-0.35)(0,0.36)
\pscurve[linecolor=red](-0.92,-1.53)(-1.15,-1.1)(-1.44,-1.08)

\psline[linecolor=red](0.897,1.56)(0,-0.32)
\pscurve[linecolor=red](1.44,1.04)(0.8,0.35)(0,-0.36)
\pscurve[linecolor=red](0.92,1.53)(1.15,1.1)(1.44,1.08)

\pscurve[linecolor=red](0,1.8)(-1.1,0.85)(-1.697,-0.56)
\pscurve[linecolor=red](0.02,1.76)(-0.65,0.85)(-1.697,-0.6)

\pscurve[linecolor=red](0,-1.8)(1.1,-0.85)(1.697,0.56)
\pscurve[linecolor=red](-0.02,-1.76)(0.65,-0.85)(1.697,0.6)

\psdots(0,1.8)(0.897,1.56)(-1.2237,1.32)(1.44,1.08)(-1.59197,0.84)(1.697,0.6)(0,0.36)(1.796,0.12)(-1.796,-0.12)(0,-0.36)(-1.697,-0.6)(1.59197,-0.84)(-1.44,-1.08)(1.2237,-1.32)(-0.897,-1.56)(0,-1.8)

%\psdots[linecolor=blue, fillcolor=blue](1.2237,1.32)(-1.59197,0.84)(1.7636,-0.36)(1.59197,-0.84)
%\psdots(1.2237,1.32)(-1.44,1.08)(1.796,0.12)(-1.44,-1.08)

\psline[linestyle=dotted, linewidth=0.4pt](-2,1.8)(2,1.8)
\psline[linestyle=dotted, linewidth=0.4pt](-2,1.56)(2,1.56)
\psline[linestyle=dotted, linewidth=0.4pt](-2,1.32)(2,1.32)
\psline[linestyle=dotted, linewidth=0.4pt](-2,1.08)(2,1.08)
\psline[linestyle=dotted, linewidth=0.4pt](-2,0.84)(2,0.84)
\psline[linestyle=dotted, linewidth=0.4pt](-2,0.6)(2,0.6)
\psline[linestyle=dotted, linewidth=0.4pt](-2,0.36)(2,0.36)
\psline[linestyle=dotted, linewidth=0.4pt](-2,0.12)(2,0.12)
\psline[linestyle=dotted, linewidth=0.4pt](-2,-1.8)(2,-1.8)
\psline[linestyle=dotted, linewidth=0.4pt](-2,-1.56)(2,-1.56)
\psline[linestyle=dotted, linewidth=0.4pt](-2,-1.32)(2,-1.32)
\psline[linestyle=dotted, linewidth=0.4pt](-2,-1.08)(2,-1.08)
\psline[linestyle=dotted, linewidth=0.4pt](-2,-0.84)(2,-0.84)
\psline[linestyle=dotted, linewidth=0.4pt](-2,-0.6)(2,-0.6)
\psline[linestyle=dotted, linewidth=0.4pt](-2,-0.36)(2,-0.36)
\psline[linestyle=dotted, linewidth=0.4pt](-2,-0.12)(2,-0.12)

\rput(2.2, 1.8){\tiny $-8$}
\rput(2.2, 1.56){\tiny $-7$}
\rput(2.2, 1.32){\tiny $-6$}
\rput(2.2, 1.08){\tiny $-5$}
\rput(2.2, 0.84){\tiny $-4$}
\rput(2.2, 0.6){\tiny $-3$}
\rput(2.2, 0.36){\tiny $-2$}
\rput(2.2, 0.12){\tiny $-1$}
\rput(2.2, -0.12){\tiny $1$}
\rput(2.2, -0.36){\tiny $2$}
\rput(2.2, -0.6){\tiny $3$}
\rput(2.2, -0.84){\tiny $4$}
\rput(2.2, -1.08){\tiny $5$}
\rput(2.2, -1.32){\tiny $6$}
\rput(2.2, -1.56){\tiny $7$}
\rput(2.2, -1.8){\tiny $8$}
%\rput(0.82,0.82){\scriptsize $P_1$}
%\rput(0.28,0){\scriptsize $P_2$}
%\rput(-1.2,0){\scriptsize $P_3$}
%\rput(-1.3,-1.85){\scriptsize $P_4$}
%\psline[linecolor=red]{->}(-0.05, 1.53)(-0.6,-1.483)
\end{pspicture}}

\caption{Splitting of the middle point. Here the noncrossing partition is $x_1=((3,-8))((7,5,-2))$ as in the left picture of Figure~\ref{fig:d_n}.}
\label{fig:d_n_2}

\end{figure}

\end{center}

\subsubsection{Explicit description of the map from noncrossing partitions to $c$-sortable elements}

As in type $A_n$, we introduce a total order $\prec$ on the set $\Pol(x)$ of polygons of $x\in\NC(W_{D_n}, c)$, in such a way that $P\prec Q$ if and only if $-Q\prec -P$. We begin by defining an order $<$ on the set $$\{ Q~|~ Q\in\Pol(x), ~Q=-Q\} \cup\{ P\cup-P~|~P\in\Pol(x), P\neq -P\}.$$
                           
Before doing it, we define a property of a pair $P\neq -P$ of polygons of $x$ (and the restriction of $\prec$ to such a pair, which will be useful for the definition of $\prec$ on the whole set $\Pol(x)$):

\begin{definition}[Special polygons]

Let $P\in\Pol(x)$ with $|P|\geq 2$ and $P\neq -P$. We say that the pair $\{P,-P\}$ is \defn{special} if
\begin{enumerate}
\item One of the two polygons has exactly one index in $\{1, 2\}$ with all other indices in $\{-1, \dots, -n\}$. Note that it can happen that both $P$ and $-P$ satisfy this condition,
\item There is no polygon lying between $P$ and $-P$ (meaning that if a horizontal line crosses both $P$ and $-P$, it crosses no polygon in between) and the polygon among $P$ and $-P$ having the smallest index, say $P$, has at most one vertex of $-P$ which lies strictly at its right. 
\end{enumerate}
In this situation we set $P\prec -P$.
\end{definition}

Note that there can be at most one pair of special polygons.

\begin{exple}
Let $c=(-4, -3, -1, 4, 3, 1)(2,-2)$, $x=((-4,-3,2))$. Writing $P=\{-4, -3,2\}$ we have that $\{P, -P\}$ is special with $P\prec -P$. Note that there is not a unique way of graphically representing $x$ in that case (see Figure~\ref{fig:d_n_nonunique}). Depending on how we represent the noncrossing partition, $P$ is stricly at the right of $-P$, or the opposite. Another example is given by $x=((-4,-2,1))$. In this last example, $P:=\{-4,-2,1\}$ appears strictly at the left of $-P$ in the noncrossing representation but we nevertheless have $P\prec -P$. This is one of the reason why we have to introduce special polygons: in situations where $P\neq -P$ but the pair is not special, in the case where $P$ appears strictly at the right of $-P$, we will have $P\prec -P$ (which is not the case in the last situation above with $x=((-4,-2,1))$, where $P\prec -P$ but $-P$ appears strictly at the right of $P$). 
\end{exple}

\begin{center}

\begin{figure}[h!]

\begin{tabular}{cc}

\psscalebox{1.01}{\begin{pspicture}(-2,-1.92)(2.5,1.8)%\showgrid
\pscircle[linecolor=gray, linewidth=0.2pt](0,0){1.8}
%\pscurve[linecolor=red, linewidth=1.5pt](1.2237,1.32)(1.3,0.86)(1.697,0.6)
%\pscurve[linecolor=red, linewidth=1.5pt](1.697,0.6)(1,0.85)(0,1)(-1.44,1.08)
%%\pscurve[linecolor=red, linewidth=1.5pt](1.2237,1.32)(1, 1.2)(0,1.07)(-1.44,1.08)
%\pscurve[linecolor=red, linewidth=1.5pt](1.7636,0.36)(0,0)(-1.796,0.12)
%\pscurve[linecolor=red, linewidth=1.5pt](-1.796,0.12)(-1.4, -0.12)(-1.7636,-0.36)
%\pscurve[linecolor=red, linewidth=1.5pt](-1.7636,-0.36)(0, -0.4)(1.59197,-0.84)
%\pscurve[linecolor=red, linewidth=1.5pt](1.59197,-0.84)(1.4, -0.24)(1.7636,0.36)

\pscurve[linecolor=red](0,1.8)(0.6, 1.5)(1.2237,1.32)
\psline[linecolor=red](1.2237,1.32)(0,-0.84)
\pscurve[linecolor=red](0,-0.84)(0.3,0.8)(0,1.8)

\pscurve[linecolor=red](0,-1.8)(-0.6, -1.5)(-1.2237,-1.32)
\psline[linecolor=red](-1.2237,-1.32)(0,0.84)
\pscurve[linecolor=red](0,0.84)(-0.3,-0.8)(0,-1.8)

%\psline[linecolor=red](0.897,1.56)(0,-0.32)
%\pscurve[linecolor=red](1.44,1.04)(0.8,0.35)(0,-0.36)
%\pscurve[linecolor=red](0.92,1.53)(1.15,1.1)(1.44,1.08)

%\pscurve[linecolor=red](0,1.8)(-1.1,0.85)(-1.697,-0.56)
%\pscurve[linecolor=red](0.02,1.76)(-0.65,0.85)(-1.697,-0.6)

%\pscurve[linecolor=red](0,-1.8)(1.1,-0.85)(1.697,0.56)
%\pscurve[linecolor=red](-0.02,-1.76)(0.65,-0.85)(1.697,0.6)

\psdots(0,1.8)(1.2237,1.32)(0,0.84)(1.74,0.36)(-1.74,-0.36)(0,-0.84)(-1.2237,-1.32)(0,-1.8)

%\psdots[linecolor=blue, fillcolor=blue](1.2237,1.32)(-1.59197,0.84)(1.7636,-0.36)(1.59197,-0.84)
%\psdots(1.2237,1.32)(-1.44,1.08)(1.796,0.12)(-1.44,-1.08)

\psline[linestyle=dotted, linewidth=0.4pt](-2,1.8)(2,1.8)
%\psline[linestyle=dotted, linewidth=0.4pt](-2,1.56)(2,1.56)
\psline[linestyle=dotted, linewidth=0.4pt](-2,1.32)(2,1.32)
%\psline[linestyle=dotted, linewidth=0.4pt](-2,1.08)(2,1.08)
\psline[linestyle=dotted, linewidth=0.4pt](-2,0.84)(2,0.84)
%\psline[linestyle=dotted, linewidth=0.4pt](-2,0.6)(2,0.6)
\psline[linestyle=dotted, linewidth=0.4pt](-2,0.36)(2,0.36)
%\psline[linestyle=dotted, linewidth=0.4pt](-2,0.12)(2,0.12)
\psline[linestyle=dotted, linewidth=0.4pt](-2,-1.8)(2,-1.8)
%\psline[linestyle=dotted, linewidth=0.4pt](-2,-1.56)(2,-1.56)
\psline[linestyle=dotted, linewidth=0.4pt](-2,-1.32)(2,-1.32)
%\psline[linestyle=dotted, linewidth=0.4pt](-2,-1.08)(2,-1.08)
\psline[linestyle=dotted, linewidth=0.4pt](-2,-0.84)(2,-0.84)
%\psline[linestyle=dotted, linewidth=0.4pt](-2,-0.6)(2,-0.6)
\psline[linestyle=dotted, linewidth=0.4pt](-2,-0.36)(2,-0.36)
%\psline[linestyle=dotted, linewidth=0.4pt](-2,-0.12)(2,-0.12)

\rput(2.2, 1.8){\tiny $-4$}
%\rput(2.2, 1.56){\tiny $-7$}
\rput(2.2, 1.32){\tiny $-3$}
%\rput(2.2, 1.08){\tiny $-5$}
\rput(2.2, 0.84){\tiny $-2$}
%\rput(2.2, 0.6){\tiny $-3$}
\rput(2.2, 0.36){\tiny $-1$}
%\rput(2.2, 0.12){\tiny $-1$}
%\rput(2.2, -0.12){\tiny $1$}
\rput(2.2, -0.36){\tiny $1$}
%\rput(2.2, -0.6){\tiny $3$}
\rput(2.2, -0.84){\tiny $2$}
%\rput(2.2, -1.08){\tiny $5$}
\rput(2.2, -1.32){\tiny $3$}
%\rput(2.2, -1.56){\tiny $7$}
\rput(2.2, -1.8){\tiny $4$}
%\rput(0.82,0.82){\scriptsize $P_1$}
%\rput(0.28,0){\scriptsize $P_2$}
%\rput(-1.2,0){\scriptsize $P_3$}
%\rput(-1.3,-1.85){\scriptsize $P_4$}
%\psline[linecolor=red]{->}(-0.05, 1.53)(-0.6,-1.483)
\end{pspicture}} &

\psscalebox{1.01}{\begin{pspicture}(-2,-1.92)(2.5,1.8)%\showgrid
\pscircle[linecolor=gray, linewidth=0.2pt](0,0){1.8}
%\pscurve[linecolor=red, linewidth=1.5pt](1.2237,1.32)(1.3,0.86)(1.697,0.6)
%\pscurve[linecolor=red, linewidth=1.5pt](1.697,0.6)(1,0.85)(0,1)(-1.44,1.08)
%%\pscurve[linecolor=red, linewidth=1.5pt](1.2237,1.32)(1, 1.2)(0,1.07)(-1.44,1.08)
%\pscurve[linecolor=red, linewidth=1.5pt](1.7636,0.36)(0,0)(-1.796,0.12)
%\pscurve[linecolor=red, linewidth=1.5pt](-1.796,0.12)(-1.4, -0.12)(-1.7636,-0.36)
%\pscurve[linecolor=red, linewidth=1.5pt](-1.7636,-0.36)(0, -0.4)(1.59197,-0.84)
%\pscurve[linecolor=red, linewidth=1.5pt](1.59197,-0.84)(1.4, -0.24)(1.7636,0.36)

\pscurve[linecolor=red](0,1.8)(0.6, 1.5)(1.2237,1.32)
\pscurve[linecolor=red](1.2237,1.32)(-0.28,0.8)(0,-0.84)
\pscurve[linecolor=red](0,-0.84)(-0.5,0.8)(0,1.8)

\pscurve[linecolor=red](0,-1.8)(-0.6, -1.5)(-1.2237,-1.32)
\pscurve[linecolor=red](-1.2237,-1.32)(0.28,-0.8)(0,0.84)
\pscurve[linecolor=red](0,0.84)(0.5,-0.8)(0,-1.8)
%\psline[linecolor=red](0.897,1.56)(0,-0.32)
%\pscurve[linecolor=red](1.44,1.04)(0.8,0.35)(0,-0.36)
%\pscurve[linecolor=red](0.92,1.53)(1.15,1.1)(1.44,1.08)

%\pscurve[linecolor=red](0,1.8)(-1.1,0.85)(-1.697,-0.56)
%\pscurve[linecolor=red](0.02,1.76)(-0.65,0.85)(-1.697,-0.6)

%\pscurve[linecolor=red](0,-1.8)(1.1,-0.85)(1.697,0.56)
%\pscurve[linecolor=red](-0.02,-1.76)(0.65,-0.85)(1.697,0.6)

\psdots(0,1.8)(1.2237,1.32)(0,0.84)(1.74,0.36)(-1.74,-0.36)(0,-0.84)(-1.2237,-1.32)(0,-1.8)

%\psdots[linecolor=blue, fillcolor=blue](1.2237,1.32)(-1.59197,0.84)(1.7636,-0.36)(1.59197,-0.84)
%\psdots(1.2237,1.32)(-1.44,1.08)(1.796,0.12)(-1.44,-1.08)

\psline[linestyle=dotted, linewidth=0.4pt](-2,1.8)(2,1.8)
%\psline[linestyle=dotted, linewidth=0.4pt](-2,1.56)(2,1.56)
\psline[linestyle=dotted, linewidth=0.4pt](-2,1.32)(2,1.32)
%\psline[linestyle=dotted, linewidth=0.4pt](-2,1.08)(2,1.08)
\psline[linestyle=dotted, linewidth=0.4pt](-2,0.84)(2,0.84)
%\psline[linestyle=dotted, linewidth=0.4pt](-2,0.6)(2,0.6)
\psline[linestyle=dotted, linewidth=0.4pt](-2,0.36)(2,0.36)
%\psline[linestyle=dotted, linewidth=0.4pt](-2,0.12)(2,0.12)
\psline[linestyle=dotted, linewidth=0.4pt](-2,-1.8)(2,-1.8)
%\psline[linestyle=dotted, linewidth=0.4pt](-2,-1.56)(2,-1.56)
\psline[linestyle=dotted, linewidth=0.4pt](-2,-1.32)(2,-1.32)
%\psline[linestyle=dotted, linewidth=0.4pt](-2,-1.08)(2,-1.08)
\psline[linestyle=dotted, linewidth=0.4pt](-2,-0.84)(2,-0.84)
%\psline[linestyle=dotted, linewidth=0.4pt](-2,-0.6)(2,-0.6)
\psline[linestyle=dotted, linewidth=0.4pt](-2,-0.36)(2,-0.36)
%\psline[linestyle=dotted, linewidth=0.4pt](-2,-0.12)(2,-0.12)

\rput(2.2, 1.8){\tiny $-4$}
%\rput(2.2, 1.56){\tiny $-7$}
\rput(2.2, 1.32){\tiny $-3$}
%\rput(2.2, 1.08){\tiny $-5$}
\rput(2.2, 0.84){\tiny $-2$}
%\rput(2.2, 0.6){\tiny $-3$}
\rput(2.2, 0.36){\tiny $-1$}
%\rput(2.2, 0.12){\tiny $-1$}
%\rput(2.2, -0.12){\tiny $1$}
\rput(2.2, -0.36){\tiny $1$}
%\rput(2.2, -0.6){\tiny $3$}
\rput(2.2, -0.84){\tiny $2$}
%\rput(2.2, -1.08){\tiny $5$}
\rput(2.2, -1.32){\tiny $3$}
%\rput(2.2, -1.56){\tiny $7$}
\rput(2.2, -1.8){\tiny $4$}
%\rput(0.82,0.82){\scriptsize $P_1$}
%\rput(0.28,0){\scriptsize $P_2$}
%\rput(-1.2,0){\scriptsize $P_3$}
%\rput(-1.3,-1.85){\scriptsize $P_4$}
%\psline[linecolor=red]{->}(-0.05, 1.53)(-0.6,-1.483)
\end{pspicture}}

\end{tabular}

\caption{Non unique graphical representation of the noncrossing partition $x=((-4,-3,2))$.}
\label{fig:d_n_nonunique} 

\end{figure}
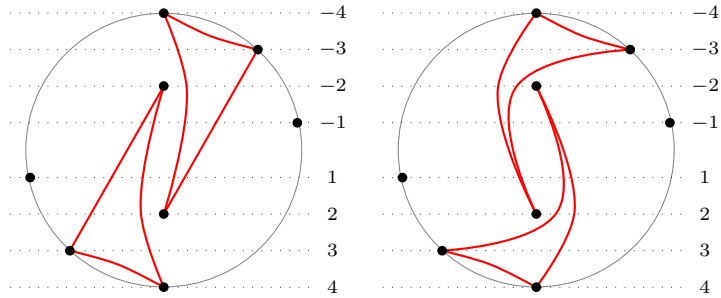

\end{center}

%More generally, we define a relation $\triangleleft$ on any pair of polygons $P\neq -P$ by setting, in case $\{P, -P\}$ is not special, that $P\prec -P$ if either $P$ lies strictly at the right of $-P$, or in case there is no polygon strictly at the right of the other, in case $\min (P) < \min (-P)$. Note that the representation of $x$ by disjoint unions of polygons is not unique and in some cases, there can be a pair of polygons $P\cup -P$ such that in one representation we have $P$ lying strictly at the right of $-P$ and the opposite in another representation (see for instance~\cite{BG}), but in fact this happens only if $P\cup -P$ is special (and there are some cases where the pair is special and there is no ambiguity in the picture, but one has nevertheless to choose as first for $\prec$ the polygon which is not at the right of the other; the reason for this comes from the combinatorics of the type $D_n$ Artin group and is quite subtle, see Remark~\ref{}). 

The order $<$ is defined as follows. We apply an inductive procedure similar to the one defined in type $A_n$ when we defined $\prec$, removing polygons or pairs of polygons at each step. We distinguish three situations (note that we define relative positions of polygons exactly as in type $A_n$, see Definition~\ref{def_rel}): 
\begin{enumerate}
\item Assume that $x$ has no pair of special polygons. In the graphical representation of $x$, let $P$ be the unique polygon such that $P$ has no polygon lying strictly at its right and $\max (P)$ is minimal among the polygons satisfying this property. Then $P\cup -P$ appears first in the total order, and if $P\neq -P$ we set $P\prec -P$. We then remove $P \cup -P$ and all its vertices from the graphical representation. 
\item If there is a pair of special polygons $\{P, -P\}$ with $P\prec -P$ and $P$ has a polygon $Q\neq -P$ strictly at its right, then we do as in the first case. Note that since there is at most one pair of special polygons, the pair which will be removed in this case is not special since there is at least one polygon which lies strictly at the right of $P$.  
\item If there is a special pair of polygons $\{P, -P\}$ with $P\prec -P$ and there is no polygon $Q$ distinct from $-P$ lying strictly at the right of $P$, then consider the unique polygon $Q$ in $\Pol(x)\backslash\{-P\}$ having no polygon lying strictly at its right and such that $\max(Q)$ minimal (note that we can have $Q$=$P$). If $\max(Q) < \max(P)$ then $Q\cup -Q$ appears first in the total order and we set $Q\prec -Q$ (note that we cannot have $Q=-Q$), and we remove them as well as their vertices from the graphical representation. Otherwise $P\cup -P$ appears first in the total order, and we remove it as well. 
\end{enumerate}

We then go on inductively. Note that a polygon $Q$ with $Q=-Q$ is necessarily the top element for $<$. 

The order $\prec$ is then defined as follows. If there is no symmetric polygon in $\Pol(x)$, label the polygons of $x$ such that $$(P_1 \prec -P_1) < (P_2\prec -P_2) < \dots < (P_{k} \prec -P_{k}).$$ Then set $$P_1\prec P_2 \prec \dots \prec P_k \prec -P_k \prec \dots \prec -P_2 \prec - P_1.$$
If there is $Q\in\Pol(x)$ which is symmetric, then as noticed above $Q$ has to be the top element for $<$. Labeling the polygons of $x$ such that  $$(P_1 \prec -P_1) < (P_2\prec -P_2) < \dots < (P_{k} \prec -P_{k}) <Q,$$ we then set $$P_1\prec P_2 \prec \dots \prec P_k \prec Q \prec -P_k \prec \dots \prec -P_2 \prec - P_1.$$   

\begin{exple}
Let $x_2=((8,7,5))[4,6,3][2]$ be as in Figure~\ref{fig:d_n}. We have $$(\{-8,-7,-5\} \prec \{8,7,5\}) < (\{-1\} \prec \{1\}) < \{\pm 2, \pm 3, \pm 4, \pm 6 \},$$
Hence the order $\prec$ is given by $$\{-8,-7,-5\} \prec \{-1\} \prec \{ \pm 2, \pm 3, \pm 4, \pm 6\} \prec \{1\} \prec \{8,7,5\}.$$
\end{exple}

We use the order $\prec$ to define a permutation $S_c(x)\in W_{D_n}$ in a similar way as we did in type $A_n$. We first define a permutation $S_c(x)'$ as follows: if $P$ is first in the total order $\prec$, then we order the vertices of $P$ as $\{ d_1 > d_2 > \dots > d_k > -d_k > \dots > -d_2 > -d_1\}$ and define $S_c(y)$ on the subset $\{ -n, \dots, -n+k-1\}$ by $-n \mapsto d_1, \dots, -n+k-1\mapsto d_k$. We then consider the next $Q$ appearing in $\prec$, and define $S_c(x)$ in the same way on the subset $\{-n+k, \dots, -n+k+\ell-1\}$, and so on inductively.

Note that since $P\prec Q$ if and only if $-Q \prec - P$, the permutation $S_c(x)'$ is signed, but in some cases it might not lie in $W_{D_n}$. If $S_c(x)'\in W_{D_n}$ then we set $S_c(x):=S_c(x)'$. If $S_c(x)'$ does not lie in $W_{D_n}$, then in the line notation $\pi_{-n} \cdots \pi_{-1} \pi_1 \cdots \pi_n$ of the permutation $S_c(x)'$, we swap the two centermost entries, that is, the entries $\pi_1$ and $\pi_{-1}$ to get a new line notation and define $S_c(x)\in W_{D_n}$ as the unique permutation having this line notation. 

\begin{exple}\label{ex_dn}
Let $x_1=((3, -8))((7,5,-2))$ be as in Figures~\ref{fig:d_n} and \ref{fig:d_n_2}. The total order $<$ is given by $$\pm\{1\} < \pm\{4\} < \pm \{6\} < \pm\{8, -3\} < \pm\{2,-5,-7\}.$$
The first half of the order $\prec$ is given by $$\{-1\}\prec\{4\}\prec\{6\}\prec\{8,-3\}\prec\{-7,-5,2\}.$$
We have $\{-1\}\prec \{1\}$, hence we set $S_c(x_1)'(-n)=-1$, (and therefore $S_c(x_1)'(n)=1$). Going on inductively, we get (in the line notation) a permutation $$S_c(x_1)'=(-1) 4 6 8 (-3) 2 (-5) (-7) 7 5 (-2) 3 (-8) (-6) (-4) 1.$$  We have that $|\{ i\in \{1,\dots, n\}~|~S_c(x_1)'(i)<0\}|=4$, hence the above permutation is in $W_{D_n}$ and we set $S_c(x_1)=S_c(x_1)'$. 

Using the characterization of $c$-sortable elements in type $D_n$ given by \cite[Lemma 4.11]{Reading}, we see that $S_c(y)$ is $c$-sortable. Moreover its cover reflections are given by $((5,7))$, $((-2,5))$, $((3, -8))$ (they are obtained as follows: if we denote the line notation of the permutation by $\pi_{-n} \cdots \pi_{-1} \pi_1 \cdots \pi_n$, then the cover reflections are precisely those $((\pi_i, \pi_{i+1}))$ such that $\pi_{i+1} >  \pi_i$, where $i=1,\dots, n-1$, and $((\pi_{-1}, \pi_2))$ in the case where $\pi_2 < \pi_{-1}$), which implies that $\Read(S_c(x_1))=x_1$. This is a general fact, proven in Theorem~\ref{thm_d} below. 
\end{exple}

\begin{rmq}\label{rmq:initial_d}
The following can be checked by a direct computation. If $s=s_i$, $i\geq 2$, then $s$ is initial in $c$ if and only if $i\in L_c$, $i+1\in R_c$. If $i_1=2$, then $s_0$ is initial in $c$ if and only if $-1\in L_c$, while $s_1$ is initial if and only if $1\in L_c$. If $i_1=1$, then $s_0$ is initial in $c$ if and only if $s_1$ is initial in $c$, which happens if and only if $-2$ is in $L_c$.    
\end{rmq}

\begin{theorem}\label{thm_d}
Let $x\in\NC(W_{D_n}, c)$. The element $S_c(x)\in W_{D_n}$ is $c$-sortable and we have $\Read(S_c(x))=x$. That is, we have $S_c=\Read^{-1}$.
\end{theorem}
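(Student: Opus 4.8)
The strategy mirrors the one used in type $A_n$ for Theorem~\ref{a}, but rather than relying on a pattern-avoidance characterization of $c$-sortable elements I would argue directly by Cambrian recurrence, using Lemmatas~\ref{lem:read1} and~\ref{lem:read2}; this is in fact the approach advertised just after Theorem~\ref{a}. So the plan is to induct on the rank $n$ and on the length $\ell(S_c(x))$, letting $c$ vary over all standard Coxeter elements of type $D_n$. The base cases (small rank, or $x=c$ so that $S_c(x)=e$) are immediate from the definitions. For the inductive step, fix $s$ initial in $c$; Remark~\ref{rmq:initial_d} translates initiality into a combinatorial condition on $L_c$, which via the $c$-labeling says exactly which point of the circle is ``swept first.'' One then has to trace through the combinatorial definition of $S_c$ and check that it is compatible with the recurrences: namely that applying the recurrence $c \rightsquigarrow sc$ on $W_{<s>}$ (when $s S_c(x) > S_c(x)$) or $c \rightsquigarrow scs$ on $W_{D_n}$ (when $s S_c(x) < S_c(x)$) corresponds on the noncrossing-partition side to the known behaviour of the Kreweras-type operations on $\Pol(x)$ and on the order $\prec$.

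Concretely, I would split into the cases dictated by whether the reflection $s$ (equivalently the pair of points $\{c^{-1}(i),i\}$ or the symmetric data for $s_0,s_1$) is a ``cover reflection'' of $S_c(x)$ or not, exactly paralleling the three-case analysis in the proof of Theorem~\ref{thm:main} and the type-$A$ arguments (Lemma~\ref{order_polygons_a}). The key structural fact to isolate and prove is the type-$D_n$ analogue of Lemma~\ref{order_polygons_a}: if $s=s_i$ is initial and the two points $c^{-1}(i)$ and $i$ of the circle lie in distinct polygons $P_1, P_2$ of $x$, then $P_2 \prec P_1$ in the order defined above; the delicate sub-case is when $i\le 2$ (so $s_0$ or $s_1$), where the middle point has been split and the notion of ``special polygon'' intervenes, and one must check that the tie-breaking rules in the three-case definition of $<$ interact correctly with the half-turn symmetry $P\prec Q \Leftrightarrow -Q \prec -P$. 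Granting this, removing the first polygon (or special pair) in $\prec$ from the diagram realizes precisely the passage to $W_{<s>}$ or conjugation by $s$, and the definition of $S_c$ on the remaining diagram is, by induction, the $(sc)$- or $(scs)$-sortable element with the correct cover reflections.

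For the second assertion, $\Read(S_c(x)) = x$, I would argue as in type $A_n$: by Theorem~\ref{thm_reading} it suffices to show that the set of cover reflections of the permutation $S_c(x)$ (read off from its line notation $\pi_{-n}\cdots\pi_{-1}\pi_1\cdots\pi_n$ as the $((\pi_j,\pi_{j+1}))$ with $\pi_{j+1}>\pi_j$, together with $((\pi_{-1},\pi_2))$ when $\pi_2<\pi_{-1}$, exactly as in Example~\ref{ex_dn}) coincides with the set of canonical generators of $P(x)$, i.e.\ with the pairs of consecutive vertices within a common polygon of $x$. By the block-wise definition of $S_c(x)$, consecutive vertices of one polygon become adjacent descents in the line notation; what remains is to verify that no spurious cover reflection appears \emph{between} two $\prec$-consecutive polygons, and that the possible swap of the two centermost entries (done precisely when $S_c(x)'\notin W_{D_n}$) does not create or destroy a cover reflection of the wrong kind. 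This last point, together with the $i\le 2$ sub-case of the order lemma, is where I expect the real work to be; everything else is a faithful but somewhat lengthy transcription of the type-$A$ arguments into the signed-cycle bookkeeping of~\cite{AR} and~\cite{BG}.
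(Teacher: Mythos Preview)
Your high-level plan (Cambrian recurrence, varying $c$) is exactly the paper's approach. But several details are off and would send you down the wrong track.

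First, the paper inducts in the other direction: it takes a $c$-sortable $y$ and proves $S_c(\Read(y))=y$. This is more natural because Lemmatas~\ref{lem:read1} and~\ref{lem:read2} are statements about sortable elements, so the case split ($sy>y$ versus $sy<y$, cover reflection or not) applies directly to $y$, and \cite[Lemma 6.5]{Reading} tells you exactly what $\Read_{scs}(sy)$ is. Your framing (start from $x$, analyse $S_c(x)$) forces you to re-derive these facts from the combinatorial definition of $S_c$ before you can even split into cases. It is workable but more awkward, and your separate verification of $\Read(S_c(x))=x$ via cover reflections becomes unnecessary once you have $S_c\circ\Read=\mathrm{id}$.

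Second, the type-$D_n$ analogue of Lemma~\ref{order_polygons_a} is \emph{not} the key structural fact for this theorem. Those lemmas (they become Lemmatas~\ref{lem:conf} and~\ref{lem:confbis}) concern the polygons of the Kreweras complement $y$ and are used for Lemma~\ref{lem:casparcasbis}, proved \emph{after} Theorem~\ref{thm_d}. What the proof of Theorem~\ref{thm_d} actually needs is different: in the cover-reflection case you must check that passing from $\Read(y)$ to $\Read(y)s$ splits one polygon $P$ into $P_1\prec P_2$ while leaving the rest of $\prec$ unchanged; in the non-cover-reflection case you must check that passing to $s\Read(y)s$ relabels two indices in $\pm P,\pm Q$ and (up to the centermost swap in a few $s_0/s_1$ configurations) preserves $\prec$. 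Neither of these is ``removing the first polygon in $\prec$''.

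Finally, your base case is wrong: $S_c(c)=w_0$, not $e$. The length-zero base case is $y=e$ (equivalently $x=e$), for which $S_c(e)=e$ is immediate.
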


\begin{proof}
Let $x\in \NC(W_{D_n}, c)$ and let $y=\Read^{-1}(x)$. We show by Cambrian recurrence that $S_c(\Read(y))=y$, which concludes: note that here the induction is on the rank of a Coxeter group of type $D_n$ and the length of Coxeter sortable elements inside it, i.e., if $y$ is $c$-sortable in $W$ we will assume that $S_{c'}(\Read(y'))=y'$ for every $c'$-sortable element $y'\in W$ with $\ell(y') < \ell(y)$ and for every  sortable element $y'$ in a standard parabolic subgroup $W'$ of $W$ of type $D_k$, $k<n$ (with $D_1=A_1, D_2=A_1\times A_1, D_3=A_3$). If $y=1$ then the claim is trivial. If $y=s\in S$, then $x$ has two polygons $P_1=\{i, i+1\}$, $P_2=\{-i, -i-1\}$ with $P_2\prec P_1$ in the case where $s=s_i$ for some $i\geq 1$. We have $\Read(y)=y$ and it follows from the definition of $S_c(y)$ that $S_c(y)=y$, which shows the claim. If $y=s_0$, then we have two polygons $P_1=(-1, 2)$, $P_2=(1, -2)$. The pair is special and we have $P_2\prec P_1$. The permutation which we obtain as $S_c(y)'$ is $(-2, 1, 2, -1)$, but this is not in $W_{D_n}$ hence to get $S_c(y)$ we permute the two centermost entries in the line notation of $S_c(y)'$. This yields $S_c(y)=y=s_0$. If $W$ has type $A_1\times A_1$ (here we use the same graphical model with one simple reflection being $s_0$ and the other one being $s_1$), then it is again easy to check the claim directly. 

Now assume that $\ell(y)>0$. Let $s$ be initial in $c$.\\ 

\noindent\textbf{First case: $sy > y$}. By Lemma~\ref{lem:read1}, it implies that $y$ and $\Read(y)\in W_{<s>}$ and that $y$ is $sc$-sortable. We have $\Read_c(y)=\Read_{sc}(y)$. We want to show that $S_c(\Read(y))=y$. If $W_{<s>}$ is of type $D_{n-1}$, then $s=s_{n-1}$ and by induction we have that $S_{sc}(\Read_{sc}(y))=y$. Now $\Read_{sc}(y)=\Read_c(y)$ and this noncrossing partitions fixes $-n$. It follows that $S_{c}(\Read_c(y))$ is obtained from $S_{sc}(\Read_{sc}(y))$ by extending the permutation by a trivial action on $\{-n, n\}$ which shows the claim. Now assume that $s=s_i$, $i\in\{3, \dots, n-1\}$. It follows that $W_{<s>}$ is a product of a Coxeter group $W_1$ of type $D_k$ for some $k\geq 3$ and a Coxeter group $W_2$ of type $A$. In this situation $y$ decomposes uniquely as a product $y=y_1 y_2$ with $y_i\in W_i$ and each $y_i$ is sortable in the corresponding factor for the restrictions $(sc)_1$ and $(sc)_2$ of $sc$ to these factors. We have that $\Read_c(y)=\Read_{sc}(y)=\Read_{(sc)_1}(y_1)\Read_{(sc)_2}(y_2)$ (with $\Read_{(sc)_i}(y_i)\in W_i$) and it is not difficult to observe by construction of the maps $S_{c'}$ (for various $c'$ here) that $S_{c}(\Read_{(sc)_1}(y_1)\Read_{(sc)_2}(y_2))=S_{(sc)_1}(\Read_{(sc)_1}(y_1)) S_{(sc)_2}(\Read_{(sc)_2}(y_2))$, where $S_{(sc)_2}$ coincides with the map which we defined in type $A$ (here $W_2$ is isomorphic to its restriction to the $W_2$-stable subset $\{i+1, \dots, n\}$, which is isomorphic to the symmetric group $\mathfrak{S}_{n-i}$. We use this identification implicitly). Hence by induction on the rank together with the type $A$ situation we get $S_{c}(\Read(y))= y_1 y _2=y$. The case where $s=s_2$ is similar but with a Coxeter group of type $A_1\times A_1$ insted of a Coxeter group of type $D_n$. In the case where $s=s_1$ (resp. $s=s_0$), the map $S_{sc}$ coincides with the map which we defined on a Coxeter group of type $A_{n-1}$ (in this group the subset $\{1, \dots, n\}$ (resp. $\{-1, 2, \dots, n\}$) is stable by the action of every permutation, the isomorphism with $\mathfrak{S}_{n}$ is immediate).\\ 

\noindent\textbf{Second case: $sy < y$ and $s$ is a cover reflection of $y$}. This precisely means that $s\leq_T \Read(y)$ (by Theorem~\ref{thm_reading}), hence that $s$ corresponds to a symmetric pair of edges or diagonals of either a symmetric polygon $Q$ or a pair $P\neq -P$ of polygons of $x=\Read(y)$ (if $Q=-Q$ and $s=s_0$ or $s_1$, we also call \textit{diagonal} a segment joining a vertex of $Q$ to one of the middle points). The element $sy$ is $scs$-sortable by Lemma~\ref{lem:read2} and by induction on length we have that $S_{scs}(\Read_{scs}(sy))=sy$. Note that that $\Read_{scs}(sy)= \Read(y)s$ by~\cite[Lemma 6.5]{Reading}. Hence to prove the claim it suffices to show that $$s S_{scs}(\Read(y)s)=S_c(\Read(y)).$$
Assume that $s=s_i$, $i\geq 2$. Then $i\in L_c$, $i+1\in R_c$ (see Remark~\ref{rmq:initial_d}). Assume that $(i, i+1)$ is a diagonal of a polygon $P$ with $P\neq -P$ of $x=\Read(y)$. The $scs$-noncrossing partition $\Read(y) s$ is given by splitting $P$ into two polygons $P_1=\{j\in P~|~j> i+1\}\cup\{i\}$ and $P_2=\{j\in P~|~j <i\} \cup\{i+1\}$ (see Figure~\ref{fig:d_n_1}). But the labeling corresponding to $scs$ is obtained from that of $c$ by moving $i$ to $R_c$ and $i+1$ to $L_c$. Hence if we compare the total order on polygons of $\Read(y)$ to that of $\Read(y)s$, in the case where $P\prec -P$ we get that $P$ is replaced by $P_1\prec P_2$ (and $-P$ consistently) while everything else is unchanged: every polygon which is removed before (resp. after) $P$ in $\Read(y)$ stays removed before $P_1$ (resp. after $P_2$) in the same order, while there can be nothing removed in between. Now if $-P\prec P$, then $-P$ is replaced by $(-P_2)\prec(-P_1)$. Hence in both cases, $P$ is replaced by $P_1\prec P_2$. Moreover, if $P=\{ d_1 > ... > d_\ell > i+1 > i > d_{\ell+1} >\dots > d_m\}$, then the line notations of $S_c(\Read(y))'$ and of $S_{scs}(\Read(y)s)'$ coincide except in the entries corresponding to $P$ (and $-P$), where in $S_c(\Read(y))'$ we have $$d_1 \cdots d_\ell (i+1) i d_{\ell+1} \cdots d_m$$ while for $S_{scs}(\Read(y)s)'$ we have  $$d_1 \cdots d_\ell i (i+1) d_{\ell+1} \cdots d_m$$ (and consistently for $-P$). This shows that $s S_{scs}(\Read(y)s)'=S_c(\Read(y))'$, which implies that $s S_{scs}(\Read(y)s)=S_c(\Read(y))$. Now if $(i, i+1)$ is a diagonal of $Q=-Q$, the proof is similar and left to the reader (note that in that case $Q$ is central in the total order $\prec$). 

Now assume that $s=s_0$ and assume in addition that $i_1=2$ (we then have $-1\in L_c$ by Remark~\ref{rmq:initial_d}). If $s$ is a diagonal of a symmetric pair of polygons $P\neq -P$, then $P$ is split into two polygons $P_2$, $P_1$ as in the above case and one checks that we get the same as in the case $s=s_i, i\geq 2$: $P$ is replaced by $P_1 \prec P_2$ while everything stays unchanged everywhere else (in that case passing from $c$ to $scs$ moves $\pm 1$ to the middle while $2$ is moved to $L_c$). Assume that $s$ corresponds to a pair of diagonals of $Q=-Q$. Note that $Q$ is central in the total order $\prec$ on $\Pol(\Read(y))$. Writing the permutation corresponding to $Q$ as $(-j_1, \dots, -j_\ell, j_1, \dots, j_\ell)(2, -2)$ with $j_i\geq 1$, $j_1=1$, $j_i < j_{i+1}$, in the graphical representation of $\Read(y) s$ we have that the polygon $Q$ is replaced by a pair of symmetric polygons $P, -P$ with $P=\{j_2, \dots, j_\ell, -j_1=-1, -2\}$. Now in the labeling corresponding to $scs$, the indices $1$ and $-2$ get moved as described above which yields $P\prec -P$ ($P$ is strictly at the right of $-P$ and the pair cannot be special). Moreover, in the total order $\prec$ on the polygons, nothing gets changed for the other polygons which $\Read(y)$ and $\Read(y)s$ have in commmon. Hence if we compare $S_c(\Read(y))'$ with $S_{scs}(\Read(y)s)'$, there is only a difference in the entries corresponding to $Q$ / $P\cup -P$, in the $4$ centermost entries which in the first case are $2 1 (-1) (-2)$ while in the second case they are $(-1) (-2) 2 1$. This shows that $s S_{scs}(\Read(y)s)'=S_c(\Read(y))'$, which implies that $s S_{scs}(\Read(y)s)=S_c(\Read(y))$, which is precisely what we wanted to show. The case $s=s_1$, as well as the cases where $i_1=1$ are similar and left to the reader.\\ 

\noindent\textbf{Third case: $sy < y$ and $s$ is not a cover reflection of $y$}. It means that $s\notin\leqt \Read(y)$, but $y, \Read(y)\notin W_{<s>}$. Assume that $s=s_i$, $i\geq 2$. In that case by~\cite[Lemma 6.5]{Reading} we have that $\Read_{scs}(sy)= s \Read(y) s$. Let $s=s_i$, $i\geq 2$. We have $i\in L_c$, $i+1\in R_c$ by Remark~\ref{rmq:initial_d}. If $i\in P\in\Pol(\Read(y))$, then $i+1\in Q\in Pol(\Read(y))$ with $Q\neq P$ (otherwise we would have $s\leq_T\Read(y)$). Note that the fact that $\Read(y)\notin W_{<s>}$ forces at least one polygon to have an index $\leq i$ and another index $\geq i+1$. When passing from $\Read(y)$ to $s\Read(y)s$, the polygon $P$ is replaced by a polygon $\overline{P}$ where the index $i$ is replaced by $i+1$, and $Q$ is replaced by $\overline{Q}$ where $i+1$ is replaced by $i$ (and $-Q, -P$ are replaced by $-\overline{Q}$, $-\overline{P}$). Since the labeling for $scs$ is the same as for $c$ except that $i\in R_{scs}$ and $i+1\in L_{scs}$, it is easy to check (by distinguishing several possible relative configurations of polygons) that the total order $\prec$ on polygons of $\Read(y)$ and $s\Read(y)s$ is the same except that $P, Q$ are replaced by $\overline{P}$, $\overline{Q}$ (The condition that there must be at least one polygon with an index $\leq i$ and another one $\geq i+1$ has to be used here to check this fact, otherwise it can be wrong that the order after overlining $\pm P, \pm Q$ stays the same; note that the cases $P=-P$ or $Q=-Q$ can appear. See Figure~\ref{fig:d_n_autre}, where one of the situations is treated). It follows that the entries corresponding to $S_c(\Read(y))'$ and to $S_{scs}(s\Read(y) s)'$ are the same except that $i$ and $i+1$ are exchanged, hence that $s S_c(\Read(y))= S_{scs}(s \Read(y) s)$, which is what we have to show.

Now assume that $i_1=1$ and $s=s_0$, which implies that $-2\in L_c$ (see Remark~\ref{rmq:initial_d}). Again, the situation is such that $-1\in P$, $2\in Q$ with $Q\neq P$ (otherwise $s\leq_T \Read(y)$). The condition $\Read(y)\notin W_{<s>}$ guarantees that there must be at least one polygon containing both positive and negative indices. Assume that $P\neq -P$; the case where $P=-P$ is similar and left to the reader. Note that $Q\neq -Q$, otherwise $\pm 1\in Q$ (which implies that $P=Q$). In $s\Read(y) s$, we have that $1\in L_c$ while the middle indices are $\pm 2$. To get $s\Read(y)s$ from $\Read(y)$, one replaces the polygon $P$ by a polygon $\overline{P}$ with $2$ instead of $-1$, while $Q$ is replaced by a polygon $\overline{Q}$ with $-1$ instead of $2$ (and $-Q, -P$ are replaced by $-\overline{Q}$, $-\overline{P}$). We have to show that $s S_c(\Read(y))= S_{scs}(s \Read(y) s)$ and we proceed as in the previous case to compare the order $\prec$ on $\Read(y)$ with the order $\prec$ on $s\Read(y)s$. In most of the cases, the order $\prec$ on polygons of $\Read(y)$ after overlining $P$ and $Q$ coincides with the order $\prec$ on polygons of $s\Read(y)s$, but in some situations it may happen that it is not the case; but in these situations, the only difference is that the polygon $P$ is reduced to the index $\{-1\}$, hence it is directly followed in $\prec$ by $-P$, while $\{-2\}=-\overline{P}$ is directly followed in $\prec$ by $\overline{P}$ (see Figure~\ref{fig:d_n_autre_2} for an example). Hence in that case, the two centermost entries in the line notation of either $S_c(\Read(y))$ or $S_{scs}(s\Read(y)s)$ (which correspond to the indices of $\pm P$ and $\pm \overline{P}$ respectively) have to be exchanged to get an element in $W_{D_n}$. We graphically represented two of the various possible situations in Figure~\ref{fig:d_n_autre_2} and \ref{fig:d_n_autre_3} and leave it to the reader to check it in all the other situations since this is elementary geometry, depending on which polygon (among $\pm P$, $\pm Q$ or another one) has both positive and negative indices. It follows that in the line notation for $S_{scs}(s\Read(y) s)'$ and $S_c(\Read(y))'$, we have the same entries (up to permutation of the two centermost entries in the case where one of the two permutations is not in $W_{D_n}$) except that $1$ and $-2$ (and $-1$ and $2$) are exchanged. This shows that $s S_c(\Read(y))= S_{scs}(s \Read(y) s)$.

The case $s=s_1$ as well as the cases $i_1=2$, $s=s_0$ or $s=s_1$ are similar, and left to the reader.

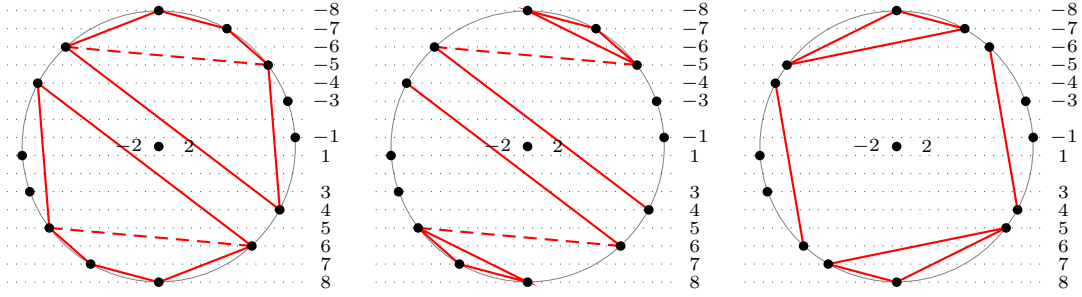
\begin{figure}  

\begin{tabular}{ccc}

\begin{pspicture}(-2,-1.92)(2.5,1.92)%\showgrid
\pscircle[linecolor=gray, linewidth=0.2pt](0,0){1.8}
%\pscurve[linecolor=red, linewidth=1.5pt](1.2237,1.32)(1.3,0.86)(1.697,0.6)
%\pscurve[linecolor=red, linewidth=1.5pt](1.697,0.6)(1,0.85)(0,1)(-1.44,1.08)
%%\pscurve[linecolor=red, linewidth=1.5pt](1.2237,1.32)(1, 1.2)(0,1.07)(-1.44,1.08)
%\pscurve[linecolor=red, linewidth=1.5pt](1.7636,0.36)(0,0)(-1.796,0.12)
%\pscurve[linecolor=red, linewidth=1.5pt](-1.796,0.12)(-1.4, -0.12)(-1.7636,-0.36)
%\pscurve[linecolor=red, linewidth=1.5pt](-1.7636,-0.36)(0, -0.4)(1.59197,-0.84)
%\pscurve[linecolor=red, linewidth=1.5pt](1.59197,-0.84)(1.4, -0.24)(1.7636,0.36)

\psline[linecolor=red](0,-1.8)(1.2237,-1.32)(-1.59197,0.84)(-1.44,-1.08)(-0.897,-1.56)(0,-1.8)

\psline[linecolor=red](0,1.8)(-1.2237,1.32)(1.59197,-0.84)(1.44,1.08)(0.897,1.56)(0,1.8)

\psline[linecolor=red, linestyle=dashed](-1.44,-1.08)(1.2237,-1.32)
\psline[linecolor=red, linestyle=dashed](1.44,1.08)(-1.2237,1.32)

\psdots(0,1.8)(0.897,1.56)(-1.2237,1.32)(1.44,1.08)(-1.59197,0.84)(1.697,0.6)(0,0)(1.796,0.12)(-1.796,-0.12)(-1.697,-0.6)(1.59197,-0.84)(-1.44,-1.08)(1.2237,-1.32)(-0.897,-1.56)(0,-1.8)

%\psdots[linecolor=blue, fillcolor=blue](1.2237,1.32)(-1.59197,0.84)(1.7636,-0.36)(1.59197,-0.84)
%\psdots(1.2237,1.32)(-1.44,1.08)(1.796,0.12)(-1.44,-1.08)

\psline[linestyle=dotted, linewidth=0.4pt](-2,1.8)(2,1.8)
\psline[linestyle=dotted, linewidth=0.4pt](-2,1.56)(2,1.56)
\psline[linestyle=dotted, linewidth=0.4pt](-2,1.32)(2,1.32)
\psline[linestyle=dotted, linewidth=0.4pt](-2,1.08)(2,1.08)
\psline[linestyle=dotted, linewidth=0.4pt](-2,0.84)(2,0.84)
\psline[linestyle=dotted, linewidth=0.4pt](-2,0.6)(2,0.6)
\psline[linestyle=dotted, linewidth=0.4pt](-2,0.36)(2,0.36)
\psline[linestyle=dotted, linewidth=0.4pt](-2,0.12)(2,0.12)
\psline[linestyle=dotted, linewidth=0.4pt](-2,-1.8)(2,-1.8)
\psline[linestyle=dotted, linewidth=0.4pt](-2,-1.56)(2,-1.56)
\psline[linestyle=dotted, linewidth=0.4pt](-2,-1.32)(2,-1.32)
\psline[linestyle=dotted, linewidth=0.4pt](-2,-1.08)(2,-1.08)
\psline[linestyle=dotted, linewidth=0.4pt](-2,-0.84)(2,-0.84)
\psline[linestyle=dotted, linewidth=0.4pt](-2,-0.6)(2,-0.6)
\psline[linestyle=dotted, linewidth=0.4pt](-2,-0.36)(2,-0.36)
\psline[linestyle=dotted, linewidth=0.4pt](-2,-0.12)(2,-0.12)

\rput(2.2, 1.8){\tiny $-8$}
\rput(2.2, 1.56){\tiny $-7$}
\rput(2.2, 1.32){\tiny $-6$}
\rput(2.2, 1.08){\tiny $-5$}
\rput(2.2, 0.84){\tiny $-4$}
\rput(2.2, 0.6){\tiny $-3$}
\rput(-0.4,0){\tiny $-2$}
\rput(0.4,0){\tiny $2$}
\rput(2.2, 0.12){\tiny $-1$}
\rput(2.2, -0.12){\tiny $1$}
\rput(2.2, -0.6){\tiny $3$}
\rput(2.2, -0.84){\tiny $4$}
\rput(2.2, -1.08){\tiny $5$}
\rput(2.2, -1.32){\tiny $6$}
\rput(2.2, -1.56){\tiny $7$}
\rput(2.2, -1.8){\tiny $8$}
%\rput(0.82,0.82){\scriptsize $P_1$}
%\rput(0.28,0){\scriptsize $P_2$}
%\rput(-1.2,0){\scriptsize $P_3$}
%\rput(-1.3,-1.85){\scriptsize $P_4$}
%\psline[linecolor=red]{->}(-0.05, 1.53)(-0.6,-1.483)
\end{pspicture}
&

\begin{pspicture}(-2,-1.92)(2.5,1.92)%\showgrid
\pscircle[linecolor=gray, linewidth=0.2pt](0,0){1.8}
%\pscurve[linecolor=red, linewidth=1.5pt](1.2237,1.32)(1.3,0.86)(1.697,0.6)
%\pscurve[linecolor=red, linewidth=1.5pt](1.697,0.6)(1,0.85)(0,1)(-1.44,1.08)
%%\pscurve[linecolor=red, linewidth=1.5pt](1.2237,1.32)(1, 1.2)(0,1.07)(-1.44,1.08)
%\pscurve[linecolor=red, linewidth=1.5pt](1.7636,0.36)(0,0)(-1.796,0.12)
%\pscurve[linecolor=red, linewidth=1.5pt](-1.796,0.12)(-1.4, -0.12)(-1.7636,-0.36)
%\pscurve[linecolor=red, linewidth=1.5pt](-1.7636,-0.36)(0, -0.4)(1.59197,-0.84)
%\pscurve[linecolor=red, linewidth=1.5pt](1.59197,-0.84)(1.4, -0.24)(1.7636,0.36)

\psline[linecolor=red](1.2237,-1.32)(-1.59197,0.84)

\psline[linecolor=red](-1.44,-1.08)(-0.897,-1.56)(0,-1.8)(-1.44,-1.08)

\psline[linecolor=red](-1.2237,1.32)(1.59197,-0.84)

\psline[linecolor=red](1.44,1.08)(0.897,1.56)(0,1.8)(1.44,1.08)

\psline[linecolor=red, linestyle=dashed](-1.44,-1.08)(1.2237,-1.32)
\psline[linecolor=red, linestyle=dashed](1.44,1.08)(-1.2237,1.32)

\psdots(0,1.8)(0.897,1.56)(-1.2237,1.32)(1.44,1.08)(-1.59197,0.84)(1.697,0.6)(0,0)(1.796,0.12)(-1.796,-0.12)(-1.697,-0.6)(1.59197,-0.84)(-1.44,-1.08)(1.2237,-1.32)(-0.897,-1.56)(0,-1.8)

%\psdots[linecolor=blue, fillcolor=blue](1.2237,1.32)(-1.59197,0.84)(1.7636,-0.36)(1.59197,-0.84)
%\psdots(1.2237,1.32)(-1.44,1.08)(1.796,0.12)(-1.44,-1.08)

\psline[linestyle=dotted, linewidth=0.4pt](-2,1.8)(2,1.8)
\psline[linestyle=dotted, linewidth=0.4pt](-2,1.56)(2,1.56)
\psline[linestyle=dotted, linewidth=0.4pt](-2,1.32)(2,1.32)
\psline[linestyle=dotted, linewidth=0.4pt](-2,1.08)(2,1.08)
\psline[linestyle=dotted, linewidth=0.4pt](-2,0.84)(2,0.84)
\psline[linestyle=dotted, linewidth=0.4pt](-2,0.6)(2,0.6)
\psline[linestyle=dotted, linewidth=0.4pt](-2,0.36)(2,0.36)
\psline[linestyle=dotted, linewidth=0.4pt](-2,0.12)(2,0.12)
\psline[linestyle=dotted, linewidth=0.4pt](-2,-1.8)(2,-1.8)
\psline[linestyle=dotted, linewidth=0.4pt](-2,-1.56)(2,-1.56)
\psline[linestyle=dotted, linewidth=0.4pt](-2,-1.32)(2,-1.32)
\psline[linestyle=dotted, linewidth=0.4pt](-2,-1.08)(2,-1.08)
\psline[linestyle=dotted, linewidth=0.4pt](-2,-0.84)(2,-0.84)
\psline[linestyle=dotted, linewidth=0.4pt](-2,-0.6)(2,-0.6)
\psline[linestyle=dotted, linewidth=0.4pt](-2,-0.36)(2,-0.36)
\psline[linestyle=dotted, linewidth=0.4pt](-2,-0.12)(2,-0.12)

\rput(2.2, 1.8){\tiny $-8$}
\rput(2.2, 1.56){\tiny $-7$}
\rput(2.2, 1.32){\tiny $-6$}
\rput(2.2, 1.08){\tiny $-5$}
\rput(2.2, 0.84){\tiny $-4$}
\rput(2.2, 0.6){\tiny $-3$}
\rput(-0.4,0){\tiny $-2$}
\rput(0.4,0){\tiny $2$}
\rput(2.2, 0.12){\tiny $-1$}
\rput(2.2, -0.12){\tiny $1$}
\rput(2.2, -0.6){\tiny $3$}
\rput(2.2, -0.84){\tiny $4$}
\rput(2.2, -1.08){\tiny $5$}
\rput(2.2, -1.32){\tiny $6$}
\rput(2.2, -1.56){\tiny $7$}
\rput(2.2, -1.8){\tiny $8$}
%\rput(0.82,0.82){\scriptsize $P_1$}
%\rput(0.28,0){\scriptsize $P_2$}
%\rput(-1.2,0){\scriptsize $P_3$}
%\rput(-1.3,-1.85){\scriptsize $P_4$}
%\psline[linecolor=red]{->}(-0.05, 1.53)(-0.6,-1.483)
\end{pspicture} &

\begin{pspicture}(-2,-1.92)(2.5,1.92)%\showgrid
\pscircle[linecolor=gray, linewidth=0.2pt](0,0){1.8}
%\pscurve[linecolor=red, linewidth=1.5pt](1.2237,1.32)(1.3,0.86)(1.697,0.6)
%\pscurve[linecolor=red, linewidth=1.5pt](1.697,0.6)(1,0.85)(0,1)(-1.44,1.08)
%%\pscurve[linecolor=red, linewidth=1.5pt](1.2237,1.32)(1, 1.2)(0,1.07)(-1.44,1.08)
%\pscurve[linecolor=red, linewidth=1.5pt](1.7636,0.36)(0,0)(-1.796,0.12)
%\pscurve[linecolor=red, linewidth=1.5pt](-1.796,0.12)(-1.4, -0.12)(-1.7636,-0.36)
%\pscurve[linecolor=red, linewidth=1.5pt](-1.7636,-0.36)(0, -0.4)(1.59197,-0.84)
%\pscurve[linecolor=red, linewidth=1.5pt](1.59197,-0.84)(1.4, -0.24)(1.7636,0.36)

\psline[linecolor=red](-1.2237,-1.32)(-1.59197,0.84)

\psline[linecolor=red](1.44,-1.08)(-0.897,-1.56)(0,-1.8)(1.44,-1.08)

\psline[linecolor=red](1.2237,1.32)(1.59197,-0.84)

\psline[linecolor=red](-1.44,1.08)(0.897,1.56)(0,1.8)(-1.44,1.08)

\psdots(0,1.8)(0.897,1.56)(-1.2237,-1.32)(1.44,-1.08)(-1.59197,0.84)(1.697,0.6)(0,0)(1.796,0.12)(-1.796,-0.12)(-1.697,-0.6)(1.59197,-0.84)(-1.44,1.08)(1.2237,1.32)(-0.897,-1.56)(0,-1.8)

%\psdots[linecolor=blue, fillcolor=blue](1.2237,1.32)(-1.59197,0.84)(1.7636,-0.36)(1.59197,-0.84)
%\psdots(1.2237,1.32)(-1.44,1.08)(1.796,0.12)(-1.44,-1.08)

\psline[linestyle=dotted, linewidth=0.4pt](-2,1.8)(2,1.8)
\psline[linestyle=dotted, linewidth=0.4pt](-2,1.56)(2,1.56)
\psline[linestyle=dotted, linewidth=0.4pt](-2,1.32)(2,1.32)
\psline[linestyle=dotted, linewidth=0.4pt](-2,1.08)(2,1.08)
\psline[linestyle=dotted, linewidth=0.4pt](-2,0.84)(2,0.84)
\psline[linestyle=dotted, linewidth=0.4pt](-2,0.6)(2,0.6)
\psline[linestyle=dotted, linewidth=0.4pt](-2,0.36)(2,0.36)
\psline[linestyle=dotted, linewidth=0.4pt](-2,0.12)(2,0.12)
\psline[linestyle=dotted, linewidth=0.4pt](-2,-1.8)(2,-1.8)
\psline[linestyle=dotted, linewidth=0.4pt](-2,-1.56)(2,-1.56)
\psline[linestyle=dotted, linewidth=0.4pt](-2,-1.32)(2,-1.32)
\psline[linestyle=dotted, linewidth=0.4pt](-2,-1.08)(2,-1.08)
\psline[linestyle=dotted, linewidth=0.4pt](-2,-0.84)(2,-0.84)
\psline[linestyle=dotted, linewidth=0.4pt](-2,-0.6)(2,-0.6)
\psline[linestyle=dotted, linewidth=0.4pt](-2,-0.36)(2,-0.36)
\psline[linestyle=dotted, linewidth=0.4pt](-2,-0.12)(2,-0.12)

\rput(2.2, 1.8){\tiny $-8$}
\rput(2.2, 1.56){\tiny $-7$}
\rput(2.2, 1.32){\tiny $-6$}
\rput(2.2, 1.08){\tiny $-5$}
\rput(2.2, 0.84){\tiny $-4$}
\rput(2.2, 0.6){\tiny $-3$}
\rput(-0.4,0){\tiny $-2$}
\rput(0.4,0){\tiny $2$}
\rput(2.2, 0.12){\tiny $-1$}
\rput(2.2, -0.12){\tiny $1$}
\rput(2.2, -0.6){\tiny $3$}
\rput(2.2, -0.84){\tiny $4$}
\rput(2.2, -1.08){\tiny $5$}
\rput(2.2, -1.32){\tiny $6$}
\rput(2.2, -1.56){\tiny $7$}
\rput(2.2, -1.8){\tiny $8$}
%\rput(0.82,0.82){\scriptsize $P_1$}
%\rput(0.28,0){\scriptsize $P_2$}
%\rput(-1.2,0){\scriptsize $P_3$}
%\rput(-1.3,-1.85){\scriptsize $P_4$}
%\psline[linecolor=red]{->}(-0.05, 1.53)(-0.6,-1.483)
\end{pspicture}

\end{tabular}

\caption{Illustration for the second case in the proof of Theorem~\ref{thm_d}. Here we have $s=s_5=(5,6)(-5,-6)$, $P=\{-4,5,6,7,8\}$ is a polygon of $\Read(y)$ (on the left), which is split into two polygons $P_1=\{5,7,8\}$, $P_2=\{-4,6\}$ of $\Read(y)s$ (viewed as $scs$-noncrossing partition, on the right). The total orders $\prec$ on polygons is the same except that $P$ is replaced by $P_1\prec P_2$ (and $-P$ consistently).}  
\label{fig:d_n_1}

\end{figure}

\begin{figure}  

\begin{tabular}{cc}

\psscalebox{1.01}{\begin{pspicture}(-2,-1.92)(2.5,1.8)%\showgrid
\pscircle[linecolor=gray, linewidth=0.2pt](0,0){1.8}
%\pscurve[linecolor=red, linewidth=1.5pt](1.2237,1.32)(1.3,0.86)(1.697,0.6)
%\pscurve[linecolor=red, linewidth=1.5pt](1.697,0.6)(1,0.85)(0,1)(-1.44,1.08)
%%\pscurve[linecolor=red, linewidth=1.5pt](1.2237,1.32)(1, 1.2)(0,1.07)(-1.44,1.08)
%\pscurve[linecolor=red, linewidth=1.5pt](1.7636,0.36)(0,0)(-1.796,0.12)
%\pscurve[linecolor=red, linewidth=1.5pt](-1.796,0.12)(-1.4, -0.12)(-1.7636,-0.36)
%\pscurve[linecolor=red, linewidth=1.5pt](-1.7636,-0.36)(0, -0.4)(1.59197,-0.84)
%\pscurve[linecolor=red, linewidth=1.5pt](1.59197,-0.84)(1.4, -0.24)(1.7636,0.36)

\psline[linecolor=blue](-0.897,-1.56)(0,0.32)
\pscurve[linecolor=blue](-1.44,-1.04)(-0.8,-0.35)(0,0.36)
\pscurve[linecolor=blue](-0.92,-1.53)(-1.15,-1.1)(-1.44,-1.08)

\psline[linecolor=red](0.897,1.56)(0,-0.32)
\pscurve[linecolor=red](1.44,1.04)(0.8,0.35)(0,-0.36)
\pscurve[linecolor=red](0.92,1.53)(1.15,1.1)(1.44,1.08)

\pscurve[linecolor=red](0,1.8)(-1.1,0.85)(-1.697,-0.56)
\pscurve[linecolor=red](0.02,1.76)(-0.65,0.85)(-1.697,-0.6)

\pscurve[linecolor=red](0,-1.8)(1.1,-0.85)(1.697,0.56)
\pscurve[linecolor=red](-0.02,-1.76)(0.65,-0.85)(1.697,0.6)

\psdots(0,1.8)(0.897,1.56)(-1.2237,1.32)(1.44,1.08)(-1.59197,0.84)(1.697,0.6)(1.796,0.12)(-1.796,-0.12)(0,-0.36)(-1.697,-0.6)(1.59197,-0.84)(1.2237,-1.32)(0,-1.8)

\psdots[fillcolor=blue, linecolor=blue](1.2237,-1.32)(-0.897,-1.56)(0,0.36)(-1.44,-1.08)

\rput(1.5,-1.4){\tiny ${Q}$}
\rput(-0.9, -1){\tiny ${P}$}

%\psdots[linecolor=blue, fillcolor=blue](1.2237,1.32)(-1.59197,0.84)(1.7636,-0.36)(1.59197,-0.84)
%\psdots(1.2237,1.32)(-1.44,1.08)(1.796,0.12)(-1.44,-1.08)

\psline[linestyle=dotted, linewidth=0.4pt](-2,1.8)(2,1.8)
\psline[linestyle=dotted, linewidth=0.4pt](-2,1.56)(2,1.56)
\psline[linestyle=dotted, linewidth=0.4pt](-2,1.32)(2,1.32)
\psline[linestyle=dotted, linewidth=0.4pt](-2,1.08)(2,1.08)
\psline[linestyle=dotted, linewidth=0.4pt](-2,0.84)(2,0.84)
\psline[linestyle=dotted, linewidth=0.4pt](-2,0.6)(2,0.6)
\psline[linestyle=dotted, linewidth=0.4pt](-2,0.36)(2,0.36)
\psline[linestyle=dotted, linewidth=0.4pt](-2,0.12)(2,0.12)
\psline[linestyle=dotted, linewidth=0.4pt](-2,-1.8)(2,-1.8)
\psline[linestyle=dotted, linewidth=0.4pt](-2,-1.56)(2,-1.56)
\psline[linestyle=dotted, linewidth=0.4pt](-2,-1.32)(2,-1.32)
\psline[linestyle=dotted, linewidth=0.4pt](-2,-1.08)(2,-1.08)
\psline[linestyle=dotted, linewidth=0.4pt](-2,-0.84)(2,-0.84)
\psline[linestyle=dotted, linewidth=0.4pt](-2,-0.6)(2,-0.6)
\psline[linestyle=dotted, linewidth=0.4pt](-2,-0.36)(2,-0.36)
\psline[linestyle=dotted, linewidth=0.4pt](-2,-0.12)(2,-0.12)

\rput(2.2, 1.8){\tiny $-8$}
\rput(2.2, 1.56){\tiny $-7$}
\rput(2.2, 1.32){\tiny $-6$}
\rput(2.2, 1.08){\tiny $-5$}
\rput(2.2, 0.84){\tiny $-4$}
\rput(2.2, 0.6){\tiny $-3$}
\rput(2.2, 0.36){\tiny $-2$}
\rput(2.2, 0.12){\tiny $-1$}
\rput(2.2, -0.12){\tiny $1$}
\rput(2.2, -0.36){\tiny $2$}
\rput(2.2, -0.6){\tiny $3$}
\rput(2.2, -0.84){\tiny $4$}
\rput(2.2, -1.08){\tiny $5$}
\rput(2.2, -1.32){\tiny $6$}
\rput(2.2, -1.56){\tiny $7$}
\rput(2.2, -1.8){\tiny $8$}
%\rput(0.82,0.82){\scriptsize $P_1$}
%\rput(0.28,0){\scriptsize $P_2$}
%\rput(-1.2,0){\scriptsize $P_3$}
%\rput(-1.3,-1.85){\scriptsize $P_4$}
%\psline[linecolor=red]{->}(-0.05, 1.53)(-0.6,-1.483)
\end{pspicture}}

&

\psscalebox{1.01}{\begin{pspicture}(-2,-1.92)(2.5,1.8)%\showgrid
\pscircle[linecolor=gray, linewidth=0.2pt](0,0){1.8}
%\pscurve[linecolor=red, linewidth=1.5pt](1.2237,1.32)(1.3,0.86)(1.697,0.6)
%\pscurve[linecolor=red, linewidth=1.5pt](1.697,0.6)(1,0.85)(0,1)(-1.44,1.08)
%%\pscurve[linecolor=red, linewidth=1.5pt](1.2237,1.32)(1, 1.2)(0,1.07)(-1.44,1.08)
%\pscurve[linecolor=red, linewidth=1.5pt](1.7636,0.36)(0,0)(-1.796,0.12)
%\pscurve[linecolor=red, linewidth=1.5pt](-1.796,0.12)(-1.4, -0.12)(-1.7636,-0.36)
%\pscurve[linecolor=red, linewidth=1.5pt](-1.7636,-0.36)(0, -0.4)(1.59197,-0.84)
%\pscurve[linecolor=red, linewidth=1.5pt](1.59197,-0.84)(1.4, -0.24)(1.7636,0.36)

\psline[linecolor=blue](-0.897,-1.56)(0,0.32)
\pscurve[linecolor=blue](-1.2237,-1.32)(-0.8,-0.35)(0,0.36)
\pscurve[linecolor=blue](-0.92,-1.53)(-1.15,-1.3)(-1.2237,-1.32)

\psline[linecolor=red](0.897,1.56)(0,-0.32)
\pscurve[linecolor=red](1.2237,1.32)(0.8,0.35)(0,-0.36)
\pscurve[linecolor=red](0.92,1.53)(1.15,1.3)(1.2237,1.32)

\pscurve[linecolor=red](0,1.8)(-1.1,0.85)(-1.697,-0.56)
\pscurve[linecolor=red](0.02,1.76)(-0.65,0.85)(-1.697,-0.6)

\pscurve[linecolor=red](0,-1.8)(1.1,-0.85)(1.697,0.56)
\pscurve[linecolor=red](-0.02,-1.76)(0.65,-0.85)(1.697,0.6)

\psdots(0,1.8)(-1.44,1.08)(-1.59197,0.84)(1.697,0.6)(1.796,0.12)(-1.796,-0.12)(-1.697,-0.6)(1.59197,-0.84)(0,-1.8)(0.897,1.56)(0,-0.36)(1.2237,1.32)

\psdots[fillcolor=blue, linecolor=blue](1.44,-1.08)(-0.897,-1.56)(0,0.36)(-1.2237,-1.32)

\rput(1.7,-1.08){\tiny $\overline{Q}$}
\rput(-0.9, -1){\tiny $\overline{P}$}

%\psdots[linecolor=blue, fillcolor=blue](1.2237,1.32)(-1.59197,0.84)(1.7636,-0.36)(1.59197,-0.84)
%\psdots(1.2237,1.32)(-1.44,1.08)(1.796,0.12)(-1.44,-1.08)

\psline[linestyle=dotted, linewidth=0.4pt](-2,1.8)(2,1.8)
\psline[linestyle=dotted, linewidth=0.4pt](-2,1.56)(2,1.56)
\psline[linestyle=dotted, linewidth=0.4pt](-2,1.32)(2,1.32)
\psline[linestyle=dotted, linewidth=0.4pt](-2,1.08)(2,1.08)
\psline[linestyle=dotted, linewidth=0.4pt](-2,0.84)(2,0.84)
\psline[linestyle=dotted, linewidth=0.4pt](-2,0.6)(2,0.6)
\psline[linestyle=dotted, linewidth=0.4pt](-2,0.36)(2,0.36)
\psline[linestyle=dotted, linewidth=0.4pt](-2,0.12)(2,0.12)
\psline[linestyle=dotted, linewidth=0.4pt](-2,-1.8)(2,-1.8)
\psline[linestyle=dotted, linewidth=0.4pt](-2,-1.56)(2,-1.56)
\psline[linestyle=dotted, linewidth=0.4pt](-2,-1.32)(2,-1.32)
\psline[linestyle=dotted, linewidth=0.4pt](-2,-1.08)(2,-1.08)
\psline[linestyle=dotted, linewidth=0.4pt](-2,-0.84)(2,-0.84)
\psline[linestyle=dotted, linewidth=0.4pt](-2,-0.6)(2,-0.6)
\psline[linestyle=dotted, linewidth=0.4pt](-2,-0.36)(2,-0.36)
\psline[linestyle=dotted, linewidth=0.4pt](-2,-0.12)(2,-0.12)

\rput(2.2, 1.8){\tiny $-8$}
\rput(2.2, 1.56){\tiny $-7$}
\rput(2.2, 1.32){\tiny $-6$}
\rput(2.2, 1.08){\tiny $-5$}
\rput(2.2, 0.84){\tiny $-4$}
\rput(2.2, 0.6){\tiny $-3$}
\rput(2.2, 0.36){\tiny $-2$}
\rput(2.2, 0.12){\tiny $-1$}
\rput(2.2, -0.12){\tiny $1$}
\rput(2.2, -0.36){\tiny $2$}
\rput(2.2, -0.6){\tiny $3$}
\rput(2.2, -0.84){\tiny $4$}
\rput(2.2, -1.08){\tiny $5$}
\rput(2.2, -1.32){\tiny $6$}
\rput(2.2, -1.56){\tiny $7$}
\rput(2.2, -1.8){\tiny $8$}
%\rput(0.82,0.82){\scriptsize $P_1$}
%\rput(0.28,0){\scriptsize $P_2$}
%\rput(-1.2,0){\scriptsize $P_3$}
%\rput(-1.3,-1.85){\scriptsize $P_4$}
%\psline[linecolor=red]{->}(-0.05, 1.53)(-0.6,-1.483)
\end{pspicture}}

\end{tabular}

\caption{Illustration for the third case in the proof of Theorem~\ref{thm_d}. Here $s=s_5=(5,6)(-5,-6)$. The order on the polygons of $\Read(y)$ (on the left) is given in Example~\ref{ex_dn}. In the notation of the proof we have $P=\{-2,5,7\}$, $Q=\{6\}$, $\overline{P}=\{-2,6,7\}$, $\overline{Q}=\{5\}$. The order $\prec$ on the polygons of $s\Read(y)s$ (represented on the right) is the same as the order on the polygons of $\Read(y)$ except that $P$ is replaced by $\overline{P}$ and $Q$ by $\overline{Q}$ (and $-P$, $-Q$ consistently). } 
\label{fig:d_n_autre}

\end{figure}
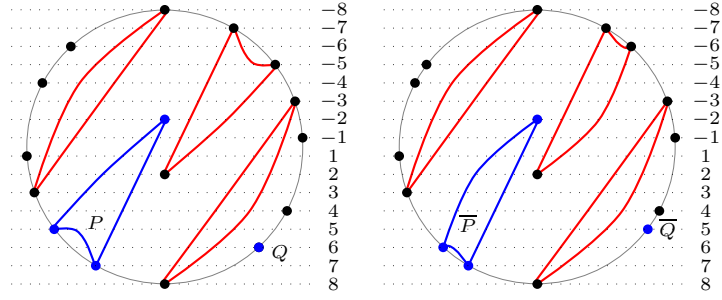	

\begin{figure}  

\begin{tabular}{cc}

\psscalebox{1.01}{\begin{pspicture}(-2,-1.92)(2.5,1.8)%\showgrid
\pscircle[linecolor=gray, linewidth=0.2pt](0,0){1.8}
%\pscurve[linecolor=red, linewidth=1.5pt](1.2237,1.32)(1.3,0.86)(1.697,0.6)
%\pscurve[linecolor=red, linewidth=1.5pt](1.697,0.6)(1,0.85)(0,1)(-1.44,1.08)
%%\pscurve[linecolor=red, linewidth=1.5pt](1.2237,1.32)(1, 1.2)(0,1.07)(-1.44,1.08)
%\pscurve[linecolor=red, linewidth=1.5pt](1.7636,0.36)(0,0)(-1.796,0.12)
%\pscurve[linecolor=red, linewidth=1.5pt](-1.796,0.12)(-1.4, -0.12)(-1.7636,-0.36)
%\pscurve[linecolor=red, linewidth=1.5pt](-1.7636,-0.36)(0, -0.4)(1.59197,-0.84)
%\pscurve[linecolor=red, linewidth=1.5pt](1.59197,-0.84)(1.4, -0.24)(1.7636,0.36)

%\psline[linecolor=red](-0.897,-1.56)(0,0.32)
%\pscurve[linecolor=red](-1.44,-1.04)(-0.8,-0.35)(0,0.36)
%\pscurve[linecolor=red](-0.92,-1.53)(-1.15,-1.1)(-1.44,-1.08)

%\psline[linecolor=red](0.897,1.56)(0,-0.32)
%\pscurve[linecolor=red](1.44,1.04)(0.8,0.35)(0,-0.36)
%\pscurve[linecolor=red](0.92,1.53)(1.15,1.1)(1.44,1.08)

%\pscurve[linecolor=red](0,1.8)(-1.1,0.85)(-1.697,-0.56)
%\pscurve[linecolor=red](0.02,1.76)(-0.65,0.85)(-1.697,-0.6)

%\pscurve[linecolor=red](0,-1.8)(1.1,-0.85)(1.697,0.56)
%\pscurve[linecolor=red](-0.02,-1.76)(0.65,-0.85)(1.697,0.6)

\psline[linecolor=red](0,1.8)(0.897,1.56)
\psline[linecolor=red](-1.697,-0.6)(0.897,1.56)
\psline[linecolor=red](0,1.8)(-1.697,-0.6)

\psline[linecolor=red](0,-1.8)(-0.897,-1.56)
\psline[linecolor=red](1.697,0.6)(-0.897,-1.56)
\psline[linecolor=red](0,-1.8)(1.697,0.6)

\pscurve[linecolor=blue](1.79,-0.32)(1.45,-0.58)(1.2237,-1.32)

\pscurve[linecolor=red](-1.79,0.32)(-1.45,0.58)(-1.2237,1.32)

\psdots[fillcolor=blue, linecolor=blue](1.2237,-1.32)(0,0.16)(1.79,-0.32)

\psdots(0,1.8)(0.897,1.56)(-1.2237,1.32)(1.44,1.08)(-1.59197,0.84)(1.697,0.6)(-1.79,0.32)(0,-0.16)(-1.697,-0.6)(1.59197,-0.84)(-1.44,-1.08)(-0.897,-1.56)(0,-1.8)

\rput(0.3,0.2){\tiny $P$}
\rput(1.35,-0.4){\tiny $Q$}

%\psdots[linecolor=blue, fillcolor=blue](1.2237,1.32)(-1.59197,0.84)(1.7636,-0.36)(1.59197,-0.84)
%\psdots(1.2237,1.32)(-1.44,1.08)(1.796,0.12)(-1.44,-1.08)

\psline[linestyle=dotted, linewidth=0.4pt](-2,1.8)(2,1.8)
\psline[linestyle=dotted, linewidth=0.4pt](-2,1.56)(2,1.56)
\psline[linestyle=dotted, linewidth=0.4pt](-2,1.32)(2,1.32)
\psline[linestyle=dotted, linewidth=0.4pt](-2,1.08)(2,1.08)
\psline[linestyle=dotted, linewidth=0.4pt](-2,0.84)(2,0.84)
\psline[linestyle=dotted, linewidth=0.4pt](-2,0.6)(2,0.6)
\psline[linestyle=dotted, linewidth=0.4pt](-2,0.36)(2,0.36)
\psline[linestyle=dotted, linewidth=0.4pt](-2,0.12)(2,0.12)
\psline[linestyle=dotted, linewidth=0.4pt](-2,-1.8)(2,-1.8)
\psline[linestyle=dotted, linewidth=0.4pt](-2,-1.56)(2,-1.56)
\psline[linestyle=dotted, linewidth=0.4pt](-2,-1.32)(2,-1.32)
\psline[linestyle=dotted, linewidth=0.4pt](-2,-1.08)(2,-1.08)
\psline[linestyle=dotted, linewidth=0.4pt](-2,-0.84)(2,-0.84)
\psline[linestyle=dotted, linewidth=0.4pt](-2,-0.6)(2,-0.6)
\psline[linestyle=dotted, linewidth=0.4pt](-2,-0.36)(2,-0.36)
\psline[linestyle=dotted, linewidth=0.4pt](-2,-0.12)(2,-0.12)

\rput(2.2, 1.8){\tiny $-8$}
\rput(2.2, 1.56){\tiny $-7$}
\rput(2.2, 1.32){\tiny $-6$}
\rput(2.2, 1.08){\tiny $-5$}
\rput(2.2, 0.84){\tiny $-4$}
\rput(2.2, 0.6){\tiny $-3$}
\rput(2.2, 0.36){\tiny $-2$}
\rput(2.2, 0.12){\tiny $-1$}
\rput(2.2, -0.12){\tiny $1$}
\rput(2.2, -0.36){\tiny $2$}
\rput(2.2, -0.6){\tiny $3$}
\rput(2.2, -0.84){\tiny $4$}
\rput(2.2, -1.08){\tiny $5$}
\rput(2.2, -1.32){\tiny $6$}
\rput(2.2, -1.56){\tiny $7$}
\rput(2.2, -1.8){\tiny $8$}
%\rput(0.82,0.82){\scriptsize $P_1$}
%\rput(0.28,0){\scriptsize $P_2$}
%\rput(-1.2,0){\scriptsize $P_3$}
%\rput(-1.3,-1.85){\scriptsize $P_4$}
%\psline[linecolor=red]{->}(-0.05, 1.53)(-0.6,-1.483)
\end{pspicture}}

&

\psscalebox{1.01}{\begin{pspicture}(-2,-1.92)(2.5,1.8)%\showgrid
\pscircle[linecolor=gray, linewidth=0.2pt](0,0){1.8}
%\pscurve[linecolor=red, linewidth=1.5pt](1.2237,1.32)(1.3,0.86)(1.697,0.6)
%\pscurve[linecolor=red, linewidth=1.5pt](1.697,0.6)(1,0.85)(0,1)(-1.44,1.08)
%%\pscurve[linecolor=red, linewidth=1.5pt](1.2237,1.32)(1, 1.2)(0,1.07)(-1.44,1.08)
%\pscurve[linecolor=red, linewidth=1.5pt](1.7636,0.36)(0,0)(-1.796,0.12)
%\pscurve[linecolor=red, linewidth=1.5pt](-1.796,0.12)(-1.4, -0.12)(-1.7636,-0.36)
%\pscurve[linecolor=red, linewidth=1.5pt](-1.7636,-0.36)(0, -0.4)(1.59197,-0.84)
%\pscurve[linecolor=red, linewidth=1.5pt](1.59197,-0.84)(1.4, -0.24)(1.7636,0.36)

%\psline[linecolor=red](-0.897,-1.56)(0,0.32)
%\pscurve[linecolor=red](-1.44,-1.04)(-0.8,-0.35)(0,0.36)
%\pscurve[linecolor=red](-0.92,-1.53)(-1.15,-1.1)(-1.44,-1.08)

%\psline[linecolor=red](0.897,1.56)(0,-0.32)
%\pscurve[linecolor=red](1.44,1.04)(0.8,0.35)(0,-0.36)
%\pscurve[linecolor=red](0.92,1.53)(1.15,1.1)(1.44,1.08)

%\pscurve[linecolor=red](0,1.8)(-1.1,0.85)(-1.697,-0.56)
%\pscurve[linecolor=red](0.02,1.76)(-0.65,0.85)(-1.697,-0.6)

%\pscurve[linecolor=red](0,-1.8)(1.1,-0.85)(1.697,0.56)
%\pscurve[linecolor=red](-0.02,-1.76)(0.65,-0.85)(1.697,0.6)

\psline[linecolor=red](0,1.8)(0.897,1.56)
\psline[linecolor=red](-1.697,-0.6)(0.897,1.56)
\psline[linecolor=red](0,1.8)(-1.697,-0.6)

\psline[linecolor=red](0,-1.8)(-0.897,-1.56)
\psline[linecolor=red](1.697,0.6)(-0.897,-1.56)
\psline[linecolor=red](0,-1.8)(1.697,0.6)

\pscurve[linecolor=blue](1.796,0.12)(1.45,-0.58)(1.2237,-1.32)

\pscurve[linecolor=red](-1.796,-0.12)(-1.45,0.58)(-1.2237,1.32)

\psdots[linecolor=blue, fillcolor=blue](0,-0.36)(1.796,0.12)(1.2237,-1.32)

\rput(-0.35,-0.36){\tiny $\overline{P}$}
\rput(1.32,-0.4){\tiny $\overline{Q}$}

\psdots(0,1.8)(0.897,1.56)(-1.2237,1.32)(1.44,1.08)(-1.59197,0.84)(1.697,0.6)(-1.796,-0.12)(0,0.36)(-1.697,-0.6)(1.59197,-0.84)(-1.44,-1.08)(-0.897,-1.56)(0,-1.8)

%\psdots[linecolor=blue, fillcolor=blue](1.2237,1.32)(-1.59197,0.84)(1.7636,-0.36)(1.59197,-0.84)
%\psdots(1.2237,1.32)(-1.44,1.08)(1.796,0.12)(-1.44,-1.08)

\psline[linestyle=dotted, linewidth=0.4pt](-2,1.8)(2,1.8)
\psline[linestyle=dotted, linewidth=0.4pt](-2,1.56)(2,1.56)
\psline[linestyle=dotted, linewidth=0.4pt](-2,1.32)(2,1.32)
\psline[linestyle=dotted, linewidth=0.4pt](-2,1.08)(2,1.08)
\psline[linestyle=dotted, linewidth=0.4pt](-2,0.84)(2,0.84)
\psline[linestyle=dotted, linewidth=0.4pt](-2,0.6)(2,0.6)
\psline[linestyle=dotted, linewidth=0.4pt](-2,0.36)(2,0.36)
\psline[linestyle=dotted, linewidth=0.4pt](-2,0.12)(2,0.12)
\psline[linestyle=dotted, linewidth=0.4pt](-2,-1.8)(2,-1.8)
\psline[linestyle=dotted, linewidth=0.4pt](-2,-1.56)(2,-1.56)
\psline[linestyle=dotted, linewidth=0.4pt](-2,-1.32)(2,-1.32)
\psline[linestyle=dotted, linewidth=0.4pt](-2,-1.08)(2,-1.08)
\psline[linestyle=dotted, linewidth=0.4pt](-2,-0.84)(2,-0.84)
\psline[linestyle=dotted, linewidth=0.4pt](-2,-0.6)(2,-0.6)
\psline[linestyle=dotted, linewidth=0.4pt](-2,-0.36)(2,-0.36)
\psline[linestyle=dotted, linewidth=0.4pt](-2,-0.12)(2,-0.12)

\rput(2.2, 1.8){\tiny $-8$}
\rput(2.2, 1.56){\tiny $-7$}
\rput(2.2, 1.32){\tiny $-6$}
\rput(2.2, 1.08){\tiny $-5$}
\rput(2.2, 0.84){\tiny $-4$}
\rput(2.2, 0.6){\tiny $-3$}
\rput(2.2, 0.36){\tiny $-2$}
\rput(2.2, 0.12){\tiny $-1$}
\rput(2.2, -0.12){\tiny $1$}
\rput(2.2, -0.36){\tiny $2$}
\rput(2.2, -0.6){\tiny $3$}
\rput(2.2, -0.84){\tiny $4$}
\rput(2.2, -1.08){\tiny $5$}
\rput(2.2, -1.32){\tiny $6$}
\rput(2.2, -1.56){\tiny $7$}
\rput(2.2, -1.8){\tiny $8$}
%\rput(0.82,0.82){\scriptsize $P_1$}
%\rput(0.28,0){\scriptsize $P_2$}
%\rput(-1.2,0){\scriptsize $P_3$}
%\rput(-1.3,-1.85){\scriptsize $P_4$}
%\psline[linecolor=red]{->}(-0.05, 1.53)(-0.6,-1.483)
\end{pspicture}}

\end{tabular}

\caption{Illustration for the third case in the proof of Theorem~\ref{thm_d} in the case where $s=s_0$. The first half of the order on the polygons of $\Read(y)$ (on the left) is given by $\{-5\}\prec\{4\}\prec Q \prec \{-3,7,8\}\prec P$, while the first half of $\prec$ for $s\Read(y)s$ (on the right) is given by $\{-5\}\prec\{4\}\prec \overline{Q} \prec \{-3,7,8\}\prec -\overline{P}$. The order is hence not the same after overlining since in one case in the middle we have $P\prec -P$ while in the second case we get $-\overline{P}\prec\overline{P}$, but to obtain $S_{c}(\Read(y))$ from the order, the two centermost entries in the line notation of $S_{c}(\Read(y))'$, which precisely correspond to the two indices in $-P$ and $P=\{-1\}$, must be exchanged.}
\label{fig:d_n_autre_2}

\end{figure}
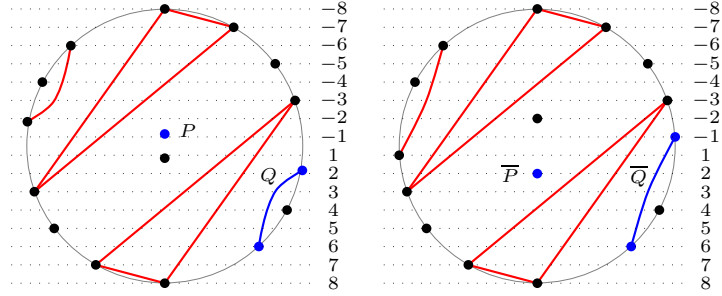

\begin{figure}  

\begin{tabular}{cc}

\psscalebox{1.01}{\begin{pspicture}(-2,-1.92)(2.5,1.8)%\showgrid
\pscircle[linecolor=gray, linewidth=0.2pt](0,0){1.8}
%\pscurve[linecolor=red, linewidth=1.5pt](1.2237,1.32)(1.3,0.86)(1.697,0.6)
%\pscurve[linecolor=red, linewidth=1.5pt](1.697,0.6)(1,0.85)(0,1)(-1.44,1.08)
%%\pscurve[linecolor=red, linewidth=1.5pt](1.2237,1.32)(1, 1.2)(0,1.07)(-1.44,1.08)
%\pscurve[linecolor=red, linewidth=1.5pt](1.7636,0.36)(0,0)(-1.796,0.12)
%\pscurve[linecolor=red, linewidth=1.5pt](-1.796,0.12)(-1.4, -0.12)(-1.7636,-0.36)
%\pscurve[linecolor=red, linewidth=1.5pt](-1.7636,-0.36)(0, -0.4)(1.59197,-0.84)
%\pscurve[linecolor=red, linewidth=1.5pt](1.59197,-0.84)(1.4, -0.24)(1.7636,0.36)

%\psline[linecolor=red](-0.897,-1.56)(0,0.32)
%\pscurve[linecolor=red](-1.44,-1.04)(-0.8,-0.35)(0,0.36)
%\pscurve[linecolor=red](-0.92,-1.53)(-1.15,-1.1)(-1.44,-1.08)

%\psline[linecolor=red](0.897,1.56)(0,-0.32)
%\pscurve[linecolor=red](1.44,1.04)(0.8,0.35)(0,-0.36)
%\pscurve[linecolor=red](0.92,1.53)(1.15,1.1)(1.44,1.08)

%\pscurve[linecolor=red](0,1.8)(-1.1,0.85)(-1.697,-0.56)
%\pscurve[linecolor=red](0.02,1.76)(-0.65,0.85)(-1.697,-0.6)

%\pscurve[linecolor=red](0,-1.8)(1.1,-0.85)(1.697,0.56)
%\pscurve[linecolor=red](-0.02,-1.76)(0.65,-0.85)(1.697,0.6)

\psline[linecolor=blue](0,1.8)(0.897,1.56)
\psline[linecolor=blue](0.897,1.56)(1.79,-0.32)
\psline[linecolor=blue](1.79,-0.32)(0,1.8)
%\psline[linecolor=red](-1.697,-0.6)(0.897,1.56)
%\psline[linecolor=red](0,1.8)(-1.697,-0.6)

\psline[linecolor=red](0,-1.8)(-0.897,-1.56)
\psline[linecolor=red](-0.897,-1.56)(-1.79,0.32)
\psline[linecolor=red](-1.79,0.32)(0,-1.8)
%\psline[linecolor=red](1.697,0.6)(-0.897,-1.56)
%\psline[linecolor=red](0,-1.8)(1.697,0.6)

\pscurve[linecolor=blue](1.59197,-0.84)(1.45,-0.99)(1.2237,-1.32)
\psline[linecolor=blue](1.59197,-0.84)(0,0.16)
\psline[linecolor=blue](1.2237,-1.32)(0,0.16)

\pscurve[linecolor=red](-1.59197,0.84)(-1.45,0.99)(-1.2237,1.32)
\psline[linecolor=red](-1.59197,0.84)(0,-0.16)
\psline[linecolor=red](-1.2237,1.32)(0,-0.16)

%\pscurve[linecolor=red](-1.79,0.32)(-1.45,0.58)(-1.2237,1.32)

\psdots[fillcolor=blue, linecolor=blue](1.2237,-1.32)(1.79,-0.32)(1.59197,-0.84)(0,0.16)(0,1.8)(0.897,1.56)

\psdots(-1.2237,1.32)(1.44,1.08)(-1.59197,0.84)(1.697,0.6)(-1.79,0.32)(0,-0.16)(-1.697,-0.6)(-1.44,-1.08)(-0.897,-1.56)(0,-1.8)

\rput(0.3,0.2){\tiny $P$}
\rput(0.9,1){\tiny $Q$}

%\psdots[linecolor=blue, fillcolor=blue](1.2237,1.32)(-1.59197,0.84)(1.7636,-0.36)(1.59197,-0.84)
%\psdots(1.2237,1.32)(-1.44,1.08)(1.796,0.12)(-1.44,-1.08)

\psline[linestyle=dotted, linewidth=0.4pt](-2,1.8)(2,1.8)
\psline[linestyle=dotted, linewidth=0.4pt](-2,1.56)(2,1.56)
\psline[linestyle=dotted, linewidth=0.4pt](-2,1.32)(2,1.32)
\psline[linestyle=dotted, linewidth=0.4pt](-2,1.08)(2,1.08)
\psline[linestyle=dotted, linewidth=0.4pt](-2,0.84)(2,0.84)
\psline[linestyle=dotted, linewidth=0.4pt](-2,0.6)(2,0.6)
\psline[linestyle=dotted, linewidth=0.4pt](-2,0.36)(2,0.36)
\psline[linestyle=dotted, linewidth=0.4pt](-2,0.12)(2,0.12)
\psline[linestyle=dotted, linewidth=0.4pt](-2,-1.8)(2,-1.8)
\psline[linestyle=dotted, linewidth=0.4pt](-2,-1.56)(2,-1.56)
\psline[linestyle=dotted, linewidth=0.4pt](-2,-1.32)(2,-1.32)
\psline[linestyle=dotted, linewidth=0.4pt](-2,-1.08)(2,-1.08)
\psline[linestyle=dotted, linewidth=0.4pt](-2,-0.84)(2,-0.84)
\psline[linestyle=dotted, linewidth=0.4pt](-2,-0.6)(2,-0.6)
\psline[linestyle=dotted, linewidth=0.4pt](-2,-0.36)(2,-0.36)
\psline[linestyle=dotted, linewidth=0.4pt](-2,-0.12)(2,-0.12)

\rput(2.2, 1.8){\tiny $-8$}
\rput(2.2, 1.56){\tiny $-7$}
\rput(2.2, 1.32){\tiny $-6$}
\rput(2.2, 1.08){\tiny $-5$}
\rput(2.2, 0.84){\tiny $-4$}
\rput(2.2, 0.6){\tiny $-3$}
\rput(2.2, 0.36){\tiny $-2$}
\rput(2.2, 0.12){\tiny $-1$}
\rput(2.2, -0.12){\tiny $1$}
\rput(2.2, -0.36){\tiny $2$}
\rput(2.2, -0.6){\tiny $3$}
\rput(2.2, -0.84){\tiny $4$}
\rput(2.2, -1.08){\tiny $5$}
\rput(2.2, -1.32){\tiny $6$}
\rput(2.2, -1.56){\tiny $7$}
\rput(2.2, -1.8){\tiny $8$}
%\rput(0.82,0.82){\scriptsize $P_1$}
%\rput(0.28,0){\scriptsize $P_2$}
%\rput(-1.2,0){\scriptsize $P_3$}
%\rput(-1.3,-1.85){\scriptsize $P_4$}
%\psline[linecolor=red]{->}(-0.05, 1.53)(-0.6,-1.483)
\end{pspicture}}

&

\psscalebox{1.01}{\begin{pspicture}(-2,-1.92)(2.5,1.8)%\showgrid
\pscircle[linecolor=gray, linewidth=0.2pt](0,0){1.8}
%\pscurve[linecolor=red, linewidth=1.5pt](1.2237,1.32)(1.3,0.86)(1.697,0.6)
%\pscurve[linecolor=red, linewidth=1.5pt](1.697,0.6)(1,0.85)(0,1)(-1.44,1.08)
%%\pscurve[linecolor=red, linewidth=1.5pt](1.2237,1.32)(1, 1.2)(0,1.07)(-1.44,1.08)
%\pscurve[linecolor=red, linewidth=1.5pt](1.7636,0.36)(0,0)(-1.796,0.12)
%\pscurve[linecolor=red, linewidth=1.5pt](-1.796,0.12)(-1.4, -0.12)(-1.7636,-0.36)
%\pscurve[linecolor=red, linewidth=1.5pt](-1.7636,-0.36)(0, -0.4)(1.59197,-0.84)
%\pscurve[linecolor=red, linewidth=1.5pt](1.59197,-0.84)(1.4, -0.24)(1.7636,0.36)

%\psline[linecolor=red](-0.897,-1.56)(0,0.32)
%\pscurve[linecolor=red](-1.44,-1.04)(-0.8,-0.35)(0,0.36)
%\pscurve[linecolor=red](-0.92,-1.53)(-1.15,-1.1)(-1.44,-1.08)

%\psline[linecolor=red](0.897,1.56)(0,-0.32)
%\pscurve[linecolor=red](1.44,1.04)(0.8,0.35)(0,-0.36)
%\pscurve[linecolor=red](0.92,1.53)(1.15,1.1)(1.44,1.08)

%\pscurve[linecolor=red](0,1.8)(-1.1,0.85)(-1.697,-0.56)
%\pscurve[linecolor=red](0.02,1.76)(-0.65,0.85)(-1.697,-0.6)

%\pscurve[linecolor=red](0,-1.8)(1.1,-0.85)(1.697,0.56)
%\pscurve[linecolor=red](-0.02,-1.76)(0.65,-0.85)(1.697,0.6)

\psline[linecolor=blue](0,1.8)(0.897,1.56)
\psline[linecolor=blue](0.897,1.56)(1.796,0.12)
\psline[linecolor=blue](1.796,0.12)(0,1.8)
%\psline[linecolor=red](-1.697,-0.6)(0.897,1.56)
%\psline[linecolor=red](0,1.8)(-1.697,-0.6)

\psline[linecolor=red](0,-1.8)(-0.897,-1.56)
\psline[linecolor=red](-0.897,-1.56)(-1.796,-0.12)
\psline[linecolor=red](-1.796,-0.12)(0,-1.8)
%\psline[linecolor=red](1.697,0.6)(-0.897,-1.56)
%\psline[linecolor=red](0,-1.8)(1.697,0.6)

\pscurve[linecolor=blue](1.59197,-0.84)(1.45,-0.99)(1.2237,-1.32)
\psline[linecolor=blue](1.59197,-0.84)(0,-0.36)
\psline[linecolor=blue](1.2237,-1.32)(0,-0.36)

\pscurve[linecolor=red](-1.59197,0.84)(-1.45,0.99)(-1.2237,1.32)
\psline[linecolor=red](-1.59197,0.84)(0,0.36)
\psline[linecolor=red](-1.2237,1.32)(0,0.36)

\psdots[fillcolor=blue, linecolor=blue](1.2237,-1.32)(1.796,0.12)(1.59197,-0.84)(0,-0.36)(0,1.8)(0.897,1.56)

\psdots(-1.2237,1.32)(1.44,1.08)(-1.59197,0.84)(1.697,0.6)(-1.796,-0.12)(0,0.36)(-1.697,-0.6)(-1.44,-1.08)(-0.897,-1.56)(0,-1.8)

\rput(0.9,-0.4){\tiny $\overline{P}$}
\rput(0.86,1.2){\tiny $\overline{Q}$}

%\psdots[linecolor=blue, fillcolor=blue](1.2237,1.32)(-1.59197,0.84)(1.7636,-0.36)(1.59197,-0.84)
%\psdots(1.2237,1.32)(-1.44,1.08)(1.796,0.12)(-1.44,-1.08)

\psline[linestyle=dotted, linewidth=0.4pt](-2,1.8)(2,1.8)
\psline[linestyle=dotted, linewidth=0.4pt](-2,1.56)(2,1.56)
\psline[linestyle=dotted, linewidth=0.4pt](-2,1.32)(2,1.32)
\psline[linestyle=dotted, linewidth=0.4pt](-2,1.08)(2,1.08)
\psline[linestyle=dotted, linewidth=0.4pt](-2,0.84)(2,0.84)
\psline[linestyle=dotted, linewidth=0.4pt](-2,0.6)(2,0.6)
\psline[linestyle=dotted, linewidth=0.4pt](-2,0.36)(2,0.36)
\psline[linestyle=dotted, linewidth=0.4pt](-2,0.12)(2,0.12)
\psline[linestyle=dotted, linewidth=0.4pt](-2,-1.8)(2,-1.8)
\psline[linestyle=dotted, linewidth=0.4pt](-2,-1.56)(2,-1.56)
\psline[linestyle=dotted, linewidth=0.4pt](-2,-1.32)(2,-1.32)
\psline[linestyle=dotted, linewidth=0.4pt](-2,-1.08)(2,-1.08)
\psline[linestyle=dotted, linewidth=0.4pt](-2,-0.84)(2,-0.84)
\psline[linestyle=dotted, linewidth=0.4pt](-2,-0.6)(2,-0.6)
\psline[linestyle=dotted, linewidth=0.4pt](-2,-0.36)(2,-0.36)
\psline[linestyle=dotted, linewidth=0.4pt](-2,-0.12)(2,-0.12)

\rput(2.2, 1.8){\tiny $-8$}
\rput(2.2, 1.56){\tiny $-7$}
\rput(2.2, 1.32){\tiny $-6$}
\rput(2.2, 1.08){\tiny $-5$}
\rput(2.2, 0.84){\tiny $-4$}
\rput(2.2, 0.6){\tiny $-3$}
\rput(2.2, 0.36){\tiny $-2$}
\rput(2.2, 0.12){\tiny $-1$}
\rput(2.2, -0.12){\tiny $1$}
\rput(2.2, -0.36){\tiny $2$}
\rput(2.2, -0.6){\tiny $3$}
\rput(2.2, -0.84){\tiny $4$}
\rput(2.2, -1.08){\tiny $5$}
\rput(2.2, -1.32){\tiny $6$}
\rput(2.2, -1.56){\tiny $7$}
\rput(2.2, -1.8){\tiny $8$}
%\rput(0.82,0.82){\scriptsize $P_1$}
%\rput(0.28,0){\scriptsize $P_2$}
%\rput(-1.2,0){\scriptsize $P_3$}
%\rput(-1.3,-1.85){\scriptsize $P_4$}
%\psline[linecolor=red]{->}(-0.05, 1.53)(-0.6,-1.483)
\end{pspicture}}

\end{tabular}

\caption{Illustration for the third case in the proof of Theorem~\ref{thm_d} in the case where $s=s_0$. The first half of the order on the polygons of $\Read(y)$ (on the left) is given by $\{-5\}\prec\{-3\}\prec Q \prec -P,$ (note that the pair $\{P,-P\}$ is special with $-P\prec P$) while the first half of $\prec$ for $s\Read(y)s$ (on the right) is given by $\{-5\}\prec\{-3\}\prec \overline{Q} \prec -\overline{P}.$
The order is hence the same after overlining.}

\label{fig:d_n_autre_3}

\end{figure}
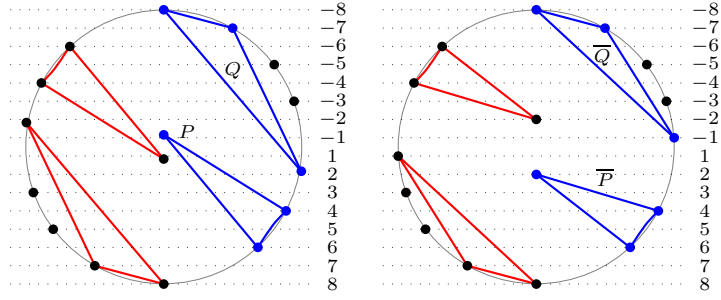	

\end{proof}

The following two technical Lemmatas are type $D_n$ analogues of Lemma~\ref{order_polygons_a}.

\begin{lem}\label{lem:conf}
Let $c$ be a Coxeter element, $i\in\{2, \cdots, n\}$ such that $i\in L_c\cup\{i_1\}$, $i+1\in R_c$. Let $j=c^{-1}(i)$, $k=c^{-1}(i+1)$. Assume that $j\in P_1$, $k\in P_2$, with $P_1\neq P_2$. Then $P_2\prec P_1$.  
\end{lem}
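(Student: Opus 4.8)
The plan is to mirror the proof of Lemma~\ref{order_polygons_a}, importing from the proof of Theorem~\ref{thm_d} the extra case analysis needed to cope with the central point and with symmetric and special polygons. Recall the shape of the type $A_n$ argument: since $j=c^{-1}(i)$ and $k=c^{-1}(i+1)$, there are no points of the circle strictly between $j$ and $i$, nor strictly between $k$ and $i+1$, which pins down which vertices of $P_1$ and $P_2$ can lie above $i$ and which below $i+1$. One first disposes of the situation where some edge or diagonal of $P_1$ is forced to cross a horizontal line strictly at the left of $P_2$ (then $P_2\prec P_1$ straight from the definition of $\prec$), reducing to the case where, in the height order, $P_1$ stays below $i+1$ and $P_2$ stays above $i$. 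In that residual case either $P_1$ and $P_2$ are not fully disjoint, and then the location of $i,i+1$ forces $P_2$ strictly at the right of $P_1$; or they are fully disjoint, and one rules out a chain $Q_1,\dots,Q_m$ with $P_1$ strictly at the left of $Q_1$, each $Q_r$ strictly at the left of $Q_{r+1}$, and $Q_m$ strictly at the left of $P_2$, which is the only way $P_1\prec P_2$ could occur.

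First I would run this reduction in type $D_n$, replacing segments by curvilinear edges and using that the relevant part of $L_c\cup\{i_1\}$ consists of the labels met clockwise between $k$ and $i$, and the relevant part of $R_c$ of those met clockwise between $i+1$ and $j$; one must also treat the boundary case $i=i_1$, in which $i_1=2$ and $j$ is the split central point $-i_1$. Next, whenever $P_1,P_2$ are not fully disjoint, I would check, as in the third case of the proof of Theorem~\ref{thm_d}, that the geometry forces $P_2$ strictly at the right of $P_1$; since in each of the three cases of the construction of $\prec$ a polygon having something strictly at its right is removed strictly later, this gives $P_2\prec P_1$. In the fully disjoint case, all vertices of $P_2$ lie below all vertices of $P_1$, and a short inspection of the possible configurations shows that no left-chain $Q_1,\dots,Q_m$ as above can exist, so again $P_2\prec P_1$.

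The hard part will be the bookkeeping forced by the central point and by symmetric and special polygons: in type $D_n$ the order $\prec$ is not read off directly from relative positions but produced from the auxiliary order $<$ on the pairs $P\cup -P$ by the rule recalled before Theorem~\ref{thm_d}, with a special treatment of the (at most one) special pair and of the (at most one) symmetric polygon, which is forced to be central in $\prec$. I would handle this in the spirit of the case analysis of the proof of Theorem~\ref{thm_d} (cf.\ Figures~\ref{fig:d_n_1}, \ref{fig:d_n_autre}, \ref{fig:d_n_autre_2}, \ref{fig:d_n_autre_3}): verify that a symmetric polygon, being central, cannot play the role of $P_1$ with $P_2$ strictly at its right, and that a special pair $\{P,-P\}$ with $P\prec -P$ cannot be $\{P_1,-P_1\}$ or $\{P_2,-P_2\}$ in a way violating $P_2\prec P_1$ --- using that, by definition, a special pair has nothing between its two members and at most one vertex of $-P$ strictly at the right of $P$, which is incompatible with the configuration reached above. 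These are elementary plane-geometry checks, but enumerating the configurations (which of $\pm P_1$, $\pm P_2$ or a third polygon carries both positive and negative indices, and whether $P_1$ or $P_2$ is symmetric) is where the genuine type-$D_n$ work lies.
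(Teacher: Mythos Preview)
Your plan has the right list of ingredients but assembles them incorrectly, and the central step is actually false as stated. You write that ``in each of the three cases of the construction of $\prec$ a polygon having something strictly at its right is removed strictly later, this gives $P_2\prec P_1$.'' In type $D_n$ this is not how $\prec$ works. The inductive removal procedure defines the order $<$ on \emph{pairs} $P\cup -P$, and in each step only one member of the removed pair is required to have nothing strictly at its right; its partner may well have many polygons strictly at its right. The order $\prec$ on individual polygons is then obtained by \emph{nesting} the pairs. So even if $P_2$ is strictly at the right of $P_1$ and the pair $P_2\cup -P_2$ is removed first in $<$, it can happen that $-P_2$ (not $P_2$) was the rightmost member, giving $-P_2\prec P_2$; then $P_1$, removed later, sits strictly between $-P_2$ and $P_2$ in $\prec$, and you get $P_1\prec P_2$. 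Your geometric reduction therefore does not conclude.

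This is exactly the obstruction the paper's proof is built around. After disposing of the symmetric cases $P_1=-P_1$ and $P_2=-P_2$ directly (these are easy because a symmetric polygon is forced to be central in $\prec$), the paper does a genuine $2\times 2$ case split: which of $P_1\cup -P_1$, $P_2\cup -P_2$ comes first in $<$, and within the earlier pair, which sign is the smaller one. The two ``good'' sign choices give $P_2\prec P_1$ immediately from the nesting; the two ``bad'' ones ($P_1\prec -P_1$ when $P_1\cup -P_1$ is first, or $-P_2\prec P_2$ when $P_2\cup -P_2$ is first) are where the actual work is, and the paper rules them out by separate geometric arguments using the positions of $i,i+1,j,k$ and, crucially, the observation that $\{P_1,-P_1\}$ cannot be special in the relevant configuration. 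Your chain-of-$Q_r$'s device from type $A_n$ does reappear, but only inside one of these subcases, and it controls $<$, not $\prec$. What is missing from your plan is precisely this sign analysis within each pair; without it the argument cannot close.
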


\begin{proof}
If $P_1=-P_1$, then the claim is clear since $P_1$ appears in the middle in the order $\prec$ and the condition $k\in P_2\cap R_c$ forces it to be strictly at the right of the diagonal $(-j)j\in P_1$, hence $P_2 \prec -P_2$. If $P_2=-P_2$, then $P_2$ appears in the middle in $<$ and the condition $j\in P_1\cap L_c$ together with the fact that there are no vertices on the circle between $i+1$ and $k$ forces $-P_1\prec P_1$. So we can assume that $P_i\neq -P_i$ for $i=1,2$.

Note that under these assumptions, $P_2$ is at the right of $P_1$: if the smallest index $\ell$ of $P_1$ is smaller than $k$ then it is clear since the diagonal $\ell k\in P_1$ is strictly at the left of $P_2$, and if not it follows from the fact that there are no indices on the circle between $k$ and $i+1$ as well as between $j$ and $i$. If $P_1\cup-P_1< P_2\cup -P_2$ with $-P_1\prec P_1$ then the result holds by definition of $\prec$. Hence assume that $P_1\cup -P_1 < P_2\cup -P_2$ and $P_1\prec -P_1$. By definition of $\prec$, it means that $P_1$ is at the right of $\{P_1, P_2, -P_2\}$ (note that $\{P_1, -P_1\}$ cannot be special because we would have that $P_1$ contains the miminal index among those of $P_1\cup - P_1$, which would had to be negative because $-j\in -P_1$, and as a consequence $P_1$ would lie strictly at the left of $P_2$). We consider the line segment $L=j\min(P_1)$. If $\ell:=\min(P_1)\in\{i+1, \dots, j-1\}$, then the horizontal line containing the point $\ell$ can intersect no polygon distinct from $P_1$ since $j\in P_1$ and there are no indices between $j$ and $i$ on the circle. Hence $P_1$ is fully disjoint from any polygon with an index smaller than $\ell$, implying that $-P_1$ is fully disjoint from any polygon with an index bigger than $-\ell$, contradicting $P_1\prec - P_1$. Hence we can assume that $\ell:=\min(P_1)\in\{k+1,\dots,i\}\cup\{j\}$. Since $P_1 \prec - P_1$, when removing $P_1\cup -P_1$ in the definition of $<$ we must have a polygon $Q$ which is strictly at the right of $-P_1$ (otherwise $P_1$ and $-P_1$ would be fully disjoint and since $0 \leq j\in P_1$, we would have $-P_1\prec P_1$, a contradiction). This implies that $-Q$ is strictly at the left of $P_1$. But the configuration of the indices is such that either $-Q\subseteq \{ \min(P_1)+1, \dots, i\}\subseteq L_c$, in which case we have $Q\cup -Q< P_1\cup - P_1$ (because $Q$ is in $R_c$ and strictly at the right of $-P_1$), a contradiction, or $\min(Q)> j$, in which case we would again have $Q\cup -Q< P_1\cup - P_1$. Hence we got contradictions in all the cases.

%every polygon $Q$ strictly at the left of $P_1$ has its minimal index bigger than $j$, implying that $-Q$ is strictly at the right of $-P_1$, hence that the pair $Q\cup -Q$ is removed before $P_1\cup-P_1$. If $\min(P_1)\in\{k+1, \dots, i\}$, then every polygon $Q$ strictly left to $P_1$ is either as above, or included in $\{k+1, \dots, i\}$, in which case $-Q$ is also strictly at the right of $-P_1$, hence $Q\cup -Q$ is removed before $P_1\cup -P_1$. It implies that just before removing $P_1\cup -P_1$ when defining $<$, there is no polygon strictly at the right of $-P_1$, hence that $-P_1\prec P_1$ since $P_1\subseteq\{k+1, \dots ,n\}$, a contradiction.  

%, Hence $-P_1$Moreover, since $P_1$ must have all its indices bigger than $k$ (otherwise it cannot be at the right of $P_2$), it follows that $P_1$ cannot be strictly at the right of $-P_2$, hence that there is also a horizontal line separating $-P_2$ and $P_1$. HeIt follows that there either the horizontal line $i$ or $i+1$ separates $P_1$ and $P_2\cup -P_2$. By symmetry it follows that $-P_1$ is at the right of $P_2\cup -P_2$ with $\max{-P_1} < \min{P_2\cup -P_2}$, contradicting $P_1\prec -P_1$. 

We can therefore assume that $P_2\cup -P_2 < P_1\cup -P_1$. If $P_2\prec -P_2$ then the result holds. Assume that $-P_2\prec P_2$. This means that $-P_2$ is at the right of $-P_1$, hence by symmetry that $P_2$ is at the left of $P_1$, which (since $P_2$ is also at the right of $P_1$) implies that $P_1$ and $P_2$ are fully disjoint. It follows that $-P_2$ cannot be strictly at the right to $P_1$, because in that case $-P_2$ would have an index $\ell > k$, hence the diagonal $(-k)\ell\in - P_2$ would be strictly at the left of $k\in P_2$ contradicting $-P_2\prec P_2$. Hence one can assume that $P_1$ is fully disjoint from both $P_2$ and $- P_2$, and therefore by symmetry we have that $-P_1$ is also disjoint from both $P_2$ and $-P_2$. Arguing similarly as in the first case one sees that it contradicts $P_2\cup-P_2< P_1\cup-P_1$.  
\end{proof}

\begin{lem}\label{lem:confbis}
Let $c$ be a Coxeter element. Assume that we are in one of the following situations
\begin{itemize}
\item We have $i_1=2$, $1\in L_c$, $j=c^{-1}(1)$ with $j\in P_1$, $-2\in P_2$ and $P_2\neq P_1$,
\item We have $i_1=2$, $-1\in L_c$, $j=c^{-1}(-1)$ with $j\in P_1$, $-2\in P_2$ and $P_2\neq P_1$, 
\item We have $i_1=1$, $-2\in L_c$, $k=c^{-1}(2)$ with $k\in P_2$, $- 1\in P_1$ and $P_2\neq P_1$,
\item We have $i_1=1$, $-2\in L_c$, $k=c^{-1}(2)$ with $k\in P_2$, $1\in P_1$ and $P_2\neq P_1$.
\end{itemize}
Then $P_2\prec P_1$.
\end{lem}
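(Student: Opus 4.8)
The plan is to prove Lemma~\ref{lem:confbis} by the same kind of case analysis as in the proof of Lemma~\ref{lem:conf}, treating the four listed configurations in parallel since they are related by the symmetries of the type $D_n$ diagram (the automorphism swapping $s_0$ and $s_1$, and the inversion $P \mapsto -P$). First I would record the geometric setup common to all four cases: the crucial point is that by definition of $j$ (resp.\ $k$) there is no vertex of the circle strictly between the relevant middle point $\pm i_1$ and the point $j$ (resp.\ between $2$ and $k$), and that the labeling constraint ($1$ or $-1 \in L_c$, or $-2 \in L_c$) forces the indices lying between the middle and $j$ to be in $L_c$, while those between the middle point and $k$ lie in $R_c$. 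This is the analogue of the picture used for Lemma~\ref{order_polygons_a} and Lemma~\ref{lem:conf}.

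Next I would dispose of the degenerate subcases where $P_1 = -P_1$ or $P_2 = -P_2$, exactly as at the start of the proof of Lemma~\ref{lem:conf}: a symmetric polygon is forced to be the central element for $<$, and the hypothesis that the opposite polygon lies on the appropriate side of the corresponding diagonal pins down the relative order. Then, assuming $P_i \ne -P_i$, I would show first that $P_2$ lies at the right of $P_1$ (using that there are no circle vertices between the middle point and $j$, resp.\ between $2$ and $k$, together with $L_c$/$R_c$ membership), and then rule out $P_1 \prec -P_1$ with $P_1 \cup -P_1 < P_2 \cup -P_2$, and symmetrically $-P_2 \prec P_2$ with $P_2 \cup -P_2 < P_1 \cup -P_1$, by the same ``no polygon can cross the horizontal line through $j$ (resp.\ $k$) after $P_1$ (resp.\ $P_2$) is removed'' argument, and by checking that the pair $\{P_1,-P_1\}$ (resp.\ $\{P_2,-P_2\}$) cannot be special in these configurations because the minimal index of the pair would have the wrong sign. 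What remains is $P_1 \prec -P_1$ or $-P_2\prec P_2$ consistent with the $<$-order, which together with ``$P_2$ at the right of $P_1$'' forces $P_1\prec P_2$, i.e.\ $P_2\prec P_1$, as desired; and the case $-P_1 \prec P_1$ with $P_1 \cup -P_1 < P_2 \cup -P_2$, which immediately gives $P_1\prec -P_1 \preceq P_2$, contradiction-free only if... — here one reads off $P_2 \prec P_1$ directly from the definition of $\prec$.

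I expect the main obstacle to be the bookkeeping around \emph{special pairs}: because $i_1 \in \{1,2\}$ is exactly the regime in which a pair $\{P,-P\}$ can be special (one polygon having a single index in $\{1,2\}$ and all others negative), and the point $j = c^{-1}(\pm i_1)$ or $k = c^{-1}(2)$ sits right next to the middle, one has to argue carefully that in each of the four configurations the relevant pair either is not special, or is special but with the orientation that is still compatible with $P_2 \prec P_1$. Concretely, in the third and fourth configurations ($i_1 = 1$), the polygon $P_1$ containing $-1$ or $1$ is a plausible candidate for a special polygon, and one must check — using the position of $k$ and the $-2 \in L_c$ hypothesis — that whenever $\{P_1,-P_1\}$ is special the induced order $P_1 \prec -P_1$ (or $-P_1 \prec P_1$) still yields $P_2 \prec P_1$. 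I would handle this by appealing to the definition of special pair together with a figure analogous to Figures~\ref{fig:d_n_autre_2} and \ref{fig:d_n_autre_3}, and remark that the remaining verifications are elementary planar geometry, just as the authors do at the analogous points in the proof of Theorem~\ref{thm_d}.

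Finally, I would note that the second, and likewise the first, bullet follow from the third and fourth by applying the diagram automorphism exchanging $s_0$ and $s_1$ (which exchanges the roles of $1$ and $-1$ and of the two balanced-versus-paired conventions), so that in practice only two of the four configurations require a genuinely separate argument. The write-up would therefore prove one representative configuration in full and indicate the symmetry reductions for the others, mirroring the style of the surrounding section.
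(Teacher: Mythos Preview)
Your overall strategy is the same as the paper's: a case analysis on whether $P_1\cup -P_1$ or $P_2\cup -P_2$ comes first in the order $<$, and within each pair which of $\pm P_i$ is smaller for $\prec$, ruling out the bad configurations by the ``$P_2$ is at the right of $P_1$'' observation combined with fully-disjointness arguments and the definition of special pairs. The paper carries out exactly this analysis for the first bullet and declares the others similar.

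There is, however, a genuine error in your symmetry reduction. The diagram automorphism exchanging $s_0$ and $s_1$ acts on the permutation model by negating the index $1$ (conjugation by $(1,-1)\in W_{B_n}$). It therefore swaps bullet~1 with bullet~2 (both have $i_1=2$; the conditions $1\in L_c$ and $-1\in L_c$ are exchanged) and bullet~3 with bullet~4 (both have $i_1=1$; the conditions $-1\in P_1$ and $1\in P_1$ are exchanged). It does \emph{not} relate the $i_1=2$ cases to the $i_1=1$ cases, so your claim that ``the second, and likewise the first, bullet follow from the third and fourth'' is wrong. You can indeed reduce to two representative configurations, but they must be one with $i_1=2$ and one with $i_1=1$; the paper chooses the first bullet as its representative. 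This matters because the geometry is genuinely different: when $i_1=2$ the vertex $-2$ lies at the center and $j$ is on the circle, whereas when $i_1=1$ it is $\pm 1$ that sits at the center and $k$ is on the circle, so the ``no vertices between \ldots'' arguments and the special-pair checks are not identical.

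A minor point: the sentence ``forces $P_1\prec P_2$, i.e.\ $P_2\prec P_1$'' is self-contradictory; presumably you meant the second. When you write this up, be careful to keep the two roles of $P_1,P_2$ (your fixed polygons) separate from the generic labeling $P_1\prec -P_1$ used in the definition of $\prec$.
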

\begin{proof}
The proof is similar to the proof of Lemma~\ref{lem:conf}. We only prove it in the first situation. Note that $P_1\neq - P_1$, otherwise we would have $\pm i_1=\pm 2\in P_1$ hence $P_2=P_1$. If $ P_2\cup - P_2 < P_1\cup - P_1$ with $P_2\prec -P_2$ then the result holds. Hence assume that $P_2\cup - P_2 < P_1\cup - P_1$ with $-P_2 \prec P_2$, and find a contradiction. Note that $P_2\neq - P_2$ since a symmetric polygon is bigger than any other pair in the order $<$. By assumption we have that $-P_2$ is at the right of every polygon in $\{ \pm P_1, P_2\}$ except possibly $P_2$ in the case where $\{ \pm P_2 \}$ is a special pair. In particular $- P_2$ is at the right of $-P_1$, hence since $-j\in R_c$ we have $\min (-P_2) > -j$. We show that $-P_2$ and $-P_1$ are fully disjoint. If this is not the case, then $-P_1$ is strictly at the left of $-P_2$, which implies that $P_1$ is strictly at the right of $P_2$. But this is impossible: the configuration of the indices on the circle would then imply that $-1\in P_1$ (note that no index $<-1$ can be in $P_1$ since $P_1$ is strictly at the right of $P_2$ and $P_1$ does not cross $-P_1$; moreover we cannot have $P_1= -P_1$ since we would have $P_1=P_2$), hence that the segment $j (-1)$ is in $P_1$, contradicting the fact that $-P_2$ is at the right of everything unless $-P_2=P_1$. But in that case the pair $\{ \pm P_2\}$ is special with $P_2 \prec -P_2=P_1$, a contradiction. Hence $-P_2$ and $-P_1$ are fully disjoint, and the same holds for $P_2$ and $P_1$. Note that the configuration of the indices is such that we necessarily have $-P_1  \prec P_1$. We then argue as in the proof of Lemma~\ref{order_polygons_a} in type $A_n$: we choose a sequence of polygons $Q_1, \dots, Q_m$ with $-P_2$ strictly at the right of $Q_1$, ..., $Q_m$ strictly at the right of $-P_1$; such a sequence necessarily exists since $-P_2$ and $-P_1$ are fully disjoint but $P_2\cup - P_2 \prec P_1 \cup - P_1$ (and $- P_i \prec P_i$ for $i=1,2$). But this implies that $\{P_2, -P_2\}$ is special with $P_2\prec - P_2$, a contradiction.  

Now assume that $P_1\cup - P_1 < P_2 \cup - P_2$. If $P_2=-P_2$ then we have $-P_1\prec P_1$ hence we can assume that $P_2\neq - P_2$. If $-P_1\prec P_1$ then there is nothing to prove. Hence assume that $P_1 \prec - P_1$ and find a contradiction. It implies that $P_1$ is at the right of everything (except possibly $-P_1$ in the case where the pair is special). If $P_1 = -P_2$ one sees easily that $-P_1 \prec P_1$ (including in the case where the pair is special since $j, 2\in P_1$). Hence we can assume that $P_1\neq \pm P_2$. We show that $P_1$ and $-P_2$ are fully disjoint. If not, then we necessarily have $\min(P_1)=-1$, otherwise $P_1$ crosses $-P_1$ (and recall that $P_1\neq - P_1$). But in that case we have $P_2=\{-2\}$ since $2,j\in P_1$, which implies that $P_2$ is at the right of all the other polygons among $\{ \pm P_1, - P_2\}$ and strictly at the right of $-P_1$, which is the only one with a smaller minimal index, hence $P_2 \cup - P_2 < P_1\cup - P_1$, a contradiction. Hence we can assume that $P_1$ and $-P_2$ are fully disjoint, hence that $-P_1$ and $P_2$ are also fully disjoint. In particular $P_1$ and $-P_1$ are fully disjoint. If $-P_1$ and $- P_2$ are not fully disjoint, then necessarily $-P_1$ lies at the right of $-P_2$. This contradicts $P_1 \prec -P_1$. Hence we can assume that $-P_1$ and $P_2$ are also fully disjoint. We then have that $-P_1$ is fully disjoint from both $P_2$ and $-P_2$, and the configuration of the indices then again implies that $-P_1\prec P_1$.  

\end{proof}

We now prove Lemma~\ref{lem:casparcasbis} in type $D_n$.

\begin{proof}[Proof of Lemma~\ref{lem:casparcasbis} in type $D_n$]
If $s=s_i$ with $i\geq 2$, then the proof works exactly as in type $A_n$. Hence assume that $s=s_1$. If $i_2=2$ then we have $1\in L_c$ (see Remark~\ref{rmq:initial_d}). We have $x^{-1}sx=(x^{-1}(1), x^{-1}(2))$. Arguing again as in the type $A_n$ proof we see that we have to show that $S_c(y)^{-1}(y(j)) > S_c^{-1}(y(-2))$, where $j$ is the index preceding $1$ on the circle in clockwise order (that is, $j=c^{-1}(1)$). Let $P_1$ be the polygon of $y$ containing $j$ (and hence $y(j)$) and $P_2$ be the polygon containing $-2$ (and hence also $y(-2)$). If $P_1=P_2$ then $j(-2)$ is a diagonal of $P_1$ and we necessarily have $y(j) < y(-2)$, hence $S_c(y)^{-1}(y(j)) > S_c^{-1}(y(-2))$ by definition of $S_c$. Hence assume that $P_1\neq P_2$. To show the claim it suffices to see that $P_2\prec P_1$. This is given by the first point of Lemma~\ref{lem:confbis} above. The other cases are done similarly and lead to all the situations treated in Lemma~\ref{lem:confbis}.   

\end{proof}

\subsection{Type $I_2(m)$}

Let $W=\langle s, t\rangle$ be a finite dihedral group of type $I_2(m)$, $m\geq 3$. Then every element except the longest one has a unique reduced expression. Let $c=st$. The $c$-sortable elements are precisely $e, w_0, t$ and those elements among the remaining ones having their reduced expression beginning by $s$. The map $\Read$ is given by $\Read(e)=e$, $\Read(t)=t$, $\Read(w_0)=st=c$ and 
 $$\Read(\underbrace{st\cdots}_{k~\text{factors}})=
\left\lbrace
\begin{array}{ccc}
\underbrace{st\cdots}_{2k-1~\text{factors}}  & \mbox{if} & 1\leq k\leq \frac{m}{2},\\
\underbrace{ts\cdots}_{2(m-k)+1~\text{factors}} & \mbox{if} & m> k> \frac{m}{2}.
\end{array}\right.$$ 

It follows that $S_c(\underbrace{st\cdots s}_{k~\text{factors}})=\underbrace{st\cdots s}_{(k+1)/2~\text{factors}}$, $1\leq k \leq 2m-3$, $S_c(t)=t$, $S_c(e)=e$, $S_c(st)=w_0$. 

\begin{proof}[Proof of Lemma~\ref{lem:casparcasbis} in type $I_2(m)$]
Let $c=st$. The claim holds if $y=e,t,st$. Hence assume that $y=\underbrace{st\cdots}_{k~\text{factors}}$, $1\leq k\leq 2m-3$. We have $x=\underbrace{st\cdots s}_{k+2~\text{factors}}$, and $x^{-1}sx=\underbrace{st\cdots s}_{2k+3~\text{factors}}$. Assume that $1\leq k\leq m$. In that case, the words $y=\underbrace{st\cdots s}_{k~\text{factors}}$ and $S_c(y)=\underbrace{st\cdots s}_{(k+1)/2~\text{factors}}$ are both reduced. The set of inversions of $y$ is given by $\{s, sts, \dots, \underbrace{st\cdots s}_{2k-1~\text{factors}}\}$ and that of $S_c(y)$ is given by $\{s, sts, \dots, \underbrace{st\cdots s}_{k~\text{factors}}\}$. It follows that if $2k+3< 2m$, then $x^{-1}sx$ is neither an inversion of $y$ nor of $S_c(y)$. If $k=m-1$, then $x^{-1}sx=s$ which is an inversion of both $y$ and $S_c(y)$. If $k=m$, then $x^{-1}sx=sts$ and since $m\geq 3$ it follows that $x^{-1}sx$ is an inversion of both $y$ and $S_c(y)$. Now assume that $m\leq k \leq 2m-3$. Setting $k'=2m-k$ we have $3\leq k'\leq m$ and in that case we get $S_c(y=\underbrace{ts\cdots t}_{k'~\text{factors}})=\underbrace{st\cdots s}_{m-(k'-1)/2~\text{factors}}$ and $x^{-1} sx=\underbrace{st\cdots s}_{2(m-k')+3~\text{factors}}$. Note that we have that $2(m-k')+3 < 2m$ since $3\leq k'$. We have that $x^{-1}sx$ is an inversion of $S_c(y)$ if and only if $2(m-k')+3\leq 2 (m- (k'-1)/2)-1=2m-k'$, which happens if and only if $k'\geq 3$, hence always. Now totally order the reflections of $W$ a follows: $s, sts, ststs, \dots$. The inversions of $y$ are the $k'$ last reflections in this order, while $x^{-1} sx$ appears in position $m-k'+2$. In particular since there are $m$ reflections, we have that $x^{-1} sx$ is always an inversion of $y$ in that case. Hence $x^{-1}sx$ is an inversion of both $y$ and $S_c(y)$ in that case, showing the claim.  

\end{proof}

\subsection{Types $E, F, H$}

Lemma~\ref{lem:casparcasbis} was checked by computer in these types, using the CHEVIE package of GAP 3.  

\section{Questions}

We conclude by giving a list of questions related to this work. 

\begin{question}
Is there a uniform proof of Lemma~\ref{lem:casparcasbis} ? It would give a uniform proof of Theorem~\ref{thm:main}.
\end{question}

\begin{question}
Is there a uniform description of the map $S_c:\NC(W,c)\longrightarrow \Sort_c(W)$ ? 
\end{question}

\begin{question}
Can Theorem~\ref{thm:main} be extended to some Artin-Tits groups of non-spherical type admitting a dual (quasi-) Garside structure ? Such structures exist in types $\widetilde{A}_n$ and $\widetilde{C}_n$ (for suitable choices of Coxeter elements, see~\cite{Dig}, \cite{Dig1}) and for universal Coxeter groups~\cite{Bessis_free}. A theory of $c$-sortable elements for infinite Coxeter groups was developed in~\cite{RS}. Note that an extension to these types would imply that simple dual braids have a positive Kazhdan-Lusztig expansion (Corollary~\ref{dual_kl_positive}), which is not known in these types since it is not known that simple dual braids are Mikado braids (Theorem~\ref{simple_mikado}; note that here, one has to use the definition of Mikado braids for general Artin-Tits groups, which was not given in this paper). 

Indeed, there is no known topological or homological characterization of Mikado braids in types $\widetilde{A}_n$, $\widetilde{C}_n$ or for universal groups, which could be used to prove Theorem~\ref{simple_mikado} (similarly as in either~\cite{DG}, \cite{BG} or \cite{LQ}). 
 
\end{question}

\end{document}